\documentclass{article}
\usepackage{CJKutf8}
\usepackage{definitions}
\newcommand{\tr}{\mathrm{tr}}
\DeclareMathOperator*{\argmin}{arg\,min}

\title{New Understandings and Computation on Augmented Lagrangian Methods for Low-Rank Semidefinite Programming}
\author{Lijun Ding\thanks{University of California San Diego, Department of Mathematics (l2ding@ucsd.edu).} \and Haihao Lu\thanks{MIT, Sloan School of Management (haihao@mit.edu).} \and Jinwen Yang\thanks{University of Chicago, Department of Statistics (jinweny@uchicago.edu).}}
\date{May 21, 2025}

\begin{document}

\maketitle

\begin{abstract}
    Augmented Lagrangian Method (ALM) combined with Burer-Monteiro (BM) factorization, dubbed ALM-BM, offers a powerful approach for solving large-scale low-rank semidefinite programs (SDPs). Despite its empirical success, the theoretical understandings of the resulting non-convex ALM-BM subproblems, particularly concerning their structural properties and efficient subproblem solvability by first-order methods, still remain limited. This work addresses these notable gaps by providing a rigorous theoretical analysis. We demonstrate that, under appropriate regularity of the original SDP, termed as primal simplicity, ALM subproblems inherit crucial properties such as low-rankness and strict complementarity when the dual variable is localized. Furthermore, ALM subproblems are shown to enjoy a quadratic growth condition, building on which we prove that the non-convex ALM-BM subproblems can be solved to global optimality by gradient descent, achieving linear convergence under conditions of local initialization and dual variable proximity. Through illustrative examples, we further establish the necessity of these local assumptions, revealing them as inherent characteristics of the problem structure. Motivated by these theoretical insights, we propose ALORA, a rank-adaptive augmented Lagrangian method that builds upon the ALM-BM framework, which dynamically adjusts the rank using spectral information and explores negative curvature directions to navigate the nonconvex landscape. Exploiting modern GPU computing architectures, ALORA exhibits strong numerical performance, solving SDPs with tens of millions of dimensions in hundreds of seconds.
    
    
\end{abstract}

\section{Introduction}
Semidefinite programming (SDP) is a central class of convex optimization problems in which the decision variable is a symmetric matrix $X\in \real^{n\times n}$, the objective is a linear function of $X$, and the feasible set is defined by linear equality/inequality constraints together with a \emph{positive-semidefinite (PSD)} cone constraint\footnote{We defer the mathematical formulation of SDP, as well as ALM and ALM-BM to be introduced in later paragraphs, to Section \ref{sec: Preliminary_SDP_AL_AL-BM}. Recall a matrix is PSD if it is symmetric with all eigenvalues nonnegative. }~\cite{vandenberghe1996semidefinite,todd2001semidefinite,wolkowicz2012handbook}. 
It generalizes linear programming (LP): if the variable matrix $X$ is restricted to be diagonal, the PSD constraint collapses to simple non-negativity on each diagonal entry, and one recovers an LP.

The PSD constraint enables SDP
to capture a wide range of convex constraints and have remarkable modeling power. Indeed, SDPs have found extensive applications across diverse fields, including control and robotics~\cite{majumdar2020recent,martin2023guarantees}, power systems~\cite{lavaei2011zero,low2014convex1,low2014convex2,madani2014convex}, quantum information~\cite{mazziotti2011large,cavalcanti2016quantum,skrzypczyk2023semidefinite}, and combinatorial optimization~\cite{alizadeh1995interior,helmberg2000semidefinite,goemans2000combinatorial,lovasz2003semidefinite,benson2000solving}, among numerous others. 
Due to its remarkable modeling power and the varying challenges posed by problem size, structure, and application domain, 
a wide range of algorithms have been developed for solving SDP. Classical methods such as interior-point methods (IPM)~\cite{alizadeh1995interior,helmberg1996interior,nesterov1998primal,todd1998nesterov,toh1999sdpt3,sturm2002implementation} offer strong theoretical guarantees and perform well on small to moderate scale problems,
and numerous first-order methods have been explored to address scalability, a key challenge in the modern era \cite{helmberg2000spectral,ding2023revisiting,liao2023overview,o2016conic,o2021operator,garstka2021cosmo,souto2022exploiting}.

In this work, we focus on another classic algorithm for SDP, the augmented Lagrangian method (ALM), originally developed in the early 70s \cite{hestenes1969multiplier,powell1969method,rockafellar1976augmented}. 
In a nutshell, ALM reformulates the linearly constrained SDP by augmenting the standard Lagrangian with a quadratic penalty term that discourages linear constraints violation. Then, at each iteration, ALM approximately solves \emph{an augmented Lagrangian subproblem}, i.e., it approximately minimizes the augmented Lagrangian (with a fixed dual variable) over the primal PSD variable $X$, followed by an explicit dual update based on the current primal residual in terms of the linear constraint. Compared to IPMs and first-order methods, ALM has two major advantages: First, the subproblem, which has no linear constraint and has only a PSD variable $X$, is relatively simple; hence, it allows flexible algorithm designs and avoids dense matrix operations such as those in IPMs. Second, thanks to the quadratic penalty term and its connection to the proximal point method \cite{rockafellar1976augmented}, ALM usually enjoys robust and stable empirical convergence behavior, making it more favorable than some first-order methods. Indeed, ALM is a fertile ground for recent scalable algorithms development for solving SDPs~\cite{burer2003nonlinear,burer2005local,burer2006computational,zhao2010newton,yang2015sdpnal+,sun2020sdpnal+,yurtsever2021scalable}.

Despite its flexibility and robustness, a fundamental challenge in applying ALM to large-scale SDPs is the computational and memory burden associated with high-dimensional matrix variables $X$. Standard methods, such as the projected gradient, typically operate on the full $n\times n$ matrix variable $X$, requiring $O(n^2)$ memory and incurring per-iteration costs that quickly become prohibitive as $n$ grows. 
To address this challenge, a critical structural property has been widely observed across practical applications: the optimal solution to many SDPs is low rank~\cite{barvinok1995problems,pataki1998rank,recht2010guaranteed,chen2015fast,chen2018harnessing,ding2021simplicity}, in the sense that the rank of the optimal solution to the SDPs is much lower than $n$, or even constant as $n$ grows. This insight has motivated the development of low-rank methods that exploit this structure to significantly reduce both computational and memory complexity.

An influential low-rank approach for scalable semidefinite programming is the Burer–Monteiro (BM) factorization~\cite{burer2003nonlinear}, which reparameterizes the PSD matrix variable $X$ as a low-rank matrix product $FF^\top$ for $F\in \real^{n\times k}$. This reformulation reduces the number of variables from $O(n^2)$ to $O(nk)$, enabling scalable algorithms while preserving the expressive power of the original SDP problems for appropriately chosen ranks $r$. Building on this idea, the Augmented Lagrangian Method with Burer-Monteiro (ALM-BM) framework applies BM reformulation on each ALM subproblem, and has become one of the most robust and powerful algorithms for solving large-scale low-rank SDPs~\cite{burer2003nonlinear,burer2005local,burer2006computational,wang2023decomposition,wang2023solving,monteiro2024low}.


However, despite the empirical success and growing interest in the ALM–BM framework, the theoretical understanding of the subproblems arising within this formulation remains notably limited. While substantial progress has been made in analyzing the geometry of the Burer–Monteiro parameterization for the original SDP \cite{boumal2016non,boumal2020deterministic,waldspurger2020rank,o2022burer}, the subproblems encountered in ALM-BM differ significantly: they involve minimizing a nonconvex augmented Lagrangian that depends on both dual variables and penalty parameters, introducing additional complexity. To the best of our knowledge, there has been little systematic study of the structure, regularity, or solvability of these subproblems, leaving notable gaps in the theoretical foundations of the ALM–BM approach.

These gaps motivate several fundamental questions about the nature of ALM-BM subproblems. Although many SDPs admit low-rank solutions and may exhibit a strong form of regularities, it is unclear whether these structures carry over to the ALM subproblems. The presence of penalty terms and dual variables may distort the problem structure, and there is no guarantee of a low-rank solution or regularities across iterations. Understanding this behavior is critical for justifying low-rank parameterizations and the empirical strong performance of ALM in general:
\vspace{0.1cm}
\begin{center}
    \textit{Do the ALM subproblems inherit structural properties of the original SDP, such as regularity and the existence of low-rank solutions?}
\end{center}
\vspace{0.1cm}

Moreover, the nonconvexity introduced by the Burer–Monteiro factorization raises a second key question on the solvability of ALM-BM subproblem by first-order methods. While general nonconvex problems lack global guarantees, recent results suggest that favorable geometry can still lead to fast convergence in structured settings. Whether such geometry exists for ALM-BM subproblems remains an interesting and important question:
\vspace{0.1cm}
\begin{center}
    \textit{Do simple algorithms such as gradient descent have a provably fast convergence to global solutions of nonconvex ALM-BM subproblems?}
\end{center}
\vspace{0.1cm}

In this work, we establish a set of theoretical results that offer new understandings into the structure of ALM subproblems and the solvability when combined with the Burer-Monteiro approach. Specifically, under appropriate regularity conditions, termed as primal simplicity, for the original SDP problem that possesses low-rank solutions, our theoretical contributions are summarized as follows:
\begin{itemize}
    \item We demonstrate, in Section \ref{sec: analysis_AL-P}, that the ALM subproblems inherit the desirable regularity properties of the original SDP, such as strong duality and strict complementarity. Additionally, these subproblems are shown to admit low-rank optimal solutions, mirroring the structure of the original problem, and also enjoy a quadratic growth condition. The growth condition and the preservation of regularity and low-rankness are established under the condition that the dual variable remains close to a regular dual optimal solution of the original SDP.
    \item We provide a refined analysis of ALM-BM subproblems in Section \ref{sec: BM-LM-GD}. Despite their non-convex nature, we prove that these subproblems can be solved to global optimality using the simple gradient descent method at a linear rate, ensuring computational efficiency. These strong convergence guarantees are contingent upon a local initialization of the GD algorithm and, as before, require the dual variable to be within a local region around a dual optimal solution. This result underpins the practical viability of applying gradient-based methods to tackle these non-convex reformulations.
    \item To underscore the tightness and necessity of our theoretical assumptions, we provide a comprehensive range of examples in Section \ref{sec: ALM-nonexample} and \ref{sec: app_ALM_sbprblm_non_exmpl}. These examples serve to illustrate scenarios where the conditions of our theorems are violated, leading to cases where the desirable properties of low-rankness or quadratic growth do not hold, or where the subproblems might exhibit unique high-rank solutions. Through these examples, we argue that the local nature of the aforementioned results concerning both ALM and ALM-BM subproblems is not merely a theoretical limitation but rather an essential feature reflecting the inherent structure and behavior of these optimization problems. This highlights that while powerful, these guarantees are context-dependent, providing a more complete understanding of their applicability.
\end{itemize}

Motivated by our analysis of the regularity of augmented Lagrangian subproblems and the demonstrated effectiveness of first-order methods in the local regime, we design an algorithm and implement a GPU-accelerated low-rank SDP solver, ALORA, that explicitly leverages these favorable properties to achieve both scalability and reliability in practice:
\begin{itemize}
    \item In Section \ref{sec:alora}, we design and implement ALORA (Augmented Lagrangian Optimizer with Rank Adaptation) for solving low-rank SDPs.
    ALORA enhances the classical ALM–BM approach with two key innovations: (1) adaptive rank updates guided by the spectral information of the augmented Lagrangian gradient, and (2) a small auxiliary SDP subproblem to explore along directions of negative curvature. These mechanisms allow ALORA to dynamically adjust model complexity, and improve global convergence in practice.
    
    \item In Section \ref{sec:gpu}, we present a practical, GPU-based implementation of ALORA and demonstrate its scalability through extensive numerical experiments. The solver exploits modern GPU architectures to accelerate core linear algebra operations. As a result, ALORA is capable of solving SDP problems with tens of millions of variables in hundreds of seconds.
\end{itemize}

\textbf{Paper organization.} The remainder of this paper is organized as follows. 
In Section \ref{sec: rl}, we discuss the related work. In Section \ref{sec: Preliminary_SDP_AL_AL-BM}, we provide the mathematical formulations of SDP, ALM, and ALM-BM, and the necessary notations.
Section \ref{sec: regularity} introduces key concepts including duality and primal simplicity, a set of regularities, for both the original SDP \eqref{eq: sdp.p} and its corresponding ALM subproblems \eqref{eq: ALmin}, concluding with a proximity result for the minimizers of ALM subproblems. Section \ref{sec: analysis_AL-P} then delves into the low-rankness and quadratic growth properties of these ALM subproblems, complemented by examples detailed in Section \ref{sec: ALM-nonexample} that demonstrate the necessity of assumptions. In Section \ref{sec:gd}, we first discuss the quadratic growth condition as applied in Section \ref{sec: QGf-g-A}. We then prove the convergence of gradient descent on the Burer-Monteiro approach, establishing its linear convergence for ALM-BM subproblems \eqref{eq: ALmin-BM} under appropriate localness conditions. Furthermore, Section \ref{sec: app_ALM_sbprblm_non_exmpl} specifically addresses the necessity of these localness conditions for linear convergence by providing examples where their violation leads to the failure of the desired properties. Building upon these theoretical understandings, our proposed rank-adaptive augmented Lagrangian optimizer, ALORA, is introduced in Section \ref{sec:alora}. Finally, Section \ref{sec:gpu} discusses the GPU implementation of ALORA and presents numerical experiments on problems such as MaxCut and matrix completion, showcasing its practical performance and scalability.

\subsection{Related literature}
\label{sec: rl}
\textbf{Low-rank structure of SDP.}
A key structural property widely observed in practical semidefinite programs is the existence of low-rank optimal solutions. Empirical studies across applications, ranging from MaxCut to matrix completion and SDPs arising in engineering, signal processing and statistics, consistently show that solutions to large-scale SDPs often exhibit low rank\cite{barvinok1995problems,pataki1998rank,recht2010guaranteed,chen2015fast,chen2018harnessing,ding2021simplicity}. This observation has led to a growing interest in algorithms that exploit this property to improve scalability and efficiency. On the theoretical side, foundational results by Barvinok and Pataki~\cite{barvinok1995problems,pataki1998rank} established that any SDP with $m$ constraints admits an optimal solution with rank $\rstar$ satisfying $\frac{\rstar(\rstar+1)}{2}\leq m$. This rank bound justifies low-rank formulations and motivates the use of structured factorization techniques.

A particularly influential approach is the Burer–Monteiro factorization~\cite{burer2003nonlinear,burer2005local}, which reparameterizes the semidefinite variable $X$ as $X=FF^\top$ for $F\in\mathbb R^{n\times k}$. While this introduces nonconvexity, it has shown that, under mild assumptions and sufficiently large rank estimate $k$ (above the Barvinok–Pataki bound), all local minima of the BM formulation are globally optimal~\cite{burer2005local,journee2010low,boumal2016non}. However, there exist examples showing that spurious local minima can exist when the rank is too small~\cite{waldspurger2020rank}.

\textbf{Theoretical guarantees of general ALM.}
ALM was first introduced in~\cite{hestenes1969multiplier,powell1969method} and later in the seminal work~\cite{rockafellar1976augmented} for convex programming and is not limited to SDP. A strong connection between ALM and the proximal point method (PPM) is established in \cite{rockafellar1976augmented}.
Specifically, the dual iterates generated by ALM coincide with the proximal updates of the dual function in PPM. Under mild assumptions, global convergence of ALM is guaranteed when each subproblem is solved exactly, with convergence rates that are often linear~\cite{rockafellar1973multiplier,rockafellar1976augmented,cui2019r} under certain regular conditions. This includes convergence of both the primal and dual sequences to optimal solutions, along with the asymptotic satisfaction of the KKT conditions. However, in practical large-scale settings, solving subproblems exactly can be computationally prohibitive. To address this, a substantial body of work has studied the behavior of inexact ALM, where subproblems are solved approximately but under controlled error conditions, while still ensuring global convergence~\cite{rockafellar1973dual,rockafellar1976augmented,lan2016iteration,nedelcu2014computational,liu2019nonergodic,xu2021iteration,lu2023iteration,liao2024inexact}. 


\textbf{ALM-BM for solving SDP.} A growing body of research has continued exploring the integration of the BM factorization with the augmented Lagrangian method for solving large-scale SDPs, since the seminal work~\cite{burer2003nonlinear}. In~\cite{burer2003nonlinear}, an augmented Lagrangian algorithm with BM reformulation was proposed, showing promising empirical performance on large-scale instances. The subsequent work~\cite{burer2005local} advanced the theoretical foundation by analyzing the convergence properties of sequences generated by ALM-BM. Notably, it was shown that ALM can converge to globally optimal solutions and produce valid dual certificates under certain assumptions, despite the inherent nonconvexity of the problem. However, a key limitation of this analysis is its reliance on assumptions of ALM iterates that are not guaranteed to hold or verify a priori. More recently, \cite{wang2023decomposition} extended the ALM–BM framework by addressing a broader class of semidefinite programs that involve nonlinear and nonsmooth objective functions, with provable global convergence under certain assumptions, and a Riemannian semismooth Newton method is developed for solving the resulting ALM subproblems on a smooth manifold. In~\cite{wang2023solving}, the ALM–BM subproblem was also formulated as a Riemannian optimization problem and solved using a Riemannian trust-region method. Global convergence was established under assumptions on the ALM iterates, similar to those in~\cite{burer2005local}, which are not guaranteed to hold or be verifiable a priori. Furthermore, a new augmented Lagrangian method was proposed for large-scale SDPs with bounded trace constraints~\cite{monteiro2024low}. Within the ALM–BM framework, a hybrid low-rank method was introduced that solves each ALM subproblem by alternating between an adaptive inexact proximal-point method and Frank–Wolfe steps. This hybrid strategy ensures global convergence while helping escape spurious stationary points that commonly arise in the nonconvex BM formulation.

\textbf{SDP solvers.} Several scalable semidefinite programming solvers have been developed based on the augmented Lagrangian method and/or Burer-Monteiro factorization. Here we provide a brief discussion on a few most relevant solvers:
\begin{itemize}
    \item {SDPLR~\cite{burer2003nonlinear,burer2005local,burer2006computational}.} SDPLR is an augmented Lagrangian solver designed to efficiently handle large-scale semidefinite programs by combining the Burer–Monteiro factorization with several computational enhancements. ALM–BM subproblems are solved using limited-memory BFGS (L-BFGS) methods, with step sizes determined via exact line search. In addition, the solver employs a dynamic rank update strategy based on LU factorizations to adaptively adjust the factorization rank during optimization.
    
    \item {SDPNAL+~\cite{zhao2010newton,yang2015sdpnal+,sun2020sdpnal+}.} SDPNAL+ is an SDP solver that builds on the augmented Lagrangian method and incorporates advanced second-order techniques. It applies a majorized semi-smooth Newton-CG method~\cite{zhao2010newton} to efficiently solve the inner ALM subproblems, enabling rapid convergence even in the presence of degeneracy or ill-conditioning. The solver operates in two phases: an inexact symmetric Gauss-Seidel-based ADMM~\cite{chen2017efficient} phase for warm-starting, followed by a refinement phase with Newton-CG-based ALM. SDPNAL+ has demonstrated strong numerical performance across a wide range of SDP problems.
    
    \item {SketchyCGAL~\cite{yurtsever2021scalable}.} CGAL~\cite{yurtsever2018conditional,yurtsever2019conditional} is a first-order method designed to solve large-scale semidefinite programs within an augmented Lagrangian framework. It combines conditional gradient updates with dual ascent and leverages approximate eigenvector computations to maintain scalability. Building on this, a scalable SDP solver, dubbed SketchyCGAL, further introduces randomized sketching~\cite{tropp2017practical,ding2021optimal} to compress the iterates, enabling near-optimal low-rank approximations with significantly reduced storage and computation. Together, these methods provide a provably convergent and memory-efficient solvers for large-scale SDPs.
    
    \item {cuLoRADS~\cite{han2024accelerating}.} cuLoRADS is a recent GPU-accelerated solver for large-scale semidefinite programs, which combines the Burer–Monteiro factorization with a two-phase approach. The first phase uses an augmented Lagrangian method to solve the low-rank formulation, leveraging its robustness for early-stage optimization and warm-starting. Once sufficient progress is made in reducing primal infeasibility, the algorithm switches to a second phase using an ADMM-based matrix-splitting strategy, dubbed LoRADS~\cite{han2024low}, for faster convergence. By exploiting GPU-parallelizable operations and designing efficient computation and memory patterns, cuLoRADS achieves significant scalability, solving several SDPs with matrix dimensions in the hundreds of millions in a matter of minutes.
\end{itemize}
In addition to ALM-based solvers, there exist many mature SDP solvers based on interior-point methods, including MOSEK~\cite{mosek}, COPT~\cite{copt}, SDPT3~\cite{toh1999sdpt3,tutuncu2003solving}, SeDuMi~\cite{sturm1999using}, CSDP~\cite{borchers1999csdp}, and Clarabel~\cite{goulart2024clarabel,chen2024cuclarabel}. There are also efficient solvers based on operator splitting techniques, such as SCS~\cite{o2016conic,o2021operator}, COSMO~\cite{garstka2021cosmo}, and ProxSDP~\cite{souto2022exploiting}.

\textbf{Other GPU-based solvers.}  Recently, there has been a rapidly growing trend in developing GPU-based solvers for mathematical programming, driven by the substantial computational power and parallelism offered by modern GPUs. Notable examples include solvers based on first-order methods such as \cite{lu2023cupdlp,lu2023cupdlpc,chen2024hpr} for linear programming, \cite{lu2023practical,huang2024restarted} for quadratic programming, \cite{lin2025pdcs} for conic programming, as well as IPM-based solvers such as \cite{chen2024cuclarabel} for conic programming and \cite{shin2024accelerating} for nonlinear programming.

\subsection{Preliminaries on SDP, ALM, and ALM-BM}
\label{sec: Preliminary_SDP_AL_AL-BM}

The standard primal form of an SDP is given by:
\begin{equation}\label{eq: sdp.p}\tag{P}
    \begin{aligned}
        \min_{X\in\symMat{n}} &\quad  \langle C, X\rangle \\ 
        \text{s.t.} & \quad  \mathcal AX=b,\quad X\succeq 0 \ ,
    \end{aligned}
\end{equation}
where the variable $X\in\symMat{n}$, the set of symmetric matrices in $\mathbb{R}^{n\times n}$, and the problem data consist of a linear map $\mathcal A:\mathbb R^{n\times n}\rightarrow \mathbb R^m$, a cost matrix $C\in \mathbb{S}^n$, and a right-hand-side vector $b\in \mathbb{R}^m$. 
The linear map $\Amap$ can be expressed explicitly using $m$ many matrices $A_i\in \symMat{n}$ by 
$[\Amap(X)]_i= \inprd{A_i}{X}$ for $i=1$, $\dots$, $m$ and any $X\in \symMat{n}$. We equip $\mathbb{S}^n$ with the standard trace inner product and $\mathbb{R}^m$ with the standard dot product. Denote the augmented Lagrangian of \eqref{eq: sdp.p} with penalty parameter $\rho>0$ as
\begin{equation*}
    \mathcal L_\rho (X,y) = \inprd{C}{X} + \inprd{y}{b- \mathcal{A}X} + \frac{\rho}{2} \twonorm{\mathcal A X - b}^2,\ X\succeq 0 \ .
\end{equation*}
The ALM subproblems are
\begin{equation}\label{eq: ALmin}\tag{AL-P}
    \text{min}_{X\succeq 0} \quad \mathcal L_\rho (X,y) = \inprd{C}{X} + \inprd{y}{b- \mathcal{A}X} + \frac{\rho}{2} \twonorm{\mathcal A X - b}^2 \ . 
\end{equation}

ALM admits update rules as follows: 
\begin{equation}\label{eq: ALM}\tag{ALM}
    \begin{aligned}
        & X^{t+1}\leftarrow\argmin_{X\succeq0} \mathcal L_\rho (X,y^t)\\
        & y^{t+1}\leftarrow y^t+\rho(b-\mathcal AX^{t+1}) \ .
    \end{aligned}
\end{equation}

This iterative structure enables gradual improvement in both primal feasibility and dual optimality, while allowing inexact subproblem solutions at intermediate steps. 

The Burer-Monteiro approach factor $X\in \mathbb S^n$ as a low-rank product $X=FF^\top$, where $F\in\mathbb R^{n\times k}$ with rank $k$ much smaller than dimension $n$. ALM-BM approach applies BM factorization to ALM subproblems, instead of iteratively solving convex ALM subproblems \eqref{eq: ALmin} in traditional ALM. Specifically, ALM-BM solves the following unconstrained but in general nonconvex subproblems:
\begin{equation}\label{eq: ALmin-BM}\tag{AL-BM}
    \text{min}_{F\in\mathbb R^{n\times k}} \quad \mathcal \Lf_\rho(F,y):\,=L_\rho (FF^\top,y) = \inprd{C}{FF^\top} + \inprd{y}{b- \mathcal{A}(FF^\top)} + \frac{\rho}{2} \twonorm{\mathcal A(FF^\top) - b}^2 \ . 
\end{equation}


\paragraph{Notation} 
We denote the optimal values of 
\eqref{eq: sdp.p} and \eqref{eq: sdp.d} as $\psol$ and $\dsol$ respectively. The dual slack map $Z: \mathbb{R}^m \rightarrow \symMat{n}$ is $Z(y) = C- \Amap^*(y)$.
For a matrix $A$, we denote its 
Frobenius norm, spectral norm, nuclear norm, largest singular value, and smallest nonzero singular value as $\fronorm{A}$, $\opnorm{A}$, $\nucnorm{A}$, $\sigma_{\max}(A)$, and $\sigma_{\min >0}(A)$, respectively. If $A$ is symmetric, we denote $\lambda_{\operatorname{min}}(A)$ the minimum eigenvalue of  $A$. For a linear map $\mathcal{B}:\symMat{d}\rightarrow \mathbb{R}^m$, we denote its smallest singular value as $
\sigma_{\min}(\mathcal{B}) = \min_{S\in \symMat{d},S\not =0} \frac{\twonorm{\mathcal{B}(S)}}{\fronorm{S}}$ and the operator norm as $\opnorm{\mathcal{B}} = \sigma_{\max}(\mathcal{B})= \sup_{X \in \symMat{d},X\not=0} \frac{\twonorm{\mathcal{B} X}}{\fronorm{X}}$. Given a matrix $V\in \mathbb{R}^{d\times r}$ and a linear map $\mathcal{B}:\symMat{d}\rightarrow \mathbb{R}^m$, we define the restricted linear map of $\mathcal{B}$ with respect to $V$ as $\mathcal{B}_{V}: \symMat{r}\rightarrow \RR^{m}$ with 
$\Amap_{V}(S) = \Amap(VSV^\top).$ We shall frequently use the fact taht for $X\succeq 0$, we have $\tr(X) = \nucnorm{X}$. We denote $I_k$ to be the identity matrix in $\real^{k\times k}$.  The notation $\mathbf{1}_s$ is a vector of length $s$ with its entries being all $1$. The notation  $0_{s\times t}$ is a matrix of size $s\times t$ with entries being all $0$. We also denote $0_{s}:= 0_{s\times 1}$. The operator $\diag: \mathbb{R}^n \rightarrow \mathbb{R}^{n\times n}$ puts the vector on the matrix diagonal.

\section{Duality, primal simplicity, and proximity}\label{sec: regularity}
In this section, we present the duality concepts of \eqref{eq: sdp.p} and \eqref{eq: ALmin}. We also introduce the primal simplicity, a set of regularity conditions, for these two problems. These concepts will be central to our main results in the next section. This section concludes with a proximity result of \eqref{eq: ALmin}, showing that 
the minimizer of \eqref{eq: ALmin} is near optimal with respect to \eqref{eq: sdp.p} when the dual vector in \eqref{eq: ALmin} is near optimal.

\subsection{Duality and primal simplicity of \eqref{eq: sdp.p}}
In this subsection, we first introduce the dual problems of \eqref{eq: sdp.p} and the dual Slater's condition. We start with the the dual problems of \eqref{eq: sdp.p}, and then introduce strong duality, strict complementarity, and the primal simplicity.

\paragraph{Dual of \eqref{eq: sdp.p}} The Fenchel dual problem of \eqref{eq: sdp.p} is the following:
\begin{equation}\label{eq: sdp.d}\tag{D}
    \begin{aligned}
        \text{minimize} &\quad  \langle b, y\rangle \\ 
        \text{subject to} & \quad  C- \mathcal A^* y \succeq 0 \ ,
    \end{aligned}
\end{equation}
where the variable $y\in \mathbb R ^m$ and the map $\mathcal A^*$ is the adjoint map of $\mathcal A$ (we equip $\mathbb{R}^n$ with the standard dot product). We denote the dual slack map $Z(y) = C-\Amap^*(y)$. 

Next, to ensure the validity of the duality framework, we introduce the dual Slater's condition. 
\paragraph{Dual Slater's condition} The dual Slater's condition states the following:
\begin{equation}\label{eq: DSlater's}\tag{D-Slater's}
\text{there exists a $y\in\mathbb{R}^m$ such that $Z(y)\succ 0$.}
\end{equation}
Note this covers the situation when $C\succ 0$ or $\Amap (X) = b \implies \tr(X)\leq \alpha$ for some $\alpha>0.$ Two common situations considered in many previous works \cite{ding2021optimal,ding2023revisiting,yurtsever2021scalable,monteiro2024low,helmberg2000spectral}. It is well-known that if the dual Slater's condition holds, then the primal solution to \eqref{eq: sdp.p} exists and the two problems \eqref{eq: sdp.p} and \eqref{eq: sdp.d} match in terms of their optimal values. 

Let us now define strong duality and strict complementarity. 
\begin{mydef}[Strong duality of \eqref{eq: sdp.p} and \eqref{eq: sdp.d}]
The problems \eqref{eq: sdp.p} and \eqref{eq: sdp.d} satisfy strong duality if they admits an optimal primal-dual pair and the following equality holds for any such pair $(\Xsol,\ysol)$:
\begin{equation}\label{eq: sdp.sd}\tag{PD-SD}
    \psol = \inprd{C}{\Xsol}  = \inprd{b}{\ysol}  = \dsol\ .
\end{equation}
We also say \eqref{eq: sdp.p} (or \eqref{eq: sdp.d} resp.) satisfies strong duality if  \eqref{eq: sdp.p} and \eqref{eq: sdp.d} satisfy strong duality.
\end{mydef}
We note that in this paper, the strong duality requires the existence of primal and dual optimal solutions rather than merely the optimal values matching. The existence of primal and dual optimal solutions can be ensured by primal and dual Slater's conditions. 

To define strict complementarity, 
let us first consider the following complementarity, which is well-known to be equivalent to \eqref{eq: sdp.sd}:
\footnote{Indeed, using the linear feasibility $\Amap \Xsol = b$ in the following step $(a)$, we have  
\begin{equation}\label{eq: cs_sd}
0 = \inprd{C}{\Xsol}  - \inprd{b}{\ysol} \overset{(a)}{=}
\inprd{C}{\Xsol}  - \inprd{\Amap \Xsol}{\ysol} = 
\inprd{C}{\Xsol}  - \inprd{\Amap^*\ysol}{\Xsol} = 
\inprd{Z(\ysol)}{\Xsol}\ .
\end{equation}
The complementarity \eqref{eq: sdp.cs} follows from the above by considering that $\Xsol\succeq 0$ and $Z(\ysol)\succeq 0$. The reverse implication is also true by noting 
$
\inprd{C}{\Xsol}  - \inprd{b}{\ysol} = 
\inprd{Z(\ysol)}{\Xsol} =0
$ 
from \eqref{eq: sdp.cs}.}
\begin{equation} \label{eq: sdp.cs}
     Z(\ysol)\Xsol = 0\ . 
\end{equation}
Note that 
the complementarity condition \eqref{eq: sdp.cs} is equivalent to 
\begin{equation}\label{eq: sdp.cs.containment}
\range(\Xsol) \subset \nullspace(Z(\ysol)).
\end{equation}

Strict complementarity strengthens the subset relationship to be an equality relationship:
\begin{equation}\label{eq: sdp.sc.containment}
\range(\Xsol) = \nullspace(Z(\ysol))\ .
\end{equation}
Here, we introduced a version described in \cite{alizadeh1997complementarity}, which is equivalent to \eqref{eq: sdp.sc.containment} by the Rank-Nullity theorem.
\begin{mydef}[Strict complementarity]
The problems \eqref{eq: sdp.p} and \eqref{eq: sdp.d} satisfies strict complementarity if there is a pair of optimal solutions $(\Xsol,\ysol)$ satisfies the following equality in addition to \eqref{eq: sdp.sd}: 
\begin{equation}\label{eq: sdp.sc}\tag{PD-SC}
    \rank(\Xsol) + \rank(Z(\ysol)) = n\ . 
\end{equation}
If a primal optimal $\Xsol$ (or a dual optimal $\ysol$ resp.) satisfies \eqref{eq: sdp.sc}, we say $\Xsol$ (or $\ysol$ resp.) satisfies strict complementarity or it is a strict complementary solution. We also say \eqref{eq: sdp.p} (or \eqref{eq: sdp.d} resp.) satisfies strict complementarity if  \eqref{eq: sdp.p} and \eqref{eq: sdp.d} satisfies strict complementarity.
\end{mydef}

Lastly, let us define the primal simplicity condition introduced in \cite{ding2021simplicity}. It requires strong duality, strict complementarity, and the additional condition that the primal solution is \emph{unique}. As shown in \cite{ding2021simplicity,ding2024sharpnesswellconditioningnonsmoothconvex}, this set of conditions holds in many low-rank SDP applications. 
\begin{mydef}[Primal simplicity]
    The problems \eqref{eq: sdp.p} is primal simple if the following three conditions hold:
    \begin{itemize}
    \item \eqref{eq: sdp.p} satisfies strong duality;
    \item \eqref{eq: sdp.p} satisfies strict complementarity;
    \item \eqref{eq: sdp.p} has a unique primal solution. We denote it as $\Xsol$.
    \end{itemize}
\end{mydef}
As described in \cite{ding2021simplicity}, primal simplicity ensures that \eqref{eq: sdp.p} is robust to optimization errors, i.e., termination error in iterative optimization methods for solving \eqref{eq: sdp.p} which necessarily terminated in finite steps, and measurement errors, error in the problem data $\Amap$, $b$, $C$ of \eqref{eq: sdp.p}. Primal simplicity is also vital in both algorithm design and analysis, see examples in \cite[Section 1]{ding2021simplicity}. In summary, primal simplicity can be considered as a set of regularity conditions ensuring  \eqref{eq: sdp.p} is well-behaved.

\subsection{Duality and primal simplicity of \eqref{eq: ALmin}}

In this subsection, we first introduce the dual of the subproblem \eqref{eq: ALmin}. We then define the strong duality, strict complementarity, and primal simplicity for \eqref{eq: ALmin}. Particularly, we show that the dual Slater's condition of the original SDP guarantees the unique solution and strong duality of \eqref{eq: ALmin}. 


Recall the augmented Lagrangian of \eqref{eq: sdp.p} for any $\rho>0$: given a $\rho>0$, the augmented Lagrangian is a function $\mathcal{L}_\rho: \mathbb{S}^n\times \mathbb{R}^m \rightarrow \mathbb{R}$ with 
\begin{equation}\label{eq: AL}\tag{AL}
    \mathcal L_\rho (X,y) = \inprd{C}{X} + \inprd{y}{b- \mathcal{A}X} + \frac{\rho}{2} \twonorm{\mathcal A X - b}^2\ . 
\end{equation} With the above Lagrangian in mind, we introduce the dual problem of \eqref{eq: ALmin}.

\paragraph{Dual of \eqref{eq: ALmin}} The dual problem of the augmented Lagrangian problem \eqref{eq: ALmin} is 
\begin{equation}\label{eq: ALminD}\tag{AL-D}
    \text{maximize}_{z\in \mathbb{R}^m} \quad \dyp(z) : = \inprd{b}{z} - \frac{1}{2\rho} \twonorm{z-y}^2 \quad \text{subject to} \quad  \;C-\Amap^* z\succeq 0\ .
\end{equation}
Note that \eqref{eq: ALminD} is the proximal problem of \eqref{eq: sdp.d}. Hence, ALM on the primal side is simply the proximal point method for the problem \eqref{eq: sdp.d}, a critical observation used heavily in the literature \cite{rockafellar1976augmented,xu2021iteration,liu2019nonergodic}.

Let us now define strong duality of \eqref{eq: ALmin} and \eqref{eq: ALminD}
\begin{mydef}[Strong duality and KKT of \eqref{eq: ALmin} and \eqref{eq: ALminD}]
The problems \eqref{eq: ALmin} and \eqref{eq: ALminD} satisfies strong duality if there is an optimal primal-dual pair, and for any  optimal primal-dual pair $(\Xyp,\zyp) \in \symMat{n}\times \RR^m$, the following KKT condition holds:
\begin{subequations}\label{eq: ALmin.KKT}
\begin{align}
  \Amap (\Xyp) =\frac{1}{\rho}(y - \zyp) +b & & (\text{first-order condition} ), \label{eq: ALmin.KKT.fo}\\
    Z(\zyp) \Xyp = 0  & & (\text{complementarity}), \label{eq: ALmin.KKT.cs}\\
    \Xyp \succeq 0, Z(\zyp) \succeq 0 & & (\text{primal-dual feasibility}) .\label{eq: ALmin.KKT.pdf}
\end{align}
\end{subequations}
We also say \eqref{eq: ALmin} (or \eqref{eq: ALminD} resp.) satisfies strong duality if  \eqref{eq: ALmin} and \eqref{eq: ALminD}  satisfy strong duality.
\end{mydef}

Thanks to the dual Slater's condition \eqref{eq: DSlater's}, strong duality in the above sense always holds for  \eqref{eq: ALmin} and \eqref{eq: ALminD} as shown by the following proposition. 
\begin{prop}\label{prop: existence_ALmin_P_D_sd}
Suppose the dual Slater's condition \eqref{eq: DSlater's} holds. Then strong duality holds for \eqref{eq: ALmin} and \eqref{eq: ALminD}, and the dual solution is unique.
\end{prop}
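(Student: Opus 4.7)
The approach is to identify \eqref{eq: ALminD} as the Fenchel dual of \eqref{eq: ALmin} and then use \eqref{eq: DSlater's} to establish dual existence and uniqueness, primal existence via coercivity, and finally both strong duality and the KKT system from first-order optimality at a primal minimizer. First I would derive the dual and record weak duality: using the Fenchel identity $\tfrac{\rho}{2}\|u\|^2 = \max_{v} \bigl\{\langle v,u\rangle - \tfrac{1}{2\rho}\|v\|^2\bigr\}$ with $u = \mathcal A X - b$ and the substitution $z = y - v$, the augmented Lagrangian admits the minimax representation
\[
\mathcal L_\rho(X, y) = \max_{z \in \mathbb R^m} \bigl\{\langle C - \mathcal A^* z, X\rangle + \langle b, z\rangle - \tfrac{1}{2\rho}\|z - y\|^2\bigr\}.
\]
Exchanging $\min_{X \succeq 0}$ with $\max_z$ gives weak duality, and noting that the inner minimum over $X \succeq 0$ of $\langle C - \mathcal A^* z, X\rangle$ equals $0$ on $\{C - \mathcal A^* z \succeq 0\}$ and $-\infty$ otherwise reproduces \eqref{eq: ALminD} exactly. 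By \eqref{eq: DSlater's} the feasible set of \eqref{eq: ALminD} is a nonempty closed convex set, and $\dyp$ is strongly concave and coercive, so \eqref{eq: ALminD} has a unique maximizer $\zyp$.

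Next I would produce a minimizer of \eqref{eq: ALmin} by a coercivity argument. Fixing any $\bar z$ with $Z(\bar z) = C - \mathcal A^* \bar z \succ 0$, guaranteed by \eqref{eq: DSlater's}, weak duality gives, for every $X \succeq 0$,
\[
\mathcal L_\rho(X, y) \geq \langle Z(\bar z), X\rangle + \langle b, \bar z\rangle - \tfrac{1}{2\rho}\|\bar z - y\|^2 \geq \lambda_{\min}(Z(\bar z))\,\tr(X) + \text{const}.
\]
Since $\lambda_{\min}(Z(\bar z)) > 0$ and $\tr(X) \geq \fronorm{X}$ on the PSD cone, $\mathcal L_\rho(\cdot, y)$ is smooth and coercive on the closed set $\{X \succeq 0\}$, so a minimizer $\Xyp$ exists.

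Finally, strong duality and the KKT system will fall out of first-order optimality at $\Xyp$. Since $\Xyp$ minimizes the smooth function $\mathcal L_\rho(\cdot, y)$ over the PSD cone, there is a multiplier $Z^\star \in \symMat{n}$ with $Z^\star \succeq 0$, $\langle Z^\star, \Xyp\rangle = 0$, and $C - \mathcal A^* y + \rho\,\mathcal A^*(\mathcal A \Xyp - b) = Z^\star$. Setting $\zyp := y + \rho(b - \mathcal A \Xyp)$, this stationarity identity rewrites as $Z^\star = Z(\zyp)$, which immediately gives \eqref{eq: ALmin.KKT.fo}--\eqref{eq: ALmin.KKT.pdf}. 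A short direct calculation using $\mathcal A \Xyp - b = (y - \zyp)/\rho$ together with $\langle Z(\zyp), \Xyp\rangle = 0$ produces the identity $\mathcal L_\rho(\Xyp, y) = \dyp(\zyp)$, which by weak duality certifies strong duality and forces $\zyp$ to coincide with the unique maximizer found above; uniqueness of the dual component then guarantees that every optimal pair satisfies the KKT system. The only delicate step is the coercivity bound, whose positive coefficient on $\tr(X)$ is exactly what \eqref{eq: DSlater's} supplies; without it the primal infimum may fail to be attained and the argument breaks down.
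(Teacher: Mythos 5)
Your proof is correct, but it is substantially more detailed and self-contained than the paper's. The paper's proof is a short citation-style argument: it observes that the feasible set of \eqref{eq: ALminD} is nonempty (by \eqref{eq: DSlater's}) and the objective $\dyp$ is strongly concave, hence a unique maximizer $\zyp$ exists, and then appeals directly to ``standard Lagrangian duality theory'' (Slater's condition on \eqref{eq: ALminD} plus attainment of its optimum) to obtain strong duality with \eqref{eq: ALmin}. Your route instead derives \eqref{eq: ALminD} from \eqref{eq: ALmin} via the Fenchel identity, establishes primal attainment by a coercivity bound $\mathcal{L}_\rho(X,y)\ge \lambda_{\min}(Z(\bar z))\tr(X) + \mathrm{const}$ (a step the paper suppresses), and then verifies the KKT system and the value identity $\mathcal{L}_\rho(\Xyp,y)=\dyp(\zyp)$ by direct computation, from which strong duality and identification of $\zyp$ with the unique maximizer follow. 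What your approach buys is transparency: it exposes exactly where \eqref{eq: DSlater's} is used (the positive coefficient $\lambda_{\min}(Z(\bar z))$ on $\tr(X)$, which drives coercivity and hence primal existence), and it constructs the dual slack $Z(\zyp)$ explicitly from first-order optimality rather than from an external theorem. The cost is length; the paper's version is a one-liner because it delegates the delicate existence issue to standard convex duality. One small remark on presentation: the inequality you label ``weak duality'' in the coercivity step is really the pointwise bound from the minimax representation evaluated at the fixed point $\bar z$ (retaining the nonnegative cross term $\langle Z(\bar z), X\rangle$ rather than discarding it), which is strictly stronger than what weak duality alone gives; it is correct, just not quite weak duality.
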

\begin{proof}
    Since the objective of \eqref{eq: ALminD} is continuous and admits strong convexity, we know \eqref{eq: ALminD} admits a unique solution $\zyp$. Furthermore, as Slater's condition \eqref{eq: DSlater's} holds for the dual problem \eqref{eq: ALminD} and the dual problem has an optimal solution, we know the strong duality holds for \eqref{eq: ALmin} and \eqref{eq: ALminD} due to standard Lagrangian duality theory.
\end{proof}

Next, we introduce strict complementarity for \eqref{eq: ALmin}.

\begin{mydef}[Strict complementarity of \eqref{eq: ALmin} and \eqref{eq: ALminD}]
The problems \eqref{eq: ALmin} and \eqref{eq: ALminD} satisfies strict complementarity if there is a pair of optimal solutions $(\Xyp,\zyp)$ satisfies the following equality in addition to \eqref{eq: ALmin.KKT}: 
\begin{equation}\label{eq: ALmin.sc}\tag{ALmin-SC}
    \rank(\Xyp) + \rank(Z(\zyp)) = n\ , 
\end{equation}
Or equivalently, 
\begin{equation}\label{eq: ALmin.sc.LA}
    \range(\Xyp) = \nullspace(Z(\zyp))\ .
\end{equation}
If a primal solution $\Xyp$ (or a dual solution $\zyp$ resp.) satisfies \eqref{eq: ALmin.sc}, we say $\Xyp$ (or $\zyp$ resp.) satisfies strict complementarity or it is a strict complementary solution. We also say \eqref{eq: ALmin} (or \eqref{eq: ALminD} resp.) satisfies strict complementarity if  \eqref{eq: ALmin} and \eqref{eq: ALminD} satisfy strict complementarity.
\end{mydef}

Finally, we introduce primal simplicity for \eqref{eq: ALmin}.  
\begin{mydef}[Primal simplicity of \eqref{eq: ALmin}]
    The problem \eqref{eq: ALmin} is primal simple if the following three conditions hold:
    \begin{itemize}
    \item \eqref{eq: ALmin}  satisfies strong duality;
    \item \eqref{eq: ALmin}  satisfies strict complementarity;
    \item \eqref{eq: ALmin}  has a unique primal solution. We denote it as $\Xyp$.
    \end{itemize}
\end{mydef}


\subsection{Proximity of the ALM subproblems}\label{sec: prelim_AL_P}
In this subsection, we demonstrate that under the dual Slater's condition, if the dual variable $y$ is close to the optimal dual solution $y_*$, then any optimal solution pair $(\Xyp, \zyp)$ to the ALM subproblems \eqref{eq: ALmin} and \eqref{eq: ALminD} is not far from being optimal to the original SDP problem \eqref{eq: sdp.p}. In addition, the primal solution $\Xyp$ is well-bounded. Formally, Theorem \ref{thm: ALminKKTExistSol} presents the result:

\begin{thm}\label{thm: ALminKKTExistSol}
    Consider the primal-dual SDP pair \eqref{eq: sdp.p} and \eqref{eq: sdp.d}. Suppose there exists $y_0$ such that the dual Slater's condition \eqref{eq: DSlater's} holds, namely, $Z(y_0)\succ 0$. 
 Then, for any optimal solution pair $(\Xyp,\zyp)$ to the ALM subproblems \eqref{eq: ALmin} and \eqref{eq: ALminD} and any dual optimal solution $\ysol$ to the dual SDP \eqref{eq: sdp.d}, the following bounds on $\inprd{C}{\Xyp} - \psol$ , $\twonorm{\Amap \Xyp - b}$, and $\nucnorm{\Xyp}$ holds
    \begin{subequations}\label{eq: ALmin_sub_lif_bXyp}
    \begin{align}
    \abs{\inprd{C}{\Xyp} - \psol }
     &\leq \left( \frac{1}{\rho}(\twonorm{\ysol} + \twonorm{y-\ysol}) + \twonorm{b}\right)\twonorm{y- \ysol},\label{eq: Xypsuboptyysol}\\
     \twonorm{\Amap \Xyp - b} 
     &\leq \frac{1}{\rho} \twonorm{y - \ysol}, \label{eq: Xyplinearyysol}\\
    \nucnorm{\Xyp}
    &\leq  \frac{(\twonorm{\ysol} + \twonorm{y-\ysol} +\twonorm{y_0})(\twonorm{b} + \frac{1}{\rho}( \twonorm{y-\ysol} + \twonorm{\ysol})) }{\sigma_{\min}(Z(y_0))}.
    \label{eq: Xypnucbound}
    \end{align}
    \end{subequations}
\end{thm}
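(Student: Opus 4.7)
My strategy rests on two ingredients: the KKT conditions \eqref{eq: ALmin.KKT} for the primal-dual pair $(\Xyp,\zyp)$ granted by Proposition~\ref{prop: existence_ALmin_P_D_sd}, and the proximal interpretation of $\zyp$ as the proximal image of $y$ relative to \eqref{eq: sdp.d}. I first establish two nonexpansiveness-type estimates
\[
\twonorm{y-\zyp}\leq\twonorm{y-\ysol}\qquad\text{and}\qquad\twonorm{\zyp-\ysol}\leq\twonorm{y-\ysol}.
\]
For the former, since both $\ysol$ and $\zyp$ are feasible for the original dual, comparing the ALM-dual objective $\dyp$ at $\zyp$ and $\ysol$ gives $\tfrac{1}{2\rho}(\twonorm{\zyp-y}^2-\twonorm{\ysol-y}^2)\leq\inprd{b}{\zyp-\ysol}$; dual optimality of $\ysol$ for \eqref{eq: sdp.d} forces $\inprd{b}{\zyp-\ysol}\leq 0$, and the claim follows. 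The latter estimate is immediate from nonexpansiveness of the proximal operator, of which $\zyp$ is the image of $y$ and $\ysol$ is a fixed point.

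\textbf{Bounds \eqref{eq: Xyplinearyysol} and \eqref{eq: Xypsuboptyysol}.} The first bound drops out instantly: \eqref{eq: ALmin.KKT.fo} rewrites $\Amap\Xyp-b=\tfrac{1}{\rho}(y-\zyp)$, and the first proximal estimate closes the argument. For \eqref{eq: Xypsuboptyysol}, complementarity \eqref{eq: ALmin.KKT.cs} yields $\inprd{C}{\Xyp}=\inprd{\Amap^*\zyp}{\Xyp}=\inprd{\zyp}{\Amap\Xyp}$; substituting \eqref{eq: ALmin.KKT.fo} and using $\psol=\inprd{b}{\ysol}$ (strong duality of \eqref{eq: sdp.p} being ensured by the dual Slater hypothesis) leads to
\[
\inprd{C}{\Xyp}-\psol\;=\;\tfrac{1}{\rho}\inprd{\zyp}{y-\zyp}+\inprd{b}{\zyp-\ysol}.
\]
Cauchy--Schwarz combined with $\twonorm{\zyp}\leq\twonorm{\ysol}+\twonorm{\zyp-\ysol}\leq\twonorm{\ysol}+\twonorm{y-\ysol}$ and the proximal estimates then delivers \eqref{eq: Xypsuboptyysol}.

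\textbf{Bound \eqref{eq: Xypnucbound} and the main obstacle.} The Slater point $y_0$ only enters here. The starting observation is the elementary inequality $\nucnorm{\Xyp}=\tr(\Xyp)\leq\inprd{Z(y_0)}{\Xyp}/\sigma_{\min}(Z(y_0))$, which follows from $Z(y_0)\succeq\sigma_{\min}(Z(y_0))I$. The crucial move, and the only genuinely non-routine step, is to use complementarity to convert a trace inner product of PSD matrices into a scalar inner product:
\[
\inprd{Z(y_0)}{\Xyp}\;=\;\inprd{Z(y_0)-Z(\zyp)}{\Xyp}\;=\;\inprd{\zyp-y_0}{\Amap\Xyp}.
\]
Replacing $\Amap\Xyp$ via \eqref{eq: ALmin.KKT.fo} and combining Cauchy--Schwarz with the triangle-inequality bounds $\twonorm{\zyp-y_0}\leq\twonorm{y_0}+\twonorm{\ysol}+\twonorm{y-\ysol}$ and $\twonorm{\Amap\Xyp}\leq\twonorm{b}+\tfrac{1}{\rho}\twonorm{y-\ysol}$ produces an upper bound slightly tighter than \eqref{eq: Xypnucbound} and hence implies it. Aside from this complementarity trick, the proof is pure KKT and triangle-inequality bookkeeping, so no further difficulty is anticipated.
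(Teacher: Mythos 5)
Your proof is correct and follows the same overall blueprint as the paper: establish the proximal nonexpansiveness estimates $\twonorm{\zyp-y}\le\twonorm{y-\ysol}$ and $\twonorm{\zyp-\ysol}\le\twonorm{y-\ysol}$, then derive the three bounds from the KKT system \eqref{eq: ALmin.KKT}. The arguments for \eqref{eq: Xyplinearyysol} and \eqref{eq: Xypsuboptyysol} are essentially identical to the paper's (though you derive the nonexpansiveness estimates by direct objective comparison and prox nonexpansiveness rather than the paper's strong-convexity three-point inequality — both are standard). Where you genuinely diverge is the nuclear-norm bound \eqref{eq: Xypnucbound}. The paper detours back through the already-proved suboptimality bound \eqref{eq: Xypsuboptyysol} to control $\inprd{C}{\Xyp}$, then writes $\inprd{Z(y_0)}{\Xyp}=\inprd{C}{\Xyp}-\inprd{b}{y_0}-\inprd{y_0}{\Amap\Xyp-b}$ and bounds term by term. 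You instead exploit complementarity directly to write $\inprd{Z(y_0)}{\Xyp}=\inprd{Z(y_0)-Z(\zyp)}{\Xyp}=\inprd{\zyp-y_0}{\Amap\Xyp}$, then apply Cauchy--Schwarz to the two factors. Both approaches hinge on the same inequality $\inprd{Z(y_0)}{\Xyp}\ge\sigma_{\min}(Z(y_0))\nucnorm{\Xyp}$; your route is shorter, bypasses the dependence on \eqref{eq: Xypsuboptyysol}, and as you observe produces a marginally tighter constant (it omits the extra $\tfrac{\twonorm{\ysol}}{\rho}$ in the second factor of \eqref{eq: Xypnucbound}), from which the paper's bound follows a fortiori.
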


To prove Theorem \ref{thm: ALminKKTExistSol}, we first present a lemma regarding the nonexpansiveness of $\zyp$:

\begin{lem}\label{lem: zypyysol}
   Suppose \eqref{eq: ALminD} is feasible, then for any optimal solution $\ysol$ of \eqref{eq: sdp.d}, we have the following inequalities: 
   \begin{subequations}\label{eq: zypyysol}
       \begin{align}
           &\twonorm{\zyp - \ysol} \leq \twonorm{y -\ysol} \label{eq: zypysol}\\ 
           &\twonorm{\zyp - y} \leq \twonorm{y -\ysol}\label{eq: zypy} \ .
       \end{align}
   \end{subequations}
\end{lem}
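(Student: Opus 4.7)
The key observation is that $\zyp$, as defined by \eqref{eq: ALminD}, is precisely the proximal operator $\mathrm{prox}_{\rho g}(y)$ for $g(z) := -\inprd{b}{z} + \delta_K(z)$, where $K := \{z \in \RR^m : C - \Amap^* z \succeq 0\}$ is the dual feasible set; and any optimum $\ysol$ of \eqref{eq: sdp.d} is a minimizer of $g$, hence a fixed point of $\mathrm{prox}_{\rho g}$. Viewed this way, both bounds in \eqref{eq: zypyysol} follow at once from the firm non-expansivity of proximal operators (which also implies non-expansivity of $I - \mathrm{prox}_{\rho g}$). Rather than invoking this general fact, I will give a short self-contained derivation via first-order optimality on the convex set $K$.

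My plan is to (i) write down the two variational inequalities characterizing $\zyp$ and $\ysol$ on $K$, (ii) specialize them to extract a single non-negative inner product, and (iii) finish by a Pythagorean expansion. In detail, since the objective of \eqref{eq: ALminD} is smooth concave with gradient $b - \tfrac{1}{\rho}(z - y)$, optimality of $\zyp$ reads
\begin{equation*}
\inprd{b - \tfrac{1}{\rho}(\zyp - y)}{z - \zyp} \leq 0 \quad \text{for all } z \in K,
\end{equation*}
while optimality of $\ysol$ for the linear objective of \eqref{eq: sdp.d} gives $\inprd{b}{z - \ysol} \leq 0$ for all $z \in K$. Substituting $z = \ysol$ into the first inequality and $z = \zyp$ into the second, then adding, cancels the $b$-terms and yields the key estimate $\inprd{\zyp - y}{\ysol - \zyp} \geq 0$. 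The Pythagorean expansion
\begin{equation*}
\twonorm{\ysol - y}^2 = \twonorm{\ysol - \zyp}^2 + \twonorm{\zyp - y}^2 + 2\,\inprd{\ysol - \zyp}{\zyp - y}
\end{equation*}
together with this non-negativity of the cross term immediately delivers both \eqref{eq: zypysol} and \eqref{eq: zypy}.

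The only non-routine step is the cancellation in (ii) that produces the non-negative inner product $\inprd{\zyp - y}{\ysol - \zyp}$; this is where the structure of both optimality conditions (one constrained on $K$ with a strongly concave quadratic, the other constrained on $K$ with the same linear term) is essential. Once that inner-product sign is in hand, the two norm bounds fall out of a single identity, so no further work is needed.
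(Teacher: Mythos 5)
Your argument is correct and takes essentially the same route as the paper's: both rest on viewing \eqref{eq: ALminD} as the proximal step for the dual, with $\ysol$ a fixed point of that proximal map, and then unpacking firm nonexpansivity. The paper writes the three-point inequality for $\dyp$ at $\zyp$, plugs in $z=\ysol$, and invokes the optimality of $\ysol$ for \eqref{eq: sdp.d} to cancel the $\langle b,\cdot\rangle$ term, arriving at $\twonorm{\ysol-\zyp}^2+\twonorm{\zyp-y}^2\le\twonorm{y-\ysol}^2$; you reach the very same combined inequality by adding the two variational inequalities and applying the Pythagorean identity, which is just an algebraic rearrangement of the same idea.
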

\begin{proof}
From the strong convexity of $\dyp$ (defined in \eqref{eq: ALminD}) and the optimality of $\zyp$, it holds for any $z$ that:
\begin{equation}\label{eq: three_point_AL_dual}
    \dyp(\zyp) + \frac{1}{2\rho} \twonorm{z - \zyp}^2 \leq \dyp(z)\ .  
\end{equation}
Then the inequalities in \eqref{eq: zypyysol} follow from \eqref{eq: three_point_AL_dual} by setting $z = \ysol$ and the optimality of $\ysol$ in \eqref{eq: sdp.d}. 
\end{proof}

\begin{proof}[Proof of Theorem \ref{thm: ALminKKTExistSol}]

Let us first prove the inequality \eqref{eq: Xyplinearyysol}. We note that 
\[
\twonorm{\Amap \Xyp - b} \overset{(a)}{\leq} \frac{1}{\rho} \twonorm{\zyp - y} 
\overset{(b)}{\leq} \frac{1}{\rho} \twonorm{y - \ysol}\ .
\]
Here, the step $(a)$ is because the first-order condition \eqref{eq: ALmin.KKT.fo} for \eqref{eq: ALmin}, and the step $(b)$ is because \eqref{eq: zypy} from Lemma \ref{lem: zypyysol}. Thus, the inequality \eqref{eq: Xyplinearyysol} is proved.

Next, we prove the suboptimality bound \eqref{eq: Xypsuboptyysol}. Because of \eqref{eq: DSlater's}, we know $\psol = \inprd{b}{\ysol}$. Hence, 
\begin{equation*}
\begin{aligned}    
\abs{\inprd{C}{\Xyp} - \inprd{b}{\ysol} }
& \overset{(a)}{=} \abs{ \inprd{Z(\zyp)}{\Xyp} + \inprd{\Amap^*(\zyp)}{\Xyp} - \inprd{b}{\ysol} } \\
& \overset{(b)}{=} \abs{\inprd{\zyp}{\Amap\Xyp-b} + \inprd{b}{\zyp-\ysol}} \\ 
& \leq \twonorm{\zyp} \twonorm{\Amap\Xyp-b} + \twonorm{b}\twonorm{\zyp - \ysol} \\ 
& \overset{(c)}{\leq} \left( \frac{1}{\rho}\twonorm{\zyp} + \twonorm{b}\right)\twonorm{y- \ysol} \\
& \overset{(d)}{\leq} \left( \frac{1}{\rho}(\twonorm{\ysol} + \twonorm{y-\ysol}) + \twonorm{b}\right)\twonorm{y- \ysol} \ ,
\end{aligned} 
\end{equation*}
In the step $(a)$, we add and subtract the term $\inprd{\Amap^*(\zyp)}{\Xyp}$. In the step $(b)$, we use the complementarity \eqref{eq: ALmin.KKT.cs}. In the step $(c)$, we use \eqref{eq: Xyplinearyysol} and \eqref{eq: zypysol} in Lemma \ref{lem: zypyysol}. In the last step $(d)$, we use \eqref{eq: zypysol} in Lemma \ref{lem: zypyysol} again. Thus, the inequality \eqref{eq: Xyplinearyysol} is proved.

To prove \eqref{eq: Xypnucbound}, we first upper bound $\inprd{C}{\Xyp}$ using \eqref{eq: Xypsuboptyysol} and the Cauchy-Schwarz inequality:
\begin{equation}\label{eq: CXypbound}
\begin{aligned}
\inprd{C}{\Xyp} & \leq \left( \frac{1}{\rho}(\twonorm{\ysol} + \twonorm{y-\ysol}) + \twonorm{b}\right)\twonorm{y- \ysol} + \twonorm{b}\twonorm{\ysol} \\ 
& \leq \left( \frac{1}{\rho}(\twonorm{\ysol} + \twonorm{y-\ysol}) + \twonorm{b}\right)(\twonorm{y- \ysol} +\twonorm{\ysol}) \ .
\end{aligned}
\end{equation}
Next, we have 
\begin{equation}\label{eq: inprd_ZzypXyobound}
    \begin{aligned}
& \inprd{C}{\Xyp} - \inprd{b}{y_0}  = 
\inprd{Z(y_0)}{\Xyp} + \inprd{\Amap \Xyp - b}{y_0}\\
\overset{(a)}{\implies} & 
\inprd{Z(y_0)}{\Xyp} \leq \inprd{C}{\Xyp} - \inprd{b}{y_0} + \frac{1}{\rho} \twonorm{y_0}\twonorm{y-\ysol} \\ 
\overset{(b)}{\implies} &\inprd{Z(y_0)}{\Xyp} 
\leq  (\twonorm{\ysol} + \twonorm{y-\ysol} +\twonorm{y_0})(\twonorm{b} + \frac{1}{\rho}( \twonorm{y-\ysol} + \twonorm{\ysol}))\ .\\
    \end{aligned}
\end{equation}
In the step $(a)$, we use Cauchy-Schwarz inequality and \eqref{eq: Xyplinearyysol} to bound $\inprd{\Amap \Xyp - b}{y_0}$. In the step $(b)$, we use \eqref{eq: CXypbound}. Due to $\inprd{Z(y_0)}{\Xyp}\geq \sigma_{\min}(Z(y_0))\nucnorm{\Xyp}$ and \eqref{eq: inprd_ZzypXyobound}, we see \eqref{eq: Xypnucbound}.
 \end{proof}

Lastly, we comment that while there can be multiple optimal solutions to \eqref{eq: ALmin}, their image under $\Amap$ are the same, which will be used in Section \ref{sec: ALM-nonexample}.

\begin{lem}\label{lem: AL_same_image}
Suppose $X_{y,\rho,1}$ and $X_{y,\rho,2}$ are two optimal solutions to \eqref{eq: ALmin} and strong duality holds for \eqref{eq: ALmin}, then it holds that $\Amap(X_{y,\rho,1}) = \Amap(X_{y,\rho,2})$.
\end{lem}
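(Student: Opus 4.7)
The plan is to exploit the fact that the dual problem \eqref{eq: ALminD} is strictly concave in $z$, so its optimal solution is unique, and then read off from the first-order KKT condition that $\mathcal{A}X$ is completely determined by that unique dual optimizer.

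More concretely, I would proceed as follows. First, by the hypothesis that strong duality holds for \eqref{eq: ALmin}, an optimal primal-dual pair exists, so in particular the dual problem \eqref{eq: ALminD} admits at least one maximizer. Second, the dual objective $\dyp(z)=\langle b,z\rangle-\tfrac{1}{2\rho}\|z-y\|^2$ is strictly concave on $\mathbb{R}^m$ (the quadratic penalty has a positive definite Hessian $\tfrac{1}{\rho}I_m$), so any maximizer of \eqref{eq: ALminD} is unique. Call this unique dual optimum $\zyp$. Third, because strong duality holds, both $(X_{y,\rho,1},\zyp)$ and $(X_{y,\rho,2},\zyp)$ form optimal primal-dual pairs, and the first-order KKT condition \eqref{eq: ALmin.KKT.fo} applied to each of them yields
\begin{equation*}
\Amap(X_{y,\rho,1}) \;=\; \tfrac{1}{\rho}(y-\zyp)+b \;=\; \Amap(X_{y,\rho,2}),
\end{equation*}
which is the desired conclusion.

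There is no serious obstacle in this argument; the entire content is packaged in the strict concavity of the proximal-style dual objective. The only thing one should be slightly careful about is making sure we genuinely have the KKT conditions at our disposal for both primal optima (and not only for one privileged pair), but this is exactly what the definition of strong duality for \eqref{eq: ALmin} provides: the KKT system \eqref{eq: ALmin.KKT} holds for any optimal primal-dual pair. So the proof will be short, essentially a one-line consequence of uniqueness of $\zyp$ together with \eqref{eq: ALmin.KKT.fo}.
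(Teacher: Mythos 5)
Your proof is correct and takes essentially the same route as the paper: the paper's (terse) argument also combines uniqueness of the dual optimizer $\zyp$ — a consequence of the strongly concave proximal term in \eqref{eq: ALminD} — with the first-order KKT identity \eqref{eq: ALmin.KKT.fo} applied to each primal optimum. Your remark that strong duality guarantees the KKT system holds for every optimal primal-dual pair, not just one, is exactly the right point to make explicit.
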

\begin{proof}
    This lemma can be obtained by the KKT condition of \eqref{eq: ALmin.KKT.fo} and the uniqueness of $z_{y,\rho}$ by Lemma \ref{eq: zypyysol}.
\end{proof}

\section{Low-rankness and growth of ALM subproblems}\label{sec: analysis_AL-P}
This section details one of our primary contributions: the ALM subproblems \eqref{eq: ALmin} exhibit desirable properties, including low-rank solutions, primal simplicity, and a quadratic growth condition. These properties are shown to hold when the original problem \eqref{eq: sdp.p} is primal simple and admits a low-rank solution, and crucially, when the dual vector $y$ lies within a neighborhood of a strictly complementary dual optimal solution $\ysol$. Subsequently, in Section \ref{sec: ALM-nonexample}, we further provide examples illustrating that this localness assumption on the dual vector $y$ is indeed necessary. Without it, the subproblem \eqref{eq: ALmin} may admit a unique high-rank solution or fail to satisfy the quadratic growth property.

Before presenting our main results, we first introduce some necessary notations for an optimal strictly complementary pair $(\Xsol,\ysol)$ under the primal simplicity of \eqref{eq: sdp.p}. We define the rank of the optimal primal solution rank $\rstar$ as:
\begin{equation}\label{eq: Xsol_Zysol_rank_r_star}
\rstar :\,= \rank(\Xsol) \overset{(a)}{=} \dim(\nullspace(Z(\ysol)))\ ,
\end{equation}
where the step $(a)$ is a direct consequence of strict complementarity.

Let $\Vstar\in \mathbb R^{n\times\rstar}$ be a matrix whose orthonormal columns span the null space of $Z(\ysol)$. Due to the strict complementarity of \eqref{eq: sdp.p}, we know that $\range(\Vstar)=\nullspace(Z(\ysol))=\range(\Xsol)$. Furthermore, for the unique solution $\zyp$ of \eqref{eq: ALminD}, let $V\in \mathbb R^{n\times\rstar}$ denote the matrix whose orthonormal columns are the eigenvectors of $Z(\zyp)$ corresponding to its smallest $\rstar$ eigenvalues.

With these definitions in place, we are now ready to state our main theorem, whose proof can be found in Section \ref{sec: proof_of_main_theorem}.

\begin{thm}\label{thm: ALmin_primal_simple_qg}
Consider the primal-dual SDP pair \eqref{eq: sdp.p} and \eqref{eq: sdp.d}. Suppose \eqref{eq: sdp.p} is primal simple and the dual Slater's condition \eqref{eq: DSlater's} holds.
Let $(\Xsol,\ysol)$ be an optimal strict complementary primal-dual solution pair to \eqref{eq: sdp.p} and \eqref{eq: sdp.d}. Then, there exist a constant  $c>0$, such that for any $y$ with $\twonorm{y - \ysol}\leq c$ and $\rho>0$, that

(1) \eqref{eq: ALmin} is primal simple. Moreover, the following equalities and inequalities also hold for  any primal-dual optimal pair $(\Xyp, \zyp)$ for  \eqref{eq: ALmin} and \eqref{eq: ALminD}:
\begin{equation}\label{eq: XypZyp.rank}
\rank(\Xyp) = \rank(\Xsol)\ , \quad \text{and} \quad \rank(Z(\zyp)) = \rank(Z(\ysol)) \ .
\end{equation}

(2) For any $B>0$, \eqref{eq: ALmin} admits a local quadratic growth inequality for some constant $\gamma>0$: for any $X\succeq 0$ with $\nucnorm{X}\leq B$, 
\begin{equation}\label{eq: ALmin.qg}
    \mathcal{L}_\rho(\Xyp,y) - \mathcal L_\rho(X,y) = 
 \inprd{Z(\zyp)}{X} + \frac{\rho}{2}\twonorm{\Amap X - \Amap \Xyp}^2
    \geq \gamma \fronorm{X - \Xyp}^2\ ,
\end{equation}
where $\gamma$ is independent of $y$ and admits the following lower estimate: 
\begin{equation}\label{eq: ALmin.gamma_formula}
\gamma  \geq \min\left \{ \frac{\lambda_{n-\rstar}(Z(\ysol))}{B\left(4 + 10 \frac{\sigma_{\max}(\Amap)}{\sigma_{\min}(\Amap_{\Vstar})}\right)}, \frac{\rho\sigma_{\min}^2(\Amap_{\Vstar})}{10} \right\}\ .    
\end{equation}
\end{thm}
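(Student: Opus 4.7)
The plan combines the proximity results from Section \ref{sec: prelim_AL_P} with an eigenvalue-perturbation argument for Part (1) and a block-decomposition argument for Part (2). Write $\Xsol=\Vstar S_\star\Vstar^\top$ with $S_\star\in\symMat{\rstar}$ and $S_\star\succ 0$. A preliminary fact crucial to both parts is that primal simplicity forces $\Amap_{\Vstar}\colon \symMat{\rstar}\to \mathbb R^m$ to be injective, i.e., $\sigma_{\min}(\Amap_{\Vstar})>0$: otherwise, any nonzero $S\in\symMat{\rstar}$ in the kernel would yield a second primal optimum $\Xsol+t\Vstar S\Vstar^\top$ for small $|t|$ (PSD since $S_\star\succ 0$, and optimal since $\Vstar^\top C\Vstar=\Vstar^\top\Amap^*\ysol\Vstar$ follows from $\Vstar^\top Z(\ysol)\Vstar=0$), contradicting uniqueness.

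For Part (1), Lemma \ref{lem: zypyysol} gives $\twonorm{\zyp-\ysol}\le\twonorm{y-\ysol}$, so Weyl's inequality applied to $Z(\zyp)-Z(\ysol)=-\Amap^*(\zyp-\ysol)$ ensures that for $c$ small enough the top $n-\rstar$ eigenvalues of $Z(\zyp)$ are at least $\tfrac12\lambda_{n-\rstar}(Z(\ysol))>0$, hence $\dim\nullspace(Z(\zyp))\le\rstar$; Davis--Kahan then makes $\range(V)$ close to $\range(\Vstar)$, giving $\sigma_{\min}(\Amap_V)\ge\tfrac12\sigma_{\min}(\Amap_{\Vstar})>0$. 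Simultaneously, Theorem \ref{thm: ALminKKTExistSol} bounds $\nucnorm{\Xyp}$ uniformly and \eqref{eq: Xypsuboptyysol}--\eqref{eq: Xyplinearyysol} force $\Amap\Xyp\to b$ and $\inprd{C}{\Xyp}\to\psol$; a compactness argument combined with uniqueness of $\Xsol$ then gives $\Xyp\to\Xsol$ as $y\to\ysol$. Lower semicontinuity of rank yields $\rank(\Xyp)\ge\rstar$ for $y$ near $\ysol$, which together with the complementarity inclusion $\range(\Xyp)\subseteq\nullspace(Z(\zyp))$ and the Weyl bound forces $\rank(\Xyp)=\rstar$ and $\dim\nullspace(Z(\zyp))=\rstar$, i.e., \eqref{eq: XypZyp.rank} and strict complementarity. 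Uniqueness of $\Xyp$ follows by writing $\Xyp=V\tilde SV^\top$ with $\tilde S\succ 0$, reducing \eqref{eq: ALmin.KKT.fo} to $\Amap_V(\tilde S)=b+\tfrac{1}{\rho}(y-\zyp)$, and invoking injectivity of $\Amap_V$; together with Proposition \ref{prop: existence_ALmin_P_D_sd} this establishes primal simplicity of \eqref{eq: ALmin}.

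For Part (2), the first equality in \eqref{eq: ALmin.qg} is a Taylor expansion of the convex quadratic $\mathcal L_\rho(\cdot,y)$ about $\Xyp$: the gradient is $Z(\zyp)$ (by direct computation using \eqref{eq: ALmin.KKT.fo}), the Hessian is $\rho\Amap^*\Amap$, and complementarity $\inprd{Z(\zyp)}{\Xyp}=0$ collapses the linear term to $\inprd{Z(\zyp)}{X}$. For the inequality, use the factorization $\Xyp=V\tilde SV^\top$ from Part (1) and expand any feasible $X$ in the basis $[V\ V_\perp]$ into blocks $S_{11},S_{12},S_{22}$, so that $\fronorm{X-\Xyp}^2=\fronorm{S_{11}-\tilde S}^2+2\fronorm{S_{12}}^2+\fronorm{S_{22}}^2$. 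Then (i) $\inprd{Z(\zyp)}{X}\ge\lambda_{n-\rstar}(Z(\zyp))\tr(S_{22})$, and the PSD constraint with $\nucnorm{X}\le B$ gives $\fronorm{S_{22}}^2\le B\tr(S_{22})$ together with the Schur-block bound $\fronorm{S_{12}}^2\le\tr(S_{11})\tr(S_{22})\le B\tr(S_{22})$, so the off-support blocks are controlled by $\inprd{Z(\zyp)}{X}/\lambda_{n-\rstar}(Z(\zyp))$; (ii) for the on-support deviation, the triangle inequality $\twonorm{\Amap_V(S_{11}-\tilde S)}\le\twonorm{\Amap(X-\Xyp)}+\opnorm{\Amap}\bigl(2\fronorm{S_{12}}+\fronorm{S_{22}}\bigr)$ combined with $\sigma_{\min}(\Amap_V)\ge\tfrac12\sigma_{\min}(\Amap_{\Vstar})$ bounds $\fronorm{S_{11}-\tilde S}^2$ by a multiple of $\twonorm{\Amap(X-\Xyp)}^2$ plus an off-support piece absorbed into the first term. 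Balancing these via an AM--GM split produces the claimed lower bound on $\gamma$ as the minimum of a normal-direction piece scaled by $\lambda_{n-\rstar}(Z(\ysol))/B$ (with the $\sigma_{\max}/\sigma_{\min}$ ratio appearing through the cross-term estimate) and a tangent-direction piece scaled by $\rho\sigma_{\min}^2(\Amap_{\Vstar})$.

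The main obstacle is the bookkeeping in Part (2): the two available nonnegative energies $\inprd{Z(\zyp)}{X}$ and $\tfrac{\rho}{2}\twonorm{\Amap(X-\Xyp)}^2$ must jointly dominate all three block deviations, and the cross-terms in the triangle-inequality expansion of $\twonorm{\Amap(X-\Xyp)}$ have to be split so that the final constant factors into the simple min-of-two form in \eqref{eq: ALmin.gamma_formula}, rather than producing products or more complex dependencies on $\rho$, $B$, and the conditioning ratios. Part (1) is mostly perturbation-theoretic and should go through once $c$ is chosen small enough to simultaneously preserve the eigengap of $Z(\ysol)$, the injectivity of $\Amap_V$ at half the reference value, and the limiting rank inequality $X_{y,\rho}\to\Xsol$.
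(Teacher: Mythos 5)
Your proposal is correct and follows the same overall architecture as the paper's proof: nonexpansiveness of the proximal dual update (Lemma~\ref{lem: zypyysol}) plus Weyl's inequality controls the spectrum of $Z(\zyp)$, Davis--Kahan transfers $\sigma_{\min}(\Amap_{\Vstar})>0$ to $\sigma_{\min}(\Amap_V)>0$, the rank-counting argument via complementarity gives strict complementarity and \eqref{eq: XypZyp.rank}, reduction of the first-order condition through $\Amap_V$ gives uniqueness, and a tangent/normal decomposition gives quadratic growth. Two steps are carried out differently. To show $\Xyp$ is near $\Xsol$ (needed for $\rank(\Xyp)\geq\rstar$), you use a compactness-plus-uniqueness argument on the proximity bounds of Theorem~\ref{thm: ALminKKTExistSol}, whereas the paper quotes quadratic growth of the original SDP (Lemma~\ref{lem: sdp.p.qg}) to obtain the explicit rate $\fronorm{\Xyp-\Xsol}\leq\tilde c\sqrt{\twonorm{y-\ysol}}$; your softer version suffices for this theorem, but the explicit rate is what keeps the initialization radius $\hat c$ of Corollary~\ref{cor: LCBM} nondegenerate, so the paper's quantitative version is doing double duty. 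For Part~(2), you inline the $[V\;V_\perp]$ block decomposition directly, controlling the off-support blocks through the Schur-type bounds $\fronorm{S_{12}}^2\leq\tr(S_{11})\tr(S_{22})$ and $\fronorm{S_{22}}\leq\tr(S_{22})$ together with $\inprd{Z(\zyp)}{X}\geq\lambda_{n-\rstar}(Z(\zyp))\tr(S_{22})$, and the on-support block via $\sigma_{\min}(\Amap_V)$; the paper instead invokes a standalone error-bound lemma (Lemma~\ref{lem: kkt_like.qg}) proved through the face-projection device of Lemma~\ref{lem: SpecW}, whose advantage is reusability (it also feeds Lemma~\ref{lem: sdp.p.qg} and Lemma~\ref{lem: f_g_A_C_QG}). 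Both routes encode the same tangent/normal split and produce a constant of the claimed min-of-two form. One bookkeeping note: your naive square-and-add on the cross term yields the ratio $\sigma_{\max}^2(\Amap)/\sigma_{\min}^2(\Amap_{\Vstar})$ rather than the first power displayed in \eqref{eq: ALmin.gamma_formula}; the final displayed line of the paper's own proof of Lemma~\ref{lem: kkt_like.qg} also ends with the squared ratio, so the first-power display there and in the theorem appears to be a typo, and your derivation matches what the paper actually proves.
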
 

Here are a few remarks on our main theorem:
\begin{rem}[Rank stability and consequences]
    Our rank stability result \eqref{eq: XypZyp.rank} ensures that the Burer-Monteiro  \eqref{eq: ALmin-BM} with the parameter rank $k=\rstar$ is a correct approach as a method for solving  \eqref{eq: ALmin}, in the sense that the global minimizer of \eqref{eq: ALmin-BM} with $k=\rstar$ indeed corresponds to the minimizer of \eqref{eq: ALmin}. Therefore, as long as \eqref{eq: sdp.p} admits a low-rank solution, we expect $k$ to stay small when the dual vector $y$ is near optimal. In Appendix \ref{sec: weaker_result_rank}, we present a weaker result that states $\rank(\Xyp) \le \rank(\Xsol)$ under a much weaker condition. 
\end{rem}

\begin{rem}[Primal simplicity and consequences]
The primal simplicity and quadratic growth of \eqref{eq: ALmin} explain partially the empirical success of methods based on ALM. Specifically, since \eqref{eq: ALmin} is primal simple and satisfies quadratic growth, many classical and modern algorithms applied to \eqref{eq: ALmin} have better theoretical guarantees than their worst-case behavior. Hence, their overall performances are enhanced. For example, IPM enjoys superlinear convergence under primal simplicity \cite{luo1998superlinear} rather than the worst-case linear convergence \cite{nesterov2018lectures}. As another example, the projected gradient and its restarted accelerated version for \eqref{eq: ALmin} converge sublinearly in general. However, with primal simplicity and quadratic growth, the method converges linearly \cite{necoara2019linear}. 
\end{rem}

\begin{rem}[Nonvarnishing quadratic growth and consequences]
    The quadratic growth constant, as stated in \eqref{eq: ALmin.gamma_formula} of the theorem, is independent of $y$. Such independence is critical to the algorithmic analysis of the subproblem \eqref{eq: ALmin} and the overall ALM. For example, if this constant diminishes as $y$ approaches $\ysol$, then algorithms for solving \eqref{eq: ALmin} shall utilize more iterations in achieving the same accuracy, increasing the total computation cost of ALM. Indeed, as we shall see in Corollary \ref{cor: LCBM}, the independence ensures that gradient descent for solving the BM formulation \ref{eq: ALmin-BM} has a convergence speed guarantee independent of $y$. 
\end{rem}

\subsection{Proof of Theorem \ref{thm: ALmin_primal_simple_qg}}
\label{sec: proof_of_main_theorem}
In this section, we present the proof of Theorem \ref{thm: ALmin_primal_simple_qg}. We first establish an auxiliary lemma. We then present the main proof. 

\paragraph{An auxiliary lemma} The following lemma summarizes a bunch of useful auxiliary results that will be instrumental in our subsequent analysis, particularly in estimating the rank of 
primal and dual solutions of \eqref{eq: ALmin}.
\begin{lem}\label{lem: auxiliary}
Instate the assumption of Theorem \ref{thm: ALmin_primal_simple_qg}. Then, there exist constants $c>0$ and $\tilde{c}>0$, such that for any $y$ with $\twonorm{y - \ysol}\leq c$,

(1) It holds that
\begin{subequations}
\begin{align} 
\sigma_{n-\rstar}(Z(\zyp))\geq \frac{9}{10} \sigma_{\min>0}(Z(\ysol)) >0\label{eq: Zmin_nonzero_better_bound}\\ 
\frac{1}{2}\sigma_{\min>0}(\Xsol)\leq \sigma_{\min>0}(X_{y,\rho})\leq \sigma_{\max}(X_{y,\rho})\leq \frac{3}{2}\sigma_{\max}(\Xsol)\label{eq: X_y_rho_bound}\\
\fronorm{\Xyp-\Xsol} \leq \tilde{c}\sqrt{\twonorm{y-y_\star}}\ .\label{eq: X_y_rho_Xsol_bound} 
\end{align}
\end{subequations}

(2) In addition, we have the following inequality that characterizes the smallest singular value of $\Amap_V$.
\begin{equation}
\sigma_{\min}(\Amap_{V})\geq \frac{9}{10}\sigma_{\min}(\Amap_{\Vstar})>0 \ ,
\label{eq: A_Vmin}
\end{equation}
which thus implies $\Amap_V$ is injective.
\end{lem}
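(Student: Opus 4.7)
The plan is to combine the nonexpansiveness of $\zyp$ about $\ysol$ (Lemma~\ref{lem: zypyysol}) and the proximity estimates of Theorem~\ref{thm: ALminKKTExistSol} with the spectral gap of $Z(\ysol)$ provided by the strict complementarity of \eqref{eq: sdp.p}. For \eqref{eq: Zmin_nonzero_better_bound}, Lemma~\ref{lem: zypyysol} gives $\twonorm{\zyp-\ysol}\le\twonorm{y-\ysol}\le c$, so $Z(\zyp)-Z(\ysol)=-\Amap^*(\zyp-\ysol)$ has operator norm at most $\opnorm{\Amap}c$. Strict complementarity forces $\rank Z(\ysol)=n-\rstar$, so $\sigma_{n-\rstar}(Z(\ysol))=\sigma_{\min>0}(Z(\ysol))$, and Weyl's inequality for singular values yields the claim for $c$ sufficiently small.

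For the Frobenius bound \eqref{eq: X_y_rho_Xsol_bound}, I would decompose $\Xyp$ with respect to the orthogonal splitting $\RR^n=\range(\Vstar)\oplus\range(\Vstar)^\perp$, writing $P:=\Vstar\Vstar^\top$. Rewriting $\inprd{Z(\ysol)}{\Xyp}=[\inprd{C}{\Xyp}-\psol]-\inprd{\ysol}{\Amap\Xyp-b}$ via strong duality $\psol=\inprd{b}{\ysol}$ and inserting \eqref{eq: Xypsuboptyysol}--\eqref{eq: Xyplinearyysol} yields $\inprd{Z(\ysol)}{\Xyp}=O(\twonorm{y-\ysol})$. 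Because $Z(\ysol)\succeq\sigma_{\min>0}(Z(\ysol))(I-P)$, this gives $\tr((I-P)\Xyp)=O(\twonorm{y-\ysol})$. Combined with the uniform bound on $\opnorm{\Xyp}$ furnished by \eqref{eq: Xypnucbound}, a standard PSD manipulation shows that the three blocks $P\Xyp(I-P)$, $(I-P)\Xyp P$, and $(I-P)\Xyp(I-P)$ are all $O(\sqrt{\twonorm{y-\ysol}})$ in Frobenius norm. For the on-diagonal block, \eqref{eq: Xyplinearyysol} combined with the three off-diagonal bounds produces $\twonorm{\Amap_\Vstar(\Vstar^\top(\Xyp-\Xsol)\Vstar)}=O(\sqrt{\twonorm{y-\ysol}})$, and injectivity of $\Amap_\Vstar$ (established below) transfers the bound to $\fronorm{\Vstar^\top(\Xyp-\Xsol)\Vstar}$. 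Summing the four pieces gives \eqref{eq: X_y_rho_Xsol_bound}, and the singular-value sandwich \eqref{eq: X_y_rho_bound} then follows from Weyl's inequality for $\Xyp-\Xsol$ with $c$ chosen so that $\tilde c\sqrt c<\frac{1}{2}\sigma_{\min>0}(\Xsol)$.

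For \eqref{eq: A_Vmin}, the eigenvalue gap established in \eqref{eq: Zmin_nonzero_better_bound} permits applying the Davis--Kahan $\sin\Theta$ theorem to the $\rstar$-dimensional invariant subspaces of $Z(\zyp)$ and $Z(\ysol)$: there is an orthogonal $Q\in\RR^{\rstar\times\rstar}$ with $\opnorm{V-\Vstar Q}=O(\twonorm{y-\ysol})$. Since $\Amap_V$ and $\Amap_{VQ}$ share singular values and $V\mapsto\Amap_V$ is Lipschitz in the spectral norm with constant $O(\opnorm{\Amap})$, Weyl applied once more yields $|\sigma_{\min}(\Amap_V)-\sigma_{\min}(\Amap_\Vstar)|=O(\twonorm{y-\ysol})$, which implies \eqref{eq: A_Vmin} once $\sigma_{\min}(\Amap_\Vstar)>0$ is known. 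This last fact is a standard consequence of primal simplicity: if $\Amap_\Vstar S=0$ for some nonzero $S\in\symMat{\rstar}$, then $\Xsol+\varepsilon\Vstar S\Vstar^\top$ would be a feasible, PSD, optimal perturbation of $\Xsol$ for small $\varepsilon$ (strict complementarity makes $\Vstar^\top\Xsol\Vstar\succ 0$ and $\langle C,\Vstar S\Vstar^\top\rangle=\langle\ysol,\Amap_\Vstar S\rangle=0$), contradicting uniqueness of $\Xsol$. The main technical obstacle is the block decomposition in the second paragraph: PSD geometry provides only $\sqrt{\cdot}$-type control on the off-range mass of $\Xyp$ through $\inprd{Z(\ysol)}{\Xyp}$, and this rate then propagates to the on-range block via the auxiliary injectivity result, which is precisely why the final bound scales as $\sqrt{\twonorm{y-\ysol}}$ rather than linearly.
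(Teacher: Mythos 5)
Your overall structure (nonexpansiveness of $\zyp$ $\to$ spectral stability of $Z(\zyp)$ via Weyl, proximity estimates from Theorem~\ref{thm: ALminKKTExistSol} $\to$ $\fronorm{\Xyp-\Xsol}$ bound $\to$ Weyl again, then Davis--Kahan and Lipschitz continuity of $V\mapsto\Amap_V$) matches the paper's proof, and your estimates go through. The genuine difference is in how \eqref{eq: X_y_rho_Xsol_bound} is obtained and how $\sigma_{\min}(\Amap_{\Vstar})>0$ is established. The paper treats these as a package: it invokes Lemma~\ref{lem: sdp.p.qg}, a quadratic growth inequality
\[
\gamma_0\,\fronorm{X-\Xsol}^2 \;\le\; \bigl|\inprd{C}{X}-\psol\bigr| + \twonorm{\Amap X - b}
\]
valid for all $X\succeq 0$ with $\nucnorm{X}\le B$, and simply inserts \eqref{eq: Xyp_LIF_SP} and \eqref{eq: nucXyp}; the injectivity $\sigma_{\min}(\Amap_{\Vstar})>0$ is read off from Lemma~\ref{lem: kkt_like.qg}, whose proof contains exactly your uniqueness-contradiction argument. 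You instead unpack the same reasoning by hand: the three-block PSD decomposition with $P=\Vstar\Vstar^\top$, the $\mathrm{tr}((I-P)\Xyp)=O(\twonorm{y-\ysol})$ estimate from $Z(\ysol)\succeq\sigma_{\min>0}(Z(\ysol))(I-P)$, the Cauchy--Schwarz/Schur-complement bound on the off-diagonal blocks, and then injectivity of $\Amap_{\Vstar}$ to control the range--range block. This is correct, and it makes visible where the $\sqrt{\cdot}$ rate originates (the off-range mass is only controlled quadratically). The trade-off is that the paper's modular route reuses Lemma~\ref{lem: kkt_like.qg} both here and for the quadratic growth in Theorem~\ref{thm: ALmin_primal_simple_qg}(2), so it only proves the block-decomposition argument once, whereas your hands-on version is more self-contained but would have to be repeated for the later quadratic growth. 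One small expositional nit: in justifying $\langle C,\Vstar S\Vstar^\top\rangle=\langle\ysol,\Amap_{\Vstar}S\rangle$, you should say explicitly that $Z(\ysol)\Vstar=0$ (so that the $Z(\ysol)$ piece of $C=Z(\ysol)+\Amap^*\ysol$ vanishes); this is immediate from the definition of $\Vstar$, but as written the step looks like a bare assertion.
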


\begin{proof} 
We prove \eqref{eq: Zmin_nonzero_better_bound} first. From Proposition \ref{prop: existence_ALmin_P_D_sd}, we know \eqref{eq: ALmin} and \eqref{eq: ALminD} satisfy strong duality and indeed have optimal primal solutions and a unique dual solution, respectively. Using \eqref{eq: zypysol} in Lemma \ref{lem: zypyysol} in the following step $(a)$, we have that
\begin{equation}
\fronorm{Z(\zyp) - Z(\ysol)} \leq \opnorm{\Amap}\twonorm{\zyp - \ysol} \overset{(a)}{\leq} \opnorm{\Amap} \twonorm{y - \ysol}\leq \frac{1}{3}\sigma_{\min >0} (Z(\ysol))\ .
\end{equation}
Hence, for all $y$ close enough to $\ysol$ (i.e., the constant $c$ small enough), combining \eqref{eq: Zmin_nonzero} with Weyl's inequality, we see \eqref{eq: Zmin_nonzero_better_bound} holds.

Next, we prove  \eqref{eq: X_y_rho_bound}, and \eqref{eq: X_y_rho_Xsol_bound}. Recall that we have the following inequalities for the linear infeasibility and sub-optimality of $\Xyp$ with respect to \eqref{eq: sdp.p}  from Theorem \ref{thm: ALminKKTExistSol}:
\begin{equation}\label{eq: Xyp_LIF_SP}
\begin{aligned}
    \twonorm{\Amap \Xyp - b}
    &\leq \frac{1}{\rho} \twonorm{y - \ysol}&& (\text{linear infeasibility})\\
 \abs{\inprd{C}{\Xyp} - \psol }
     &\leq \left( \frac{1}{\rho}(\twonorm{\ysol} + \twonorm{y-\ysol}) + \twonorm{b}\right)\twonorm{y- \ysol} \ . && (\text{suboptimality})
\end{aligned}
\end{equation}
Hence, we see that the linear infeasibility and suboptimality are bounded by a multiple of $\twonorm{y-\ysol}$ and they will be small if $\twonorm{y-\ysol}$ is small. Also recall the following bound on the nuclear norm of $\Xyp$ from Theorem \ref{thm: ALminKKTExistSol}: 
\begin{equation}\label{eq: nucXyp}
    \nucnorm{\Xyp}
    \leq  \frac{(\twonorm{\ysol} + \twonorm{y-\ysol} +\twonorm{y_0})(\twonorm{b} + \frac{1}{\rho}( \twonorm{y-\ysol} + \twonorm{\ysol})) }{\sigma_{\min}(Z(y_0))}\ .
\end{equation}
where $y_0$ is a point satisfying the dual Slater's condition \eqref{eq: DSlater's}. Thus, $\Xyp$ is bounded if $\twonorm{y-\ysol}$ is bounded. Using Lemma \ref{lem: sdp.p.qg} in Appendix \ref{sec:lem}, the quadratic growth for \eqref{eq: sdp.p}, shows that for any $B>0$, there is a $\gamma_0>0$ such that for any $X\succeq 0$ with $\nucnorm{X}\leq B$, we have 
\begin{equation}
   \gamma_0  \fronorm{X- \Xsol}^2 \leq \abs{\inprd{C}{X} - \psol} + \twonorm{\Amap X - b}\ .
\end{equation}
Combining the above with \eqref{eq: Xyp_LIF_SP} and \eqref{eq: nucXyp}, we see that there are constants $c,\;\tilde{c}>0$, such that for any $y$ with $\twonorm{y-\ysol}\leq c$, we have 
\begin{equation}\label{eq: XXsoldiffbound}
    \fronorm{\Xyp-\Xsol} \leq \tilde{c}\sqrt{\twonorm{y-y_\star}}\leq     \frac{1}{2}\sigma_{\min>0} (\Xsol)\ .
\end{equation}
Thus, \eqref{eq: X_y_rho_Xsol_bound} is proven. Combining the above inequality \eqref{eq: XXsoldiffbound} with Weyl's inequality, we see \eqref{eq: X_y_rho_bound} holds.  

Lastly we shall prove \eqref{eq: A_Vmin}, which asserts that $\Amap_{V}$ is injective.
Recall $\rstar = \rank(\Xsol)$ and the definition of $V\in \real^{n\times \rstar}$, a matrix
consists of orthonormal eigenvectors of $Z(\zyp)$ that correspond to the smallest $\rstar$ eigenvalues of $Z(\zyp)$. Note that for any orthonormal $O\in \RR^{\rstar}$, there holds the equality
\[
 \sigma_{\min}(\Amap_{V}) = \sigma_{\min}(\Amap_{VO})\ .
\]
Let $\bar{O} \in \arg\min_{OO^\top = I_{\rstar}} \fronorm{\Vstar - VO}$. In the following, we use $V$ rather than $V\bar{O}$ to save some notations.

To get the bound \eqref{eq: A_Vmin}, the main argument is that using $\sigma_{\min}(\Amap_{\Vstar})>0$, thanks to Lemma \ref{lem: kkt_like.qg}, and show $\Amap_V$ is close to $\Amap_{\Vstar}$ if $y$ is close to $\ysol$.
Indeed, we have the following derivation for any $S\in \symMat{\rstar}$ with $\fronorm{S}=1$:
\begin{equation}\label{eq: AVAVstar}
    \begin{aligned}
\twonorm{(\Amap_{\Vstar}- \Amap_{V})(S)} & = 
\twonorm{(\Amap(\Vstar S\Vstar^\top - VSV^\top)} \\ 
& \overset{(a)}{=}\twonorm{\Amap((\Vstar -V)S\Vstar^\top + VS(V-\Vstar)^\top)} \\
& \leq \opnorm{\Amap}\left( \fronorm{(\Vstar -V)S\Vstar^\top} + \fronorm{VS(V-\Vstar)^\top} \right)\\
& \overset{(b)}{\leq} 2\opnorm{\Amap}\fronorm{V-\Vstar}\ .
    \end{aligned}
\end{equation}
In the step $(a)$, we add and subtract $VS\Vstar^\top$. In the step $(b)$, we use the fact that $\Vstar$ and $V$ have orthonormal columns and $\fronorm{S}=1$. To bound the distance $\fronorm{V-\Vstar}$, we use the Davis-Kahan Theorem \cite[Theorem 2]{yu2015useful} in the following step $(a)$ and \eqref{eq: zypysol} in the step $(b)$: 
\begin{equation}\label{eq: distVVstar}
    \fronorm{V-\Vstar} \overset{(a)}{\leq} 2\frac{\fronorm{Z(\ysol)-Z(\zyp)}}{\sigma_{\min>0}(Z(\ysol))} \leq  \frac{2\opnorm{\Amap}\twonorm{\ysol -\zyp} }{\sigma_{\min>0}(Z(\ysol))} 
    \overset{(b)}{\leq} 
    \frac{2\opnorm{\Amap}\twonorm{y-\ysol}}{\sigma_{\min>0}(Z(\ysol))}\ . 
\end{equation}
Thus, combining \eqref{eq: AVAVstar}, \eqref{eq: distVVstar}, and Weyl's inequality, for all $y$ with small enough $\twonorm{y-\ysol}$, we have  $\sigma_{\min}(\Amap_{V})>\frac{9}{10}\sigma_{\min}(\Amap_{\Vstar})$. Consequently, $\Amap_V$ is injective. 
\end{proof}

\paragraph{Main proof of Theorem \ref{thm: ALmin_primal_simple_qg}} We are now ready to prove Theorem \ref{thm: ALmin_primal_simple_qg}. 
\begin{proof}[Proof of Theorem \ref{thm: ALmin_primal_simple_qg}]
Recall Proposition \ref{prop: existence_ALmin_P_D_sd} ensures the strong duality and the existence of primal and dual optimal solutions $\Xyp$ and $\zyp$ for \eqref{eq: ALmin}. 
Also recall that primal simplicity consists of strong duality, strict complementarity, and primal uniqueness. 
We prove the rest of the theorem statement in the following order: (i) strict complementarity of \eqref{eq: ALmin} and Equation \eqref{eq: XypZyp.rank}.
(ii) uniqueness of $\Xyp$, and (iii) quadratic growth of \eqref{eq: ALmin}. In the following, we will choose $c$ small enough so that Lemma \ref{thm: primal_low_rank} and Lemma \ref{lem: auxiliary} can always be applied.

\underline{\textit{Strict complementarity of \eqref{eq: ALmin} and \eqref{eq: XypZyp.rank}.}} 
Thanks to \eqref{eq: Zmin_nonzero_better_bound} and \eqref{eq: X_y_rho_bound} in Lemma \ref{lem: auxiliary}, we have the following rank bound due to Weyl's inequality:
\begin{equation}\label{eq: rankXypSC}
\rank(\Xyp) \geq  \rank(\Xsol)\quad \text{and}\quad 
\rank(Z(\zyp))\geq \rank(Z(\ysol))\ .
\end{equation}
With \eqref{eq: rankXypSC}, we have the strict complementarity for $\Xyp$ and $\zyp$:
\begin{equation}\label{eq: strict_Xyp_Zyp}
    n \overset{(a)}{\geq} \rank(\Xyp) + \rank(Z(\zyp)) 
    \overset{(b)}{\geq }\rank(\Xsol) + \rank(Z(\ysol)) \overset{(c)}{=}n\ . 
\end{equation}
Here, the step $(a)$ is because the sum of the ranks of $\Xyp$ and $Z(\zyp)$ is no more than $n$ (thanks to the complementarity \eqref{eq: ALmin.KKT.cs} of \eqref{eq: ALmin}). The step $(b)$ is because of \eqref{eq: rankXypSC} while the last step $(c)$ is because of the strict complementarity for $\Xsol$ and $Z(\ysol)$. 
Since the left-hand-side and right-hand-side coincides in \eqref{eq: strict_Xyp_Zyp}, it implies all the inner inequalities in \eqref{eq: strict_Xyp_Zyp} are indeed equalities. In particular, we have that 
\begin{equation}\label{eq: XypZyp_rank_equal_Xsol_ysol}
\rank(\Xyp) + \rank(Z(\zyp)) =\rank(\Xsol) + \rank(Z(\ysol))\ .
\end{equation} 
Combining \eqref{eq: XypZyp_rank_equal_Xsol_ysol} with \eqref{eq: rankXypSC}, we see the equalities in \eqref{eq: XypZyp.rank} also hold. 

\underline{\textit{Uniqueness of $\Xyp$.}} 
To prove the uniqueness of $\Xyp$, we first show that 
\begin{equation}\label{eq: rangeXypV}
\range(\Xyp) \overset{(a)}{=} \nullspace(\Zyp) 
\overset{(b)}{=} \range(V) \ . 
\end{equation}
The step $(a)$ is due to the strict complementarity of $\Xyp$ and $\zyp$ we proved. To show the equality $(b)$, recall that $\Zyp\succeq 0$ from \eqref{eq: ALmin.KKT.pdf}. We also have 
\begin{equation}\label{eq: Zyp_nullspace_equal_range_V}
    \dim(\nullspace{\Zyp}) = 
n - \rank(Z(\zyp)) \overset{(i)}{=} n - \rank(Z(\ysol)) \overset{(ii)}{=} \rank(\Xsol) = \rstar\ ,
\end{equation} 
where the step $(i)$ is due to \eqref{eq: XypZyp.rank} and the step $(ii)$ is because of \eqref{eq: sdp.sc} of the original SDP. Hence, the nullspace of $Z(\zyp)$ is the eigenspace of $Z(\zyp)$ corresponding to the $\rstar$ smallest eigenvalues. By the definition of $V$, and $\range(V) = \range(VO)$ for any orthonormal $O\in \real^{\rstar \times \rstar}$, we see the step $(b)$ in \eqref{eq: rangeXypV}.

From \eqref{eq: rangeXypV}, we know for any two solutions $\Xyp$, $\Xyp'$ of \eqref{eq: ALmin}, we have $\Xyp = VSV^\top$ and $VS'V^\top$ for some $S,S'\in \symMat{\rstar}$. From the first order condition \eqref{eq: ALmin.KKT.fo} of \eqref{eq: ALmin}, we know \begin{equation}\label{eq: AL.AS}
    \Amap_{V}(S) = b + (y - \zyp)\quad   \text{and}\quad \Amap_{V}(S') = b + (y - \zyp)\ .
\end{equation}
Because the linear map $\Amap_{V}$ is injective via the second part of Lemma \ref{lem: auxiliary}, we have $S=S'$. Hence, we have $\Xyp = \Xyp'$ and the optimal solution to \eqref{eq: ALmin} is unique.

\underline{\textit{Quadratic growth of \eqref{eq: ALmin}.}} Note that 
\begin{equation}\label{eq: AmapXbAmapXypb}
\begin{aligned}
& \twonorm{\Amap X - b}^2 - \twonorm{\Amap \Xyp -b}^2\\
= & 
\twonorm{\Amap X - \Amap \Xyp}^2 + 2\inprd{\Amap^*\Amap \Xyp}{X} -
2\inprd{\Amap^*\Amap \Xyp}{\Xyp}- 2\inprd{b}{\Amap X -\Amap \Xyp} \\ 
= & \twonorm{\Amap X - \Amap \Xyp}^2 + 
2\inprd{\Amap^*(\Amap \Xyp-b)}{X-\Xyp}\ .
\end{aligned}
\end{equation}
Let us simplify the difference $\mathcal{L}_\rho (X,y) - \mathcal{L}_\rho(\Xyp,y)$: 
\begin{equation}
\begin{aligned}\label{eq: Al_diff}
\mathcal{L}_\rho (X,y) - \mathcal{L}_\rho(\Xyp,y) 
& = \inprd{C}{X-\Xyp} + \inprd{y}{\Amap( X-\Xyp)} + 
\frac{\rho}{2}\left(\twonorm{\Amap X-b}^2 - \twonorm{\Amap \Xyp -b}^2\right) \\ 
& \overset{(a)}{= } 
 \inprd{C-\Amap^* (y + \rho(b - \Amap \Xyp)}{X- \Xyp} + \frac{\rho}{2}\twonorm{\Amap X - \Amap \Xyp}^2\\
 &  \overset{(b)}{= } 
 \inprd{Z(\zyp)}{X} + \frac{\rho}{2}\twonorm{\Amap X - \Amap \Xyp}^2 \ ,
\end{aligned}
\end{equation}
where the step $(a)$ is due to \eqref{eq: AmapXbAmapXypb} and the step $(b)$ is due to the KKT condition $\zyp = y+ \rho(b-\Amap \Xyp)$ and $Z(\zyp)\Xyp =0$ in \eqref{eq: ALmin.KKT} for \eqref{eq: ALmin}.
Thus, we see that $X$ is optimal to \eqref{eq: ALmin} if and only if it satisfies the following system:
\begin{equation}\label{eq: ZypXAxXsucceq0}
\inprd{Z(\zyp)}{X},\quad \Amap X = \Amap \Xyp,\quad \text{and}\quad X\succeq 0.
\end{equation}
Recall we have just shown that $\Xyp$ is a unique optimal solution. Hence, it is the unique solution to the system \eqref{eq: ZypXAxXsucceq0}. Since $\range(V) = \range(\Xyp)$ as shown in \eqref{eq: rangeXypV}. Thus, from Lemma \ref{lem: kkt_like.qg} in Appendix \ref{sec:lem}, for any $X\succeq 0$, we have 
\begin{equation}
\begin{aligned}\label{eq: XXyp.kkt.qg}
    \fronorm{X-\Xyp}^{2}\;
    &\leq\tr(X)
    \left(4+8\frac{\sigma_{\max}(\Amap)}{\sigma_{\min}(\Amap_{V})}\right)\frac{\inprd{Z(\zyp)}{X}}{\lambda_{n-\rstar}(Z(\zyp))}
		+\frac{4}{\sigma_{\min}^{2}(\Amap_{V})}\twonorm{\Amap(X)-\Amap (\Xyp)}^{2} \\ 
    &\overset{(a)}{\leq} 
    \tr(X)
    \left(4+10\frac{\sigma_{\max}(\Amap)}{\sigma_{\min}(\Amap_{\Vstar})}\right)\frac{\inprd{Z(\zyp)}{X}}{\lambda_{n-\rstar}(Z(\ysol))}
		+\frac{5}{\sigma_{\min}^{2}(\Amap_{\Vstar})}\twonorm{\Amap(X)-\Amap (\Xyp)}^{2}.
\end{aligned}
\end{equation}
Here, in the step $(a)$, we use \eqref{eq: Zmin_nonzero_better_bound} and \eqref{eq: A_Vmin} for $\sigma_{\min>0}(Z(\zyp))$ and $\sigma_{\min}(\Amap_{\Vstar})$ if $y$ close enough to $\ysol$. Combining \eqref{eq: Al_diff} and \eqref{eq: XXyp.kkt.qg}, we see the quadratic growth inequality \eqref{eq: ALmin.qg} holds for \eqref{eq: ALmin}. 
\end{proof}

\subsection{Sensitivity of Assumptions in Theorem \ref{thm: ALmin_primal_simple_qg}}
\label{sec: ALM-nonexample}

As demonstrated in Theorem \ref{thm: ALmin_primal_simple_qg}, the desirable properties of \eqref{eq: ALmin} (namely, the existence of a low-rank strictly complementary optimal solution and satisfaction of a quadratic growth condition with a non-vanishing constant) critically depend on the dual vector $y$ being sufficiently close to an optimal strictly complementary solution $\ysol$ of \eqref{eq: sdp.d}. In this section, we emphasize the necessity of this localness assumption by presenting examples where its violation leads to the failure of these desired properties.

Prior to presenting these examples, we first establish a general bound on the solution rank. The following proposition ensures that the augmented Lagrangian problem always admits a solution with a rank of at most $\sqrt{2m}$, provided the dual Slater's condition \eqref{eq: DSlater's} holds.

\begin{prop}[Barvinok-Pataki bound of \eqref{eq: ALmin}]
    Consider the primal-dual SDP pair \eqref{eq: sdp.p} and \eqref{eq: sdp.d}. Suppose the dual Slater's condition \eqref{eq: DSlater's} holds. Then it holds for any $y$ in \eqref{eq: ALmin} that there is an optimal solution $X_{y,\rho}$ such that its rank $r_{y,\rho} = \rank(X_{y,\rho})$ satisfies the Barvinok-Pataki bound: $\frac{r_{y,\rho} (r_{y,\rho}+1)}{2} \leq m$. 
\end{prop}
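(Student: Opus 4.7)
The plan is to reduce the augmented Lagrangian subproblem to a standard SDP with $m$ equality constraints, and then invoke the classical Barvinok--Pataki rank bound on that auxiliary SDP.

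First, I would note that by Proposition \ref{prop: existence_ALmin_P_D_sd}, the dual Slater's condition guarantees that \eqref{eq: ALmin} admits at least one optimal solution; fix any such solution $X_{y,\rho}^{0}$ and set $\hat b := \Amap(X_{y,\rho}^{0})$. By Lemma \ref{lem: AL_same_image}, every optimal solution of \eqref{eq: ALmin} has the same image under $\Amap$, namely $\hat b$. On the affine slice $\{X \succeq 0 : \Amap X = \hat b\}$, the terms $\inprd{y}{b-\Amap X}$ and $\frac{\rho}{2}\twonorm{\Amap X - b}^{2}$ in the augmented Lagrangian are constant (equal to $\inprd{y}{b-\hat b}$ and $\frac{\rho}{2}\twonorm{\hat b - b}^2$ respectively), so $\Lcal_\rho(\cdot,y)$ coincides with $\inprd{C}{\cdot}$ plus a constant there.

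Next, I would consider the auxiliary standard SDP
\begin{equation*}
    \min_{X \succeq 0}\ \inprd{C}{X}\quad \text{subject to}\quad \Amap X = \hat b.
\end{equation*}
This problem has exactly $m$ linear equality constraints, is feasible (witnessed by $X_{y,\rho}^{0}$), and is bounded below (otherwise \eqref{eq: ALmin} would be unbounded, contradicting the existence of $X_{y,\rho}^{0}$). A short argument, which I would spell out, shows that any optimal solution $X$ of this auxiliary SDP is also optimal for \eqref{eq: ALmin}: for any $X' \succeq 0$, one has $\Lcal_\rho(X,y) = \inprd{C}{X} + \inprd{y}{b-\hat b} + \frac{\rho}{2}\twonorm{\hat b - b}^2 \leq \inprd{C}{X_{y,\rho}^{0}} + \inprd{y}{b-\hat b} + \frac{\rho}{2}\twonorm{\hat b - b}^2 = \Lcal_\rho(X_{y,\rho}^{0},y) \leq \Lcal_\rho(X',y)$, using optimality of $X$ for the auxiliary SDP and optimality of $X_{y,\rho}^{0}$ for \eqref{eq: ALmin}.

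Finally, I would invoke the classical Barvinok--Pataki theorem on the auxiliary SDP to produce an extreme point optimal solution $X_{y,\rho}$ whose rank $r_{y,\rho}$ satisfies $\frac{r_{y,\rho}(r_{y,\rho}+1)}{2} \leq m$. By the reduction above, this $X_{y,\rho}$ is also optimal for \eqref{eq: ALmin}, completing the proof. The only substantive point is the reduction step verifying that optima of the auxiliary SDP are optima of \eqref{eq: ALmin} (relying on Lemma \ref{lem: AL_same_image}); the rest is a direct citation of the classical extreme-point / rank-reduction argument applied to an SDP with $m$ equality constraints.
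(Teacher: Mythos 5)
Your proposal is correct and follows essentially the same route as the paper: both reduce \eqref{eq: ALmin} to an auxiliary standard SDP over the affine slice $\{X\succeq 0:\Amap X=\Amap X_{y,\rho}\}$ (which has $m$ equality constraints) and then invoke the classical Barvinok--Pataki extreme-point bound. The only cosmetic difference is the choice of auxiliary objective — the paper uses $\inprd{Z(z_{y,\rho})}{X}$ via the KKT conditions while you use $\inprd{C}{X}$ directly — but on the affine slice these differ by the constant $\inprd{z_{y,\rho}}{\Amap X_{y,\rho}}$, so the optimal sets coincide and the argument is the same.
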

\begin{proof}
    By Proposition \ref{prop: existence_ALmin_P_D_sd}, we know the KKT condition \eqref{eq: ALmin.KKT} holds for any optimal primal-dual pair ($X_{y,\rho},z_{y,\rho})$, and such a pair does exist. In particular, any point $X_{y,\rho}$ is optimal to \eqref{eq: ALmin} if and only if it is optimal to the following problem: 
    \begin{equation}\label{eq: reformulatedALmin}
    \begin{aligned}
        \text{minimize} &\quad  \langle Z(z_{y,\rho}), X\rangle \\ 
        \text{subject to} & \quad  \mathcal AX=b+ \frac{1}{\rho}(y-z_{y,\rho})\ ,\quad X\succeq 0\ .
    \end{aligned}
\end{equation}
Since \eqref{eq: reformulatedALmin} has $m$ many constraints and admits a primal optimal solution, from \cite{barvinok1995problems,pataki1998rank} and more concretely \cite[Theorem 2.1]{lemon2016low}, we know there is an optimal solution to \eqref{eq: reformulatedALmin}, which is also optimal to \eqref{eq: ALmin} and has rank $\frac{r_{y,\rho} (r_{y,\rho}+1)}{2} \leq m$.
\end{proof}
With this Barvinok-Pataki bound in mind, we now present a primal simple SDP \eqref{eq: sdp.p} that possesses a rank-one solution. However, for a specific feasible dual vector $y$, the augmented Lagrangian subproblem \eqref{eq: ALmin} will be shown to yield a unique primal solution whose rank precisely matches this Barvinok-Pataki bound.

\begin{prop}[Necessity of localness for low-rankness]\label{exam: high_rank_ALM_nonlocal_y}
For any $m\geq 3$ and $n\geq 2$ with $m\leq \frac{n(n+1)}{2}$, there exist $C$, $\Amap$, $b$, such that \eqref{eq: sdp.p} has a unique rank one optimal solution and satisfied strict complementarity. Moroever, the corresponding \eqref{eq: ALmin} with $\rho=1$ and a feasible $y$ admits a \emph{unique} solution $X_{y,\rho}$ whose rank $r$ satisfies $r = \max\left\{l\mid \frac{l(l+1)}{2} \leq m\right\}$. 
\end{prop}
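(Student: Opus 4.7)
The plan is to prove this by explicit construction. Set $r := \max\{l : l(l+1)/2 \leq m\}$, so $m_1 := r(r+1)/2 \leq m$. I will partition $\mathbb R^n$ into a rank-$r$ subspace $W = \range([e_{n-r+1}, \ldots, e_n])$ (hosting the ALM solution at the chosen $y$) and its orthogonal complement $W^\perp$ (hosting the original rank-one primal solution $X_* = E_{11}$). In the base case $m = m_1 + 1$, take the first $m_1$ constraints $A_1, \ldots, A_{m_1}$ to be the canonical orthonormal basis of $\{A \in \symMat{n} : \range(A) \subseteq W\}$, set $A_{m_1+1} = E_{11}$, $b = (0, \ldots, 0, 1)$, and $C = I_n - E_{11}$; the general case $m \in [m_1 + 1, n(n+1)/2]$ is handled by padding with $m - m_1 - 1$ redundant copies of the constraint $X_{11} = 1$.

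Primal simplicity follows by direct checking. The first $m_1$ constraints combined with $X \succeq 0$ force the $W$-block of $X$ to vanish, confining $X$ to be supported on $W^\perp$ (via the facial structure of the PSD cone); the final constraint $X_{11} = 1$ combined with $C$ driving $X_{ii}$ to zero for $2 \leq i \leq n-r$ then forces $X = E_{11}$, rank one and unique. The dual optimum $y_* = 0$ yields $Z(y_*) = C$ of rank $n-1$ with nullspace $\range(X_*)$, giving strict complementarity; dual Slater's is witnessed by $y = -e_m$ with $Z(-e_m) = I \succ 0$.

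For the main claim, pick $y = (1 + a_1, \ldots, 1 + a_r, 0, \ldots, 0, -M) \in \mathbb R^m$ with $a_1, \ldots, a_r > 0$ and $M > 1$; here the first $r$ coordinates (under our chosen orthonormal basis on the $W$-block) correspond to the diagonal entries of the $W$-block of $\mathcal{A}^* y$, with off-diagonals set to zero. Since both $C$ and $\mathcal{A}^* z$ are block-diagonal with respect to $W^\perp \oplus W$, the ALM dual problem \eqref{eq: ALminD} decouples: the scalar problem in $z_m$ yields $z_m = -M + 1 < 0$, while the matrix projection of $\diag(1 + a_1, \ldots, 1 + a_r)$ onto $\{S \in \symMat{r} : S \preceq I_r\}$ yields $I_r$. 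Consequently $Z(z_{y,\rho}) = \diag(M - 1, 1, \ldots, 1, 0, \ldots, 0)$ has rank exactly $n - r$ with nullspace $W$. KKT complementarity then forces $X_{y,\rho}$ to be supported on $W$, and the first-order equation $\mathcal{A}X_{y,\rho} = b + (y - z_{y,\rho})/\rho$ becomes $\mathcal{A}_V(S) = (a_1, \ldots, a_r, 0, \ldots, 0)$ in the $W$-block coordinates (with $V = [e_{n-r+1}, \ldots, e_n]$), uniquely determining $S = \diag(a_1, \ldots, a_r) \succ 0$ and thus $X_{y,\rho}$ of rank exactly $r$.

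The main technical obstacle is verifying the ALM dual decoupling and checking that the projection lands on the particular face of the dual feasible cone where $Z$ vanishes entirely on $W$, yielding a strictly positive definite $S$ rather than a rank-deficient one; both follow from the block-diagonal structure and the standard formula for PSD-order projection, which sends $\diag(1 + a_i)$ to $I_r$ exactly when all $a_i > 0$. A minor point of interpretation: the constructed $y$ is not dual feasible for the original SDP --- indeed $Z(y) = \diag(M, -a_1, \ldots, -a_r)$ has negative diagonal entries --- so ``feasible $y$'' in the statement is read as any $y \in \mathbb R^m$ for which the ALM subproblem is well posed, which is automatic.
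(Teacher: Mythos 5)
Your construction has two genuine gaps, both tied to the disjoint-subspace design choice of placing $X_\star = E_{11}$ in $W^\perp$ rather than inside the $W$-block.

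\textbf{Gap 1: the chosen $y$ is not dual feasible.} You acknowledge that $Z(y) = \diag(M, 1,\dots,1,-a_1,\dots,-a_r) \not\succeq 0$ and then propose to reinterpret ``feasible $y$'' as ``any $y$ for which \eqref{eq: ALmin} is well posed.'' That reinterpretation weakens the statement in a way the paper does not intend: the paper's own construction explicitly verifies $Z(y) \succeq 0$, and the proposition's force lies precisely in the fact that the high-rank phenomenon persists even for dual-feasible $y$ (i.e., $y$ is not in some obviously pathological region). With your $y$, a reader could dismiss the example as relying on a wildly infeasible dual vector. To fix this you would need $Z(y)\succeq 0$, but your mechanism for getting $Z(z_{y,\rho})$ to vanish on $W$ relies on having $Y = \diag(1+a_1,\dots,1+a_r) \succ I_r$ so that the projection onto $\{S \preceq I_r\}$ saturates at $I_r$, and that forces $Z(y)|_W = -\diag(a_1,\dots,a_r) \prec 0$. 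A different mechanism is needed; the paper instead engineers $C$ and $\Amap$ so that a dual-feasible $y$ already produces a $z_{y,\rho}$ with $\rank(Z(z_{y,\rho})) = n-r$.

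\textbf{Gap 2: the case $m = r(r+1)/2$ is not covered.} Your base case uses $m_1 + 1 = r(r+1)/2 + 1$ constraints (the $m_1$ spanning the $W$-block plus one pinning constraint $A_{m_1+1} = E_{11}$), and then pads upward. But the proposition must also hold when $m = r(r+1)/2$ exactly (a triangular number). In particular $m=3,\ n=2$ falls in this class, and there $r = n = 2$ so $W^\perp = \{0\}$ and $X_\star = E_{11}$ has nowhere to live. More generally, whenever $m$ is triangular your construction needs one constraint too many. The paper avoids this by using exactly $m_1$ constraints and letting $X_\star = e_1 e_1^\top$ sit \emph{inside} the top-left $r\times r$ block: one of the $m_1$ constraints is $\langle e_1 e_1^\top, X\rangle = 1$ and another is $\langle \diag(I_r,0),X\rangle = 1$, which together with the remaining constraints pin $X$'s top-left block to $E_{11}$, while $C = \diag(0_r, \mathbf 1_{n-r})$ and positive semidefiniteness force the rest of $X$ to vanish. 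This lets the same $r(r+1)/2$ constraints do double duty and leaves $n-r \geq 0$ free coordinates for the cost matrix to penalize, rather than requiring $n-r\geq 1$ as your construction does.

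Your ALM dual decoupling calculation and the projection-onto-$\{S\preceq I_r\}$ argument are correct as far as they go, and the block-diagonal framing is a clean way to see the rank jump; but as written the proof neither establishes dual feasibility of $y$ nor covers all $(m,n)$ in the statement.
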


\begin{proof}
  Let us first consider the following SDP for the simple case: $n=2$ and $m=3$.
\begin{equation}\label{eq: nonlocaly_SDP_simple}
    \begin{aligned}
        \text{minimize} &\quad 0\\ 
        \text{subject to} & \quad  X_{11} =1, \quad X_{12} = 0, \quad  X_{11}+X_{22} = 1\\ 
        & \quad X\succeq 0\ .
    \end{aligned}
\end{equation}
For \eqref{eq: nonlocaly_SDP_simple}, it can be verified that $\ysol = [\frac{1}{2},0,-\frac{1}{2}]^\top$ and  $\Xsol = \begin{bmatrix}
    1 & 0 \\ 
    0 & 0
\end{bmatrix}$ are dual and primal optimal. Strict complementarity can be verified for $Z(\ysol)$ and $\Xsol$. Uniqueness for $\Xsol$ follows directly from the structure of the constraints. Moreover,  for \eqref{eq: ALmin} with  $\rho=1$ and a feasible $y = [-\frac{3}{4}, 0, -\frac{1}{2}]^\top$ (as $Z(y) = \begin{bmatrix}
    \frac{5}{4} & 0 \\ 
    0 & \frac{1}{2}
\end{bmatrix}\succeq 0$), we see  $X_{y,\rho} = \begin{bmatrix}
    \frac{1}{4} & 0 \\
    0 & \frac{1}{4}
\end{bmatrix}$ is a unique optimal solution to \eqref{eq: ALmin} and has rank $2$ matching the Barvinok-Pataki bound. The uniqueness of $X_{y,\rho}$ follows from Lemma \ref{lem: AL_same_image}.

Generalizing the previous example to arbitrary $n>0$ and $m=\frac{r(r+1)}{2}$ for any integer $r\in [1,n]$ (We shall deal with the general case $m$ later.), we set \eqref{eq: sdp.p} with the following choice $C \in \symMat{n}$, $\Amap:\symMat{n}\rightarrow \RR^{\frac{r(r+1)}{2}}$,\footnote{Here, we index the set $\{1,\dots, \frac{r(r+1)}{2}\}$ using two indices $i$ and $j$ with $1\leq i \leq j\leq r$. For example, $(1,1)$ corresponds to $1$ and $(1,2)$ corresponds to $2$, while $(r,r)$ corresponds to $\frac{r(r+1)}{2}$. In general, the index $(i,j)$ corresponds to $\frac{(2r-i+2)(i-1)}{2} + j-i+1$.} and $b\in \RR^{\frac{r(r+1)}{2}}$:
\begin{equation}\label{eq: choice_CAb_nonlocal_y_high_rank_solution}
\begin{aligned}
    C      &= \diag(0_r,\mathbf{1}_{n-r})\ , 
    &A_{ij}  = \frac{e_i e_j^\top + e_j e_i^\top }{2}\ , \; 1\leq i<j\leq r\ , \\
    A_{ii} &= \begin{cases}
        e_ie_i^\top , &1\leq i <r\\ 
        \diag(I_r,0_{n-r}), &i = r
    \end{cases}\ ,
    &b_{ij} = 
    \begin{cases}
        1 , & i=j=1 \;\text{or}\;i=j=r \\ 
        0, &\text{otherwise}
    \end{cases} \ .
\end{aligned}    
\end{equation}
For this choice of $C$, $\Amap$, and $b$, it is easy to verify that $\Xsol =e_1e_1^\top$ is the unique primal solution, and the following is a dual optimal solution:  
\begin{equation}\label{eq: optimal_y_nonlocal_SDP}
[\ysol]_{ij} =\begin{cases}
\frac{1}{2} & i=j= 1 \\
-\frac{1}{2} & i = j=r\\
0 & \text{otherwise}
\end{cases}\ .
\end{equation}
Moreover, the dual slack $Z(\ysol)=\mathbf{diag}(0,\frac{1}{2}\mathbf{1}_{n-(r-1)},\mathbf{1}_{n-r})$ is strictly complementary to $e_1e_1^\top$. 

Set $\rho$ and the dual variable $y \in \RR^{r(r+1)/2}$  in \eqref{eq: AL} as the following:
\begin{equation}\label{eq: y_nonlocal_SDP}
\rho =1,\quad     [y]_{ij} = \begin{cases}
       \frac{1}{2r}-1   , & i=j=1\\
       \frac{1}{2r} , & 1<i =j<r\\
       -\frac{1}{2}, & i=j=r\\
        0, & 1\leq i \not=j \leq r
    \end{cases} \ .
\end{equation}
This choices of $y$ is feasible with respect to \eqref{eq: ALminD} as $Z(y) = \mathbf{diag}(\frac{3}{2}-\frac{1}{2r}, \frac{1}{2}-\frac{1}{2r}, \dots, \frac{1}{2}, \mathbf{1}_{n-r})\succeq 0$. Moreover, the rank $r$ matrix $X_{y,\rho} = \mathbf{diag}(\frac{1}{2r} \mathbf{1}_{r}, 0_{n-r})$ is an optimal solution to \eqref{eq: ALmin} by verifying the KKT condition \eqref{eq: ALmin.KKT}. Using the KKT condition, one can show the dual optimal $z_{y,\rho}$ gives a dual slack $Z(z_{y,\rho})=\diag(0,\mathbf{1}_{n-r})$ and verify the strict complementarity. The strict complementarity enforces any primal optimal solution to \eqref{eq: ALmin} to be supported only on the left upper $r\times r$ block. By Lemma \ref{lem: AL_same_image} and the structure of $A_{ij}$, we see  $X_{y,\rho}=\mathbf{diag}(\frac{1}{2r} \mathbf{1}_{r}, 0_{n-r})$ is also unique.

For general $m\geq 3$, we pick $r=\max\{l\mid \frac{l(l+1)}{2}\leq m\}$. We consider \eqref{eq: sdp.p} with the choice of $C$, $\Amap$, and $b$ in \eqref{eq: choice_CAb_nonlocal_y_high_rank_solution}. Next, we add  $m-\frac{r(r+1)}{2}$ many redundant constraints, $\tr(0 \cdot X)=0$, to \eqref{eq: sdp.p}. By setting the additional dual variables in $\ysol$ and $y$ to $0$ and keeping the other entries as those in \eqref{eq: optimal_y_nonlocal_SDP} and \eqref{eq: y_nonlocal_SDP}, we see that the constructed \eqref{eq: sdp.p} is still primal simple with optimal solution $e_1e_1^\top$ and the corresponding \eqref{eq: ALmin} with the choice of $y$ also admits a unique rank $r$ optimal solution $X_{y,\rho} = \mathbf{diag}(\frac{1}{2r} \mathbf{1}_{r}, 0_{n-r})$.
\end{proof}

Next, we show the Barvinok-Pataki bound can still be matched if we only require the dual vector to be close to an arbitrary optimal dual solution for \eqref{eq: sdp.p}, rather than a strict complementary one.

\begin{prop}[Necessity of strict complementarity for low-rankness]\label{exam: high_rank_ALM_local_nonstrict_y}
    For any $m\geq 3$ and $n\geq 2$ with $m\leq \frac{n(n+1)}{2}$, there is some $C$, $\Amap$, $b$, such that \eqref{eq: sdp.p} has a unique rank one optimal solution $X^*$. Then, for any $\epsilon>0$, there exists an optimal dual solution and a local $y$ with $\|y-y^*\|_2\leq \epsilon$ such that the corresponding \eqref{eq: ALmin} with $\rho=1$ admits a \emph{unique} solution $X_{y,\rho}$ whose rank $r$ satisfies $r = \max\left\{l\mid \frac{l(l+1)}{2} \leq m\right\}$.
\end{prop}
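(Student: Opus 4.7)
The strategy is to recycle the SDP construction from Proposition \ref{exam: high_rank_ALM_nonlocal_y}: fix $r = \max\{l : l(l+1)/2 \leq m\}$, take $C = \diag(0_r, \mathbf{1}_{n-r})$ together with the matrices $A_{ij}$ and scalars $b_{ij}$ from \eqref{eq: choice_CAb_nonlocal_y_high_rank_solution}, and when $m > r(r+1)/2$ pad to $m$ constraints with $m - r(r+1)/2$ redundant ``$\tr(0\cdot X) = 0$'' rows. The primal remains uniquely solved by $\Xsol = e_1 e_1^\top$. The new observation is that $y^* = 0 \in \mathbb{R}^m$ is itself a dual optimal solution---$Z(0) = C \succeq 0$ and $\inprd{b}{0} = 0 = \inprd{C}{\Xsol}$---yet it fails strict complementarity, since $\rank(\Xsol) + \rank(Z(0)) = 1 + (n - r) < n$ whenever $r \geq 2$, which holds for any $m \geq 3$.

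Given $\epsilon \in (0,1)$, the next step is to exhibit a $y$ with $\|y - y^*\|_2 = O(\epsilon)$ that still drives the ALM subproblem up to rank $r$. Let $M := I_r - e_1 e_1^\top \in \symMat{r}$, a PSD rank-$(r-1)$ matrix supported on $e_1^\perp$. Let $\widetilde{\Amap} : \symMat{r} \to \mathbb{R}^{r(r+1)/2}$ denote the restriction of the nonredundant portion of $\Amap$ to $r \times r$ symmetric matrices (via zero padding); a direct inspection of the $A_{ij}$ in \eqref{eq: choice_CAb_nonlocal_y_high_rank_solution} shows that $\widetilde{\Amap}$ is a bijection. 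Set $y$ equal to $\epsilon \widetilde{\Amap}(M)$ on the nonredundant coordinates and $0$ on the redundant ones; then $\|y - y^*\|_2 = \epsilon \|\widetilde{\Amap}(M)\|_2 = O(\epsilon)$, which can be made $\leq \epsilon_0$ for any prescribed $\epsilon_0 > 0$.

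It remains to solve \eqref{eq: ALmin} with this $y$ and $\rho = 1$ and verify that the minimizer is unique of rank exactly $r$. Writing any $X \succeq 0$ in block form $X = \begin{bmatrix} X_A & X_B \\ X_B^\top & X_D \end{bmatrix}$ with $X_A \in \symMat{r}$, the objective splits (up to an additive constant) as $\tr(X_D) + F(X_A)$, where $F(X_A) = \frac{1}{2}\|\widetilde{\Amap}(X_A) - (b_0 + \epsilon \widetilde{\Amap}(M))\|^2$ depends only on the top block and $b_0$ is the nonredundant portion of $b$. When $X_A \succ 0$, the Schur-complement criterion gives $X_D \succeq X_B^\top X_A^{-1} X_B \succeq 0$, so $\tr(X_D) \geq 0$ with equality only when $X_B = 0$ and $X_D = 0$. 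The problem thus collapses to $\min_{X_A \succeq 0} F(X_A)$, whose unique (by bijectivity of $\widetilde{\Amap}$) unconstrained minimizer is $X_A = \widetilde{\Amap}^{-1}(b_0) + \epsilon M = e_1 e_1^\top + \epsilon M = \diag(1, \epsilon, \ldots, \epsilon)$, strictly positive definite for $\epsilon \in (0,1)$ and therefore PSD-feasible. This yields a unique $X_{y,\rho} = \begin{bmatrix} \diag(1, \epsilon, \ldots, \epsilon) & 0 \\ 0 & 0 \end{bmatrix}$ of rank exactly $r$.

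The main obstacle is pinning down uniqueness---specifically, forcing $X_B = X_D = 0$. This relies on $X_A$ being strictly positive definite at the optimum, which is precisely why the direction $M = I_r - e_1 e_1^\top$ is chosen: it simultaneously keeps $\|y - y^*\|_2$ small and makes every eigenvalue of the top block $X_A$ strictly positive, so that the Schur-complement bound $\tr(X_D) \geq \tr(X_B^\top X_A^{-1} X_B)$ is tight and the block decomposition argument yields uniqueness.
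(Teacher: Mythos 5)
Your proposal is correct and shares the paper's high-level strategy (perturb the dual optimal $y^* = 0$, which fails strict complementarity, and reuse the SDP from the preceding proposition), but the implementation differs in two substantive ways. First, the perturbation direction is different: the paper chooses $y_\epsilon$ with entries $y_{11} = -2(r-1)\epsilon$, $y_{ii}=\epsilon$ for $2\le i\le r-1$, $y_{rr}=-(r-1)\epsilon$, which is dual \emph{feasible} ($Z(y_\epsilon)\succeq 0$), whereas your $y=\epsilon\widetilde{\Amap}(M)$ is not dual feasible (indeed $Z(y)_{11} = -(r-1)\epsilon < 0$). The statement never requires dual feasibility of $y$, so both choices are valid; the paper's choice is more in the spirit of ALM trajectories remaining near feasibility, while yours is the minimal-footprint perturbation that makes the algebra transparent. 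Second, the verification mechanism differs: the paper exhibits the candidate $X_{\rho,y_\epsilon}$ and checks the KKT system, then invokes Lemma~\ref{lem: AL_same_image} plus the strict complementarity of $Z(z_{y,\rho})=C$ to conclude uniqueness; you instead exploit the fact that $\Amap$ and $C$ act block-diagonally, split the augmented Lagrangian (up to a constant) into $\tr(X_D) + F(X_A)$ with $F$ strictly convex thanks to the bijectivity of $\widetilde{\Amap}$, and extract the minimizer and its uniqueness directly without appealing to any auxiliary lemma. Your decomposition argument is more self-contained and makes the uniqueness structurally obvious. One small imprecision in your final paragraph: forcing $X_B = X_D = 0$ does not require $X_A\succ 0$ — it already follows from $X\succeq 0 \Rightarrow X_D\succeq 0$, so $\tr(X_D)=0$ gives $X_D=0$ and hence $X_B=0$ regardless of the definiteness of $X_A$. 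What strict positivity of $X_A$ actually buys you is that the unconstrained minimizer of $F$ lies in the PSD cone (so the constrained and unconstrained problems agree) and that the resulting rank is exactly $r$; that is the correct reason to choose $M = I_r - e_1e_1^\top$ rather than, say, a rank-deficient direction.
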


\begin{proof}
Consider the same \eqref{eq: sdp.p} given in the Proposition \ref{exam: high_rank_ALM_nonlocal_y}. We shall consider the simpler setting that $n\geq 2$ and $m = \frac{r(r+1)}{2}$ for some integer $r\in [1,n]$. The general case follows the same way as the previous example. We construct a curve $y_\epsilon$ parameterized by $\epsilon>0$. This curve approaches an optimal but not a strict complementary dual solution for \eqref{eq: sdp.p} as $\epsilon\rightarrow 0$.  We show that the optimal solution to \eqref{eq: ALmin} with $y_\epsilon$ still has rank matching the Barvinok-Pataki bound for each small $\epsilon>0$. 

We set $\rho=1$. For any $\epsilon \in (0,\frac{1}{3(r-1)})$, we set $y_\epsilon$ in \eqref{eq: ALmin} as the following:
\begin{equation}\label{eq: y_value_close_to_optimal_non_strict_comp_higher_rank}
    y_{\epsilon,ij} =\begin{cases}
        -2(r-1)\epsilon, & i=j=1 \\
        \epsilon, & i=j, \;i\in[2,r-1]\\
        -(r-1)\epsilon, & i=j=r\\
        0, &\text{otherwise}
    \end{cases}\ . 
\end{equation}
The dual slack $Z(y_\epsilon) $ is 
$
Z(y_\epsilon) = 
\diag(3(r-1)\epsilon, (r-2)\epsilon\mathbf{1}_{r-2}, (r-1)\epsilon, \mathbf{1}_{n-r}) \succeq 0$, which is also dual feasible. 
The primal optimal solution to \eqref{eq: ALmin} with $y_\epsilon$ and $\rho=1$ is given by 
\begin{equation}\label{eq: X_value_close_to_optimal_non_strict_comp_higher_rank} 
    [X_{\rho,y_\epsilon}]_{ij} = \begin{cases}
        1-2(r-1)\epsilon & i=j=1\\
        \epsilon, &  i=j, \;i\in[2,r]\\
        0, & \text{otherwise}
    \end{cases} \ .
\end{equation}
 It is clearly of rank $r$. It is also unique and satisfies the strict complementarity condition by the same reasoning as the previous example. Moreover, note that as $\epsilon$ approaches zero, the dual vector $y$ approaches a dual optimal solution to \eqref{eq: sdp.p} whose slack matrix is not strictly complementary to $e_1e_1^\top$.
\end{proof}
\begin{rem}
In Proposition \ref{exam: high_rank_ALM_nonlocal_y} and Proposition \ref{exam: high_rank_ALM_local_nonstrict_y}, we set $\rho=1$. We can deal with the general $\rho>0$ by setting $C$ in  \eqref{eq: choice_CAb_nonlocal_y_high_rank_solution} as $C=\rho I$ and leave the rest untouched. Our conclusion that \eqref{eq: ALmin} has a unique primal solution matching the Barvinok-Pataki bound remains valid. 

We can also make $C$ independent of $\rho$ by relaxing the bound by one. We set $C=0$ and add a constraint $A_0 = \diag(0,\mathbf{1}_{n-r})$  and $b_0 =0$ to \eqref{eq: choice_CAb_nonlocal_y_high_rank_solution}. Then, we set the additional dual variable $y_0  = -\rho$ for every dual vector involved in the previous examples. In this case, we increase the number of constraints to $m+1$ and the rank of the primal optimal solution to \eqref{eq: ALmin} is still $r$ with $r= \max_\ell\{\ell\mid \frac{\ell(\ell+1)}{2}\leq m\}$, which almost matches the Barvinok-Pataki bound.
\end{rem}

Our previous propositions show that if the localness assumption on the dual vector $y$ fails, then the resulting \eqref{eq: ALmin} could admit a unique solution with the highest rank, violating the first desired result of Theorem \ref{thm: ALmin_primal_simple_qg}. In the following proposition, we show the second desired result of Theorem \ref{thm: ALmin_primal_simple_qg}, that the quadratic growth holds with a non-vanishing constant, fails if the localness assumption does not hold. 

\begin{prop}[Necessity of localness for quadratic growth condition]
There is some $C$, $\Amap$, $b$, such that for any $\rho>0$, there is a set of dual optimal vectors parameterized by $\epsilon \in [0,\rho)$, which gives augmented Lagrangian $\mathcal{L}$ with a diminishing quadratic growth constant as $\epsilon \rightarrow 0$ and the quadratic growth condition \emph{fails} for $\epsilon =0$.
\end{prop}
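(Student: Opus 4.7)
The plan is to exhibit a small, non-primal-simple SDP whose dual admits a ray of optima, and to parametrize along that ray toward the endpoint where strict complementarity fails. For each $\epsilon$ in the family I will compute the ALM subproblem explicitly and sandwich its quadratic growth constant between two linear functions of $\epsilon$, with the constant vanishing at $\epsilon=0$.

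The SDP I would use is $n=3$, $m=3$, $C=0$, $A_i = e_i e_i^\top$ for $i=1,2,3$, and $b = (1,1,0)^\top$. The feasible set is $\{X \succeq 0 : X_{11}=X_{22}=1, X_{33}=0\}$; positive semidefiniteness forces $X_{13}=X_{23}=0$ but leaves $X_{12}\in[-1,1]$ free, so the primal is non-unique. The dual is $\max y_1+y_2$ subject to $\diag(-y_1,-y_2,-y_3)\succeq 0$, whose optimal face is the ray $\{(0,0,y_3):y_3\le 0\}$. I would take $y_\epsilon=(0,0,-\epsilon)$ for $\epsilon\in[0,\rho)$; these are all dual optimal, and $Z(y_\epsilon)=\diag(0,0,\epsilon)$ loses strict complementarity exactly at $\epsilon=0$.

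After substitution the augmented Lagrangian collapses to
\[
\mathcal{L}_\rho(X,y_\epsilon)=\epsilon X_{33}+\frac{\rho}{2}\bigl[(X_{11}-1)^2+(X_{22}-1)^2+X_{33}^2\bigr],
\]
with minimum value $0$ over the face $\mathcal{X}^\star=\{X : X_{11}=X_{22}=1,\; X_{33}=X_{13}=X_{23}=0,\; X_{12}\in[-1,1]\}$ for every $\epsilon\ge 0$. The diminishing upper bound on $\gamma(\epsilon)$ would come from the explicit rank-two PSD test matrix
\[
X_\delta=\begin{pmatrix}1 & 0 & \sqrt{\delta}\\ 0 & 1 & 0\\ \sqrt{\delta} & 0 & \delta\end{pmatrix}=e_2 e_2^\top+(e_1+\sqrt{\delta}\,e_3)(e_1+\sqrt{\delta}\,e_3)^\top,
\]
for which $\mathcal{L}_\rho(X_\delta,y_\epsilon)=\epsilon\delta+\frac{\rho}{2}\delta^2$ and, minimising the Frobenius distance over $c\in[-1,1]$ at $c=0$, one finds $\mathrm{dist}(X_\delta,\mathcal{X}^\star)^2=2\delta+\delta^2$. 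Hence the growth ratio is $(\epsilon+\rho\delta/2)/(2+\delta)$, which tends to $\epsilon/2$ as $\delta\to 0^+$; this gives the upper bound $\gamma(\epsilon)\le\epsilon/2$ for $\epsilon>0$ and, at $\epsilon=0$, drives the ratio to zero for any fixed $\gamma>0$, proving the outright failure of quadratic growth.

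For a matching lower bound (so that $\gamma(\epsilon)$ is genuinely positive when $\epsilon>0$) I would use the PSD inequalities $X_{13}^2\le X_{11}X_{33}$ and $X_{23}^2\le X_{22}X_{33}$ together with $\nucnorm{X}\le B$ (hence $X_{11}+X_{22}\le B$) to obtain $\mathrm{dist}(X,\mathcal{X}^\star)^2\le(X_{11}-1)^2+(X_{22}-1)^2+X_{33}^2+2BX_{33}$ in the regime $|X_{12}|\le 1$; subtracting $\gamma$ times this from $\mathcal{L}_\rho$ then forces the inequality to hold whenever $\gamma\le\min\{\rho/2,\epsilon/(2B)\}$, yielding $\gamma(\epsilon)\ge\epsilon/(2B)$. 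The principal obstacle is the bookkeeping when $|X_{12}|>1$, which contributes an extra projection term to $\mathrm{dist}^2$; this must be absorbed by the $(X_{11}-1)^2+(X_{22}-1)^2$ contribution via $X_{12}^2\le X_{11}X_{22}$, showing that any such excursion costs $\rho/2$ in the growth ratio and is therefore not binding. The combined sandwich $\epsilon/(2B)\le\gamma(\epsilon)\le\epsilon/2$ yields both the diminishing behaviour for $\epsilon>0$ and the failure at $\epsilon=0$.
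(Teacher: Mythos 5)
Your example is a genuinely different construction from the paper's, and it illustrates a real phenomenon, but it does not quite establish this proposition in the sense the paper intends, for a reason you yourself acknowledge at the outset: your SDP is \emph{not primal simple}. The paper's own construction is a $2\times 2$ problem with $C=0$, $A_1=e_1e_1^\top$, $A_2=e_2e_2^\top$, $b=(0,1)^\top$, which has the \emph{unique} rank-one primal optimum $X_\star=\mathrm{diag}(0,1)$ and the strictly complementary dual $y_\star=(-1,0)^\top$, and the family $y_\epsilon=(-\epsilon,0)^\top$ marches along the dual optimal ray toward the non-strictly-complementary endpoint $y_0=0$. Because the problem is primal simple, the ALM subproblem still has a \emph{unique} minimizer $\mathrm{diag}(0,1)$ for every $\epsilon\ge 0$, so the paper's quadratic growth constant (defined against that single point) is well-posed for every $\epsilon$, genuinely nonzero for $\epsilon>0$, and genuinely collapses to only quartic growth at $\epsilon=0$. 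That is exactly what the proposition asserts, and it isolates \emph{localness} as the failing hypothesis while keeping all the other hypotheses of Theorem~\ref{thm: ALmin_primal_simple_qg} intact.

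In your $3\times 3$ construction, the primal optimal face $\{X_{11}=X_{22}=1,X_{33}=X_{13}=X_{23}=0,\,|X_{12}|\le 1\}$ is a nontrivial segment, and you correctly note that the ALM subproblem is minimized over this entire face for every $\epsilon\ge 0$. But the paper's definition of quadratic growth (Definition of $(\gamma,r,B)$-QG) \emph{requires} a unique minimizer; with a nontrivial optimal face, point-based QG already fails for every $\epsilon$, not just at $\epsilon=0$, so there is no ``diminishing'' nonzero constant to speak of in the paper's sense. Your sandwich $\epsilon/(2B)\le\gamma(\epsilon)\le\epsilon/2$ is really about a distance-to-set growth condition, which is a weaker and different notion. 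Consequently your example conflates two distinct failures: non-uniqueness of the original primal (a failure of primal simplicity, which the paper treats separately in the appendix ``Examples of failure of primal simplicity,'' where they in fact engineer a $y$ making the ALM minimizer unique despite primal non-uniqueness) and non-localness of $y$. The clean demonstration of the proposition keeps the SDP primal simple, as the paper does, so that only the localness hypothesis is varied.
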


\begin{proof}
Consider the SDP \eqref{eq: sdp.p} with the following problem data: 
\begin{equation}
C = 0,\quad 
    A_1 = \begin{bmatrix}
        1 & 0 \\
        0 & 0
    \end{bmatrix},\quad A_2 = \begin{bmatrix}
        0 & 0 \\
        0 & 1
    \end{bmatrix},\quad \text{and}\quad b = \begin{bmatrix}
        0 \\ 1
    \end{bmatrix}\ .
\end{equation}
Note that $X_\star = \begin{bmatrix} 
0 & 0 \\ 
0 & 1\end{bmatrix}$ is a unique solution and strict complementarity is satisfied for $y_\star = [-1,0]^\top$. For any $\epsilon\in [0,\rho)$, consider $y_\epsilon = \begin{bmatrix}
    -\epsilon & 0
\end{bmatrix}^\top$. The matrix $X_\star$ continues to be the unique optimal solution for \eqref{eq: ALmin} with $\rho>0$ and the $y_\epsilon$. However, consider $X= \begin{bmatrix}
\delta & \sqrt{\delta}\\ 
\sqrt{\delta} & 1
\end{bmatrix}\succeq 0$. We have the following difference:
\begin{equation}
     \mathcal{L}_\rho (X,y) - \mathcal{L}_\rho (X_\star,y) 
     = \epsilon \delta + \frac{\rho}{2} \delta^2 \ .
\end{equation}
We also have $\fronorm{X - X_\star} = \sqrt{\delta^2 + 2\delta}$. Thus the quadratic growth parameter $\gamma \leq \frac{\epsilon}{2}$, which is shrinking as $\epsilon$ approaches $0$. Moreover, the quadratic growth condition \emph{fails} for $\epsilon =0$, and we only have a quartic growth. 
\end{proof}

\section{Burer-Monteiro ALM subproblems and beyond}\label{sec:gd}
In this section, we consider the following generic optimization problem with a decision variable $X \in \symMat{n}$, generalizing the ALM subproblem \eqref{eq: ALmin},
\begin{equation}\label{eq: f_g_A}\tag{Gen-ALM}
\begin{aligned}
    \text{minimize} &\quad f(X) \\ 
 \text{subject to}&\quad X\succeq 0\ ,
\end{aligned}
\end{equation}
where the function $f:\symMat{n}\rightarrow \real$ is convex and $L_f$-smooth.\footnote{The $L_f$-smooth assumption can be further relaxed to local $L_f$-smoothness near $X_0$. We assume global $L_f$-smoothness for simplicity.} Note that these standard assumptions are satisfied for the augmented Lagrangian subproblem \eqref{eq: ALmin}.  We shall also assume \eqref{eq: f_g_A} has a \emph{unique} solution $X_0$ throughout the section, which is also satisfied by \eqref{eq: ALmin} under conditions specified in Theorem \ref{thm: ALmin_primal_simple_qg}.

The Burer-Monteiro approach tries to solve \eqref{eq: f_g_A} by factoring the variable $X$ into $X = FF^\top$ for a new variable $F\in \mathbb{R}^{\dm \times k}$:
 \begin{equation}\label{eq: f_g_A.BM}\tag{Gen-BM}
\begin{aligned}
    \text{minimize} &\quad h(F) :=f(FF^\top) 
\end{aligned}
\end{equation}

With the above setting, in Section \ref{sec: QGf-g-A}, we first introduce an additional core assumption to \eqref{eq: f_g_A}, $(\gamma,r,B)$-quadratic growth, and discuss various settings fulfilling this condition, including the augmented Lagrangian subproblem \eqref{eq: ALmin}. Based on quadratic growth of \eqref{eq: f_g_A}, in Section \ref{sec: BM-LM-GD}, we investigate the Burer-Monteiro approach \eqref{eq: f_g_A.BM} and prove that gradient descent linearly converges with initialization close to an optimal solution to \eqref{eq: f_g_A.BM}. In the same section, we apply previous results to \eqref{eq: ALmin} to show linear convergence of gradient descent for \eqref{eq: ALmin} under proper localness conditions. In Section \ref{sec: app_ALM_sbprblm_non_exmpl}, we also show that the localness conditions for such linear convergence results are necessary. Otherwise, there are examples of \eqref{eq: ALmin} where spurious local minimizers appear.

\subsection{Quadratic growth}\label{sec: QGf-g-A}

Let us introduce our core assumptions on \eqref{eq: f_g_A}: the quadratic growth (QG) condition. Note that we also include the uniqueness assumption in the definition. 
\begin{mydef}[$(\gamma,r,B)$-QG]
Problem \eqref{eq: f_g_A} satisfies the quadratic growth condition with strength $\gamma$, rank $r$, and radius $B$ if (i) \eqref{eq: f_g_A} has a unique optimal solution $X_0$ with $\nucnorm{X}\leq B$ and $\rank(X_0)\leq r$, and (ii) for all $X\succeq 0$ with $\nucnorm{X}\leq B$ and $\rank(X)\leq r$, there holds the inequality
  \begin{equation}
   f(X) - f(X_0) \geq \gamma  \fronorm{X-X_0}^2. 
  \end{equation}
  If $r = n$ and $B = +\infty$, then we say Problem \eqref{eq: f_g_A} satisfies $\gamma$-QG. 
\end{mydef}

The quadratic growth condition is actually satisfied in many scenarios, including our focused problem: Augmented Lagrangian \eqref{eq: ALmin} under primal simplicity and the condition that the dual vector $y$ is close to a strict complementary dual solution $\ysol$ (as demonstrated by Theorem \ref{thm: ALmin_primal_simple_qg}). 

Our first example consist of problems satisfying strong convexity.

\begin{exam}[Strong convexity]
If the function $f$ is $\alpha-$strongly convex for an $\alpha>0$, i.e., for any $X,Y$, we have 
\begin{equation}
    f(X) \geq f(Y) + \inprd{\nabla f(Y)}{X-Y} + \frac{\alpha}{2}\fronorm{X-Y}^2\ ,
\end{equation}
then the problem satisfies $\alpha$-QG. Indeed, by taking $Y=X_0$ in the above inequality, we have 
\begin{equation}
    f(X) \geq f(Y) + \inprd{\nabla f(X_0)}{X-X_0} + \frac{\alpha}{2}\fronorm{X-X_0}^2 \overset{(a)}{\geq} 
    f(Y) + \frac{\alpha}{2}\fronorm{X-Y}^2\ ,
\end{equation}
where the step $(a)$ is due to the KKT condition \eqref{eq: f_g_A_kkt}. Strong convexity is satisfied when the function $f$ is a sum of a convex function and a quadratic term of the form $\fronorm{X-X^\text{ref}}^2$, where $X^{\text{ref}}$ is a reference point depending on the underlying application. In machine learning applications, one has $X^{\text{ref}}=0$ and the quadratic term is usually called the Tikhonov regularization. In many optimization subroutines for solving \eqref{eq: sdp.p}, e.g., the projection step in ADMM \cite{gabay1976dual} and extragradient method (EGM) \cite{korpelevich1977extragradient}, we also have this quadratic term and each subroutine is about optimizing a strongly convex functions over the set of positive semidefinite matrices. 
\end{exam}

Our next example consist of problems satisfying strict complementarity. To better understand strict complementarity in this setting, let us  state the KKT conditions of \eqref{eq: f_g_A}. Assuming a solution $X_0$ to $f(X)$ exists, the KKT condition of \eqref{eq: f_g_A}  is satisfied and is the following:
\begin{subequations}\label{eq: f_g_A_kkt}
    \begin{align}
        \nabla f(X_0) X_0 &= 0\ ,\\
        \nabla f(X_0)\succeq 0, \quad X_0&\succeq 0\ .\label{f_g_A_kkt_cs}
    \end{align}
\end{subequations}
Let us now introduce the second example.

\begin{exam}[Strict complementarity: \eqref{eq: ALmin} and beyond] For \eqref{eq: f_g_A}, strict complementarity holds if the KKT condition holds for an $X_0$ and 
\begin{equation}\label{eq: f_g_A_sc}
  \rank(\nabla f(X_0)) + \rank(X_0) = n \overset{(a)}{\iff} \range(X_0) = \nullspace(\nabla f(X_0))\ ,
\end{equation}
where $(a)$ is due to \eqref{f_g_A_kkt_cs}. For the augmented Lagrangian subproblem \eqref{eq: ALmin}, the above definition of strict complementarity coincides with our earlier definition \eqref{eq: ALmin.sc}. 

From Theorem \ref{thm: ALmin_primal_simple_qg}, Problem \eqref{eq: ALmin} satisfies the quadratic growth condition for all $y$ close to a strictly complementary $\ysol$. Specifically, by letting $B=2\nucnorm{X_0}$, we have the parameter $\gamma$ specified in \eqref{eq: ALmin.gamma_formula} in Theorem \ref{thm: ALmin_primal_simple_qg}. We see such $\mathcal{L}_\rho(X,y)$ satisfies $(\gamma,n,B)$-QG. 

In general, consider function $f$ of the form: 
\begin{equation}\label{eq: f_g_C_form}
f(X):= g(\Amap X) + \inprd{C}{X}\ , 
\end{equation} 
where $g:\mathbb{R}^m \rightarrow \mathbb{R}$ is an $\alpha_g$-strongly and $L_g$-smooth convex function, the map $\Amap: \symMat{\dm} \rightarrow \mathbb{R}^{m}$ is linear, and $C\in \symMat{\dm}$ is a symmetric matrix. The above form includes \eqref{eq: ALmin} as a special case. 

According to Lemma \ref{lem: f_g_A_C_QG} in the appendix, if there is a unique primal solution $X_0$ and strict complementarity holds in the sense of \eqref{eq: f_g_A_sc}, then Problem \eqref{eq: f_g_C_form} satisfies $(\gamma,n,B)$-QG for any $B>0$ with a $\gamma>0$ specified in \eqref{eq: f_g_A_fXX_0_gamma} (depending on $B$). Lastly, we note the strict complementarity and uniqueness of the primal solution are satisfied for generic $C$ so long as the primal solution exists \cite[Corollary 3.5]{drusvyatskiy2011generic}. 
\end{exam}

Our last example is an extension of strong convexity. 

\begin{exam}[Restricted strong convexity]
A function $f$ satisfies $(\alpha, r)$ restricted strong convexity if for any $X,Y$ with rank no more than $r$, we have 
\begin{equation}\label{eq: f_g_A_rsc}
f(X) \geq f(Y) + \inprd{\nabla f(X)}{Y-X} + \frac{\alpha}{2} \fronorm{X-Y}^2\ .
\end{equation}
Provided $\rank(X_0)\leq r$, Problem \eqref{eq: f_g_A} satisfies $(\alpha,+\infty, r)$-QG. 
The restricted strong convexity condition relaxes the strong convexity condition by only requiring the inequality \eqref{eq: f_g_A_rsc} to hold over low rank matrices. Such a condition is satisfied (or approximately satisfied) in many statistical signal processing applications, e.g., low-rank matrix recovery problems described in \cite{recht2010guaranteed,chen2015fast,chen2018harnessing,ding2021simplicity}, which have drawn significant attention in recent years due to their connections to modern neural networks \cite{du2018algorithmic,xiong2023over}.
\end{exam}

\subsection{Burer-Monteiro and linear convergence of gradient descent}\label{sec: BM-LM-GD}
In this section, we consider the gradient descent method for solving \eqref{eq: f_g_A.BM}: select a stepsize $\eta>0$,
\begin{equation}\label{eq: GD}\tag{\texttt{GD}}
    \text{initialize at}\;F_1\in \real^{n\times k},\quad \text{and iterate} \quad F_{t+1} = F_t - \eta \nabla h(F_t)\;\text{for}\;t=1,2,\dots
\end{equation}
We shall establish the linear convergence of the gradient descent method \eqref{eq: GD} under a quadratic growth condition, assuming initialization near an optimal solution of \eqref{eq: f_g_A.BM}. For our analysis, let $\mathcal{F}=\{F\mid FF^\top = X_0\}$ denote the set of factor matrices corresponding to the optimal solution $X_0$. For any matrix $F\in \mathbb{R}^{n\times k}$, we define $X=FF^\top$ and denote $F^*_\pi(F) := \arg\min_{F' \in \mathcal{F}} \fronorm{F-F'}$ as the projection of $F$ onto $\mathcal F$ under the Frobenius norm. The error term for $F$ is then given by $\Delta = F-F^*_\pi$. When analyzing the iteates $F_t$ of \eqref{eq: GD}, we will use the specific notation $X_k=F_tF_t^\top$ and $\Delta_t = F_t - F_{0,t}$ where $F_{0,t}:=F^*_\pi(F_t)$. We are now ready to present the linear convergence result of \eqref{eq: GD} whose proof can be found in Section \ref{sec: pf_f_g_A_GD_linear_convergence}

\begin{thm}[Linear convergence of \eqref{eq: GD}] \label{thm: f_g_A_GD_linear_convergence}
Suppose \eqref{eq: f_g_A} has $(\gamma, B, r)$ quadratic growth with $r\geq \rank(X_0)$ and \eqref{eq: f_g_A.BM} has $k = \rank(X_0)$. Then, for any  
$c_2 \leq  \min \left\{
\frac{c_1\sqrt{\sigma_{\min>0}(X_0)}}{2\sqrt{3}\sqrt{\sigma_{\max}(X_0)}}, \frac{\sigma_{\min>0}(X_0)}{2\sqrt{3}}
\right\}$, 
and 
$c_3 \leq \frac{c_0}{2L_h^2}$ where $L_h = (5\sigma_{\max}(X_0)L+ \fronorm{\nabla f(X_0)})$, we have the following inequality for 
gradient descent \eqref{eq: GD} with any $F_1$ satisfying $\fronorm{F_1F_1^\top -X_0}\leq c_2$ and any stepsize $\eta\leq c_3$:
\begin{equation}\label{eq: D_shrink}
\fronorm{\Delta_{t+1}}^2\leq (1-\frac{\eta c_0}{2})^t \fronorm{\Delta_1}^2\ . 
\end{equation}
\end{thm}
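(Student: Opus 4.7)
The plan is to establish a one-step contraction
\begin{equation*}
\fronorm{\Delta_{t+1}}^2 \;\leq\; \bigl(1-\tfrac{\eta c_0}{2}\bigr)\,\fronorm{\Delta_t}^2,
\end{equation*}
from which the stated geometric rate follows by iterating $t$ times. Since $F_{0,t+1}$ is by definition the closest point in $\mathcal{F}$ to $F_{t+1}$ while $F_{0,t}\in\mathcal{F}$ is merely one competitor, expanding the square distance along the gradient step yields
\begin{equation*}
\fronorm{\Delta_{t+1}}^2 \;\leq\; \fronorm{F_{t+1}-F_{0,t}}^2 \;=\; \fronorm{\Delta_t}^2 \;-\; 2\eta\inprd{\nabla h(F_t)}{\Delta_t} \;+\; \eta^2 \fronorm{\nabla h(F_t)}^2 .
\end{equation*}
The task is therefore to lower bound the cross term by a multiple of $\fronorm{\Delta_t}^2$ and upper bound the gradient by a multiple of $\fronorm{\Delta_t}$.

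For the cross term I would use $\nabla h(F_t) = 2\nabla f(X_t)F_t$ together with the identity $F_t\Delta_t^\top + \Delta_t F_t^\top = (X_t-X_0) + \Delta_t\Delta_t^\top$ to rewrite $\inprd{\nabla h(F_t)}{\Delta_t} = \inprd{\nabla f(X_t)}{X_t-X_0} + \inprd{\nabla f(X_t)}{\Delta_t\Delta_t^\top}$. Convexity of $f$ gives $\inprd{\nabla f(X_t)}{X_t-X_0}\geq f(X_t)-f(X_0)$, and $(\gamma,r,B)$-QG (applicable because $\rank(X_t)\leq k = \rank(X_0)\leq r$ and the induction preserves $\nucnorm{X_t}\leq B$) then yields $f(X_t)-f(X_0)\geq \gamma\fronorm{X_t-X_0}^2$. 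The key perturbative fact for the exact-rank Burer--Monteiro parametrization $k=\rank(X_0)$ is that, on a small neighborhood of $\mathcal{F}$, one has $\fronorm{X_t-X_0}^2 \geq c_1\,\sigma_{\min>0}(X_0)\,\fronorm{\Delta_t}^2$, which I would derive from the fact that $\sigma_{\min}(F_{0,t})^2 = \sigma_{\min>0}(X_0)$ via a standard BM perturbation lemma. The residual term $|\inprd{\nabla f(X_t)}{\Delta_t\Delta_t^\top}|\leq (\fronorm{\nabla f(X_0)} + L_f\fronorm{X_t-X_0})\fronorm{\Delta_t}^2$ is of higher order in $\fronorm{\Delta_t}$ and can be absorbed into half of the leading term, leaving $2\inprd{\nabla h(F_t)}{\Delta_t}\geq c_0 \fronorm{\Delta_t}^2$ for an explicit $c_0>0$ depending only on $\gamma$, $\sigma_{\min>0}(X_0)$, $\sigma_{\max}(X_0)$, and $\fronorm{\nabla f(X_0)}$.

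For the gradient upper bound I would use that every $F_0\in\mathcal{F}$ is a stationary point of $h$: the KKT condition $\nabla f(X_0)X_0=0$ combined with $\range(F_{0,t})=\range(X_0)$, which holds because $F_{0,t}$ has full column rank $k$, gives $\nabla h(F_{0,t})=2\nabla f(X_0)F_{0,t}=0$. A direct bound on the Hessian of $h$ over the neighborhood $\{F:\fronorm{FF^\top-X_0}\leq c_2\}$ shows that $h$ is locally $L_h$-smooth with $L_h = 5\sigma_{\max}(X_0)L_f + \fronorm{\nabla f(X_0)}$, so $\fronorm{\nabla h(F_t)} = \fronorm{\nabla h(F_t)-\nabla h(F_{0,t})}\leq L_h\fronorm{\Delta_t}$. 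Substituting both bounds into the one-step expansion gives $\fronorm{\Delta_{t+1}}^2 \leq (1-\eta c_0 + \eta^2 L_h^2)\fronorm{\Delta_t}^2$, and the choice $\eta\leq c_3\leq c_0/(2L_h^2)$ collapses this into the claimed factor $1-\eta c_0/2$.

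The main obstacle is the inductive bookkeeping: each of the three bounds above (QG, local smoothness, and the BM perturbation $\fronorm{X-X_0}^2\gtrsim \sigma_{\min>0}(X_0)\fronorm{\Delta}^2$), together with the absorption of the higher-order cross term, is valid only on an explicit Frobenius ball around $X_0$, and the induction must preserve $\nucnorm{X_t}\leq B$ as well. The two thresholds on $c_2$ in the statement, namely $c_1\sqrt{\sigma_{\min>0}(X_0)}/(2\sqrt{3}\sqrt{\sigma_{\max}(X_0)})$ and $\sigma_{\min>0}(X_0)/(2\sqrt{3})$, are exactly the quantities that arise from this translation: the first converts the $X$-space radius $c_2$ into an $F$-space radius on $\fronorm{\Delta_1}$ via the local BM conditioning factor $\sqrt{\sigma_{\min>0}(X_0)}$, and the second ensures that the quadratic $\Delta_t\Delta_t^\top$ correction never overwhelms the linear term. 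Once these thresholds are enforced, the contraction gives $\fronorm{\Delta_{t+1}}\leq\fronorm{\Delta_t}$ at every step, which closes the induction and delivers the iterated bound \eqref{eq: D_shrink}.
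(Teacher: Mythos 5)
Your overall architecture matches the paper's: a one‑step contraction obtained by expanding $\fronorm{\Delta_t - \eta\nabla h(F_t)}^2$, a lower bound on the cross term $\inprd{\nabla h(F_t)}{\Delta_t}$ via the identity $\Delta F^\top + F\Delta^\top = X - X_0 + \Delta\Delta^\top$, convexity plus quadratic growth plus a Burer--Monteiro perturbation bound for the leading term, and a Lipschitz‑type gradient bound with $\nabla h(F_{0,t})=0$ for the quadratic term. That much is exactly what the paper does (Lemma~\ref{thm: f_g_A} plus the induction).

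However, your treatment of the correction term $\inprd{\nabla f(X_t)}{\Delta_t\Delta_t^\top}$ has a genuine gap. You bound it in absolute value by $\bigl(\fronorm{\nabla f(X_0)} + L_f\fronorm{X_t-X_0}\bigr)\fronorm{\Delta_t}^2$ and assert this is ``of higher order in $\fronorm{\Delta_t}$.'' It is not: the piece $\fronorm{\nabla f(X_0)}\,\fronorm{\Delta_t}^2$ is exactly the same order, $O(\fronorm{\Delta_t}^2)$, as the leading term $\gamma\,\sigma_{\min>0}(X_0)\,\fronorm{\Delta_t}^2$, and $\fronorm{\nabla f(X_0)}$ is a fixed problem quantity that you cannot shrink by choosing $c_1$ or $c_2$ smaller. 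When $\fronorm{\nabla f(X_0)}$ exceeds the leading coefficient (which will happen for many constrained instances, since by KKT $\nabla f(X_0)$ is merely required to annihilate $\range(X_0)$ and can have arbitrarily large eigenvalues on the complementary subspace), your absorption fails and the asserted local‑descent inequality $\inprd{\nabla h(F_t)}{\Delta_t}\geq c_0\fronorm{\Delta_t}^2$ with $c_0>0$ is simply false as you set it up. The fact that $\fronorm{\nabla f(X_0)}$ may legitimately enter the quadratic coefficient $L_h$ (which is tamed by shrinking $\eta$) does not help here, because it would appear in the \emph{linear}-in-$\eta$ coefficient, which controls whether contraction occurs at all for small $\eta$.

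The paper avoids this by splitting $\inprd{\nabla f(X_t)}{\Delta_t\Delta_t^\top} = \inprd{\nabla f(X_t)-\nabla f(X_0)}{\Delta_t\Delta_t^\top} + \inprd{\nabla f(X_0)}{\Delta_t\Delta_t^\top}$. The first piece is bounded by $L_f\fronorm{X_t-X_0}\fronorm{\Delta_t}^2 \leq L_f c_1\fronorm{\Delta_t}^2$, which \emph{is} controllable by shrinking $c_1$. The second piece is \emph{dropped}, not bounded: since $\nabla f(X_0)\succeq 0$ by the KKT primal–dual feasibility condition \eqref{f_g_A_kkt_cs} and $\Delta_t\Delta_t^\top\succeq 0$, this term is nonnegative and can only help the lower bound. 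This sign argument is precisely what makes $c_0 = \gamma\sigma_{\min>0}(X_0)/2$ independent of $\fronorm{\nabla f(X_0)}$, which your $c_0$ (explicitly declared to depend on $\fronorm{\nabla f(X_0)}$) is not. Without that split and the PSD observation, the proof does not close.
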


Theorem \ref{thm: f_g_A_GD_linear_convergence} implies fast convergence of gradient descent on the Burer-Monteiro ALM subproblems, as presented in the following corollary, whose proof can be found in Section \ref{sec: pf_coro_BM}.
\begin{cor}\label{cor: LCBM}
Suppose \eqref{eq: sdp.p} is primal simple with a rank $\rstar$ primal solution and \eqref{eq: ALmin-BM} has $k = r_\star$ and $\rho>0$ fixed. Let $(\Xsol, \ysol)$ be a strict complementary pair of \eqref{eq: sdp.p}. Then, 
there are constants $c,\;\bar{c},\;\text{and}\;\hat{c}>0$ and $q\in(0,1)$, such that for any $y$ with $\twonorm{y-\ysol}\leq c$, and $\fronorm{F_1F_1^\top - \Xsol}\leq \hat{c}$, gradient descent \eqref{eq: GD} for \eqref{eq: ALmin-BM} with a stepsize $\eta \in
(\frac{\bar{c}}{2}, \bar{c})$  and initialized at $F_1$ satisfies the following inequality:
\begin{equation}\label{eq: ALM_GD_local_convergencen}
\fronorm{\Delta_{t+1}}\leq q^t \fronorm{\Delta_1}.
\end{equation}
\end{cor}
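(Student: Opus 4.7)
The strategy is to reduce the corollary to a direct application of Theorem \ref{thm: f_g_A_GD_linear_convergence} with $f(X) = \mathcal{L}_\rho(X, y)$, and then verify that every constant appearing in that theorem can be chosen \emph{uniformly in $y$} on some ball $\twonorm{y - \ysol} \leq c$. The non-trivial content has already been done: smoothness is immediate (since $\mathcal{L}_\rho(\cdot, y)$ is quadratic with Hessian $\rho \Amap^*\Amap$, giving $L_f = \rho \opnorm{\Amap}^2$ independent of $y$), uniform quadratic growth comes from Theorem \ref{thm: ALmin_primal_simple_qg}, and uniform control of the solution's singular values comes from Lemma \ref{lem: auxiliary}. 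The rest is bookkeeping.

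First I would pick $c > 0$ small enough that Theorem \ref{thm: ALmin_primal_simple_qg} applies and $\nucnorm{\Xyp}$ is uniformly bounded by some $B$ on the ball (using the bound \eqref{eq: Xypnucbound} from Theorem \ref{thm: ALminKKTExistSol}). This gives primal simplicity of \eqref{eq: ALmin}, the rank equality $\rank(\Xyp) = r_\star$, and the $(\gamma, n, B)$-QG property with $\gamma > 0$ depending only on the data of \eqref{eq: sdp.p} through \eqref{eq: ALmin.gamma_formula}. Since the corollary sets $k = r_\star = \rank(\Xyp)$, the rank compatibility hypothesis of Theorem \ref{thm: f_g_A_GD_linear_convergence} holds.

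Next I would control the quantities entering the constants $c_2, c_3$ in Theorem \ref{thm: f_g_A_GD_linear_convergence}. By \eqref{eq: X_y_rho_bound} of Lemma \ref{lem: auxiliary}, $\sigma_{\min>0}(\Xyp)$ and $\sigma_{\max}(\Xyp)$ are pinned between $\tfrac{1}{2}\sigma_{\min>0}(\Xsol)$ and $\tfrac{3}{2}\sigma_{\max}(\Xsol)$; and $\fronorm{\nabla f(\Xyp)} = \fronorm{Z(\zyp)}$ is bounded using Lemma \ref{lem: zypyysol} via $\twonorm{\zyp} \leq \twonorm{\ysol} + c$. Consequently $L_h$ in Theorem \ref{thm: f_g_A_GD_linear_convergence} is uniformly bounded, so one may select $y$-independent positive constants $\bar c_2, \bar c_3$ such that the hypotheses $c_2 \leq \bar c_2$ and $c_3 \leq \bar c_3$ are met for every $y$ in the ball.

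The only remaining subtlety is that Theorem \ref{thm: f_g_A_GD_linear_convergence} requires $\fronorm{F_1 F_1^\top - \Xyp} \leq c_2$, whereas the corollary gives $\fronorm{F_1 F_1^\top - \Xsol} \leq \hat c$. Here I would invoke \eqref{eq: X_y_rho_Xsol_bound}, which gives $\fronorm{\Xyp - \Xsol} \leq \tilde c \sqrt{\twonorm{y-\ysol}} \leq \tilde c \sqrt{c}$, and apply the triangle inequality:
\[
\fronorm{F_1 F_1^\top - \Xyp} \leq \hat c + \tilde c \sqrt{c}.
\]
Shrinking $c$ and $\hat c$ further makes the right-hand side at most $\bar c_2$. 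Setting $\bar c = \bar c_3$ and $\eta \in (\bar c/2, \bar c)$, Theorem \ref{thm: f_g_A_GD_linear_convergence} then yields \eqref{eq: ALM_GD_local_convergencen} with the contraction factor $q = \sqrt{1 - \eta c_0/2} \in (0,1)$, where $c_0$ is the $y$-independent quantity arising from the uniform QG constant $\gamma$.

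The main (minor) obstacle is purely one of uniformity: ensuring that the neighborhood radius $c$, the initialization radius $\hat c$, and the stepsize window $(\bar c/2, \bar c)$ can be chosen once and for all, independent of $y$ in the ball $\twonorm{y - \ysol} \leq c$. Since Theorem \ref{thm: ALmin_primal_simple_qg} explicitly provides a $y$-independent lower bound on $\gamma$ via \eqref{eq: ALmin.gamma_formula}, and Lemma \ref{lem: auxiliary} provides $y$-independent two-sided bounds on $\sigma_{\min>0}(\Xyp)$ and $\sigma_{\max}(\Xyp)$, this uniformity is free; no additional estimation is required beyond the triangle-inequality step above.
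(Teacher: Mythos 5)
Your proposal follows essentially the same route as the paper: invoke Theorem~\ref{thm: f_g_A_GD_linear_convergence} with $f = \mathcal{L}_\rho(\cdot, y)$, verify its hypotheses uniformly in $y$ via Theorem~\ref{thm: ALmin_primal_simple_qg} and Lemma~\ref{lem: auxiliary} (the two-sided singular-value bounds, the $y$-independent QG constant $\gamma$, and the Lipschitz constant $\rho\opnorm{\Amap}^2$), and convert localization around $\Xsol$ into localization around $X_{y,\rho}$ by the triangle inequality combined with \eqref{eq: X_y_rho_Xsol_bound}. In fact your contraction factor $q = \sqrt{1 - \eta c_0/2}$ is the correct one, since Theorem~\ref{thm: f_g_A_GD_linear_convergence} bounds $\fronorm{\Delta_{t+1}}^2$ rather than $\fronorm{\Delta_{t+1}}$; the paper's proof sloppily writes $q = 1 - \eta c_0/2$ at the end.
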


\begin{rem}
    The constants $c,\;\bar{c},\;\text{and}\;\hat{c}>0$ and $q\in (0,1)$, as stated in the theorem, only depends on the problem data $\Amap$, $C$, $b$, and the parameter $\rho$, \footnote{If $\rho$ is restricted to be in an finite interval, then the constants only depend on the lower and upper bounds, and $(\Amap,\;b,\;C)$.} but not the exact position of $y$. This independence is a direct consequence of the explicit constant dependencies established in our previous theorems. Since ALM requires solving a series of subproblems \eqref{eq: ALmin} parametrized by $y$, it is critical for the analysis of ALM that these constants are independent of $y$'s exact position. Without such independence, even if \eqref{eq: ALmin} satisfies strict complementarity and quadratic growth, the subproblems might become progressively harder as the method approaches the minimizer $\Xsol$. Specifically, the convergence rate q of some subproblems might increasingly approach 1, or their initial radius $\hat c$ might increasingly shrink towards 0.

    Furthermore, it is noteworthy that we only require $F_1F_1^\top$ to be close to the primal optimal solution $\Xsol$, rather than requiring $F_1F_1^\top$ to be close to $X_{y,\rho}$, the minimizer of \eqref{eq: ALmin}, which varies with $y$. This is particularly beneficial for analyzing the sequence of subproblems in \eqref{eq: ALM} because the initialization condition does not tighten with each iteration.
\end{rem}

\subsubsection{Proof of Theorem \ref{thm: f_g_A_GD_linear_convergence}}\label{sec: pf_f_g_A_GD_linear_convergence}
To establish Theorem \ref{thm: f_g_A_GD_linear_convergence}, we first prove that a local descent condition, inspired by the works  \cite{chen2015fast,yi2016fast}, holds for our problem. We then analyze the convergence of the algorithm based on this condition. Notably, unlike the approach in \cite{chen2015fast,yi2016fast}, which focuses on machine learning applications and relies on distributional assumptions on the input data, our framework begins directly from the quadratic growth condition of the original problem \eqref{eq: f_g_A}, as motivated by our main focus \eqref{eq: ALmin}.

\begin{lem}[Local descent]\label{thm: f_g_A}
    Instate the assumption of Theorem \ref{thm: f_g_A_GD_linear_convergence}.
    Then the objective function in  
    \eqref{eq: f_g_A.BM} satisfies the local descent condition: there holds the inequality 
    \begin{equation}\label{eq: local_descent}
        \inprd{\nabla h (F)}{\Delta } \geq c_0 \fronorm{\Delta }^2 \text{ for all $F$ with $\fronorm{FF^\top -X_0}\leq c_1$}\ ,
    \end{equation}
  for any $c_0 \leq  \frac{\gamma \sigma_{\min>0}(X_0)}{2}$ and $c_1 \leq \frac{\gamma \sigma_{\min>0}(X_0)}{4L}$.
\end{lem}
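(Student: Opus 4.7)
The plan is to expand $\langle \nabla h(F), \Delta\rangle$, where $\nabla h(F) = 2\nabla f(FF^\top)F$, into terms involving $X - X_0$ and $\Delta\Delta^\top$, and then invoke the quadratic growth of $f$ at $X_0$ together with a Procrustes-type inequality relating $\fronorm{X - X_0}$ to $\fronorm{\Delta}$. Concretely, writing $F = F_0 + \Delta$ where $F_0 = F^*_\pi(F)$ and using the symmetry of $\nabla f(X)$, I would first observe
\begin{equation*}
\langle \nabla h(F), \Delta\rangle \;=\; \langle \nabla f(X), F\Delta^\top + \Delta F^\top\rangle.
\end{equation*}
The algebraic identity $F\Delta^\top + \Delta F^\top = (X - X_0) + \Delta\Delta^\top$, obtained by expanding $X = (F_0+\Delta)(F_0+\Delta)^\top$ and subtracting $X_0$, then splits the inner product into a ``linear'' term $\langle \nabla f(X), X - X_0\rangle$ and a ``curvature'' term $\langle \nabla f(X), \Delta\Delta^\top\rangle$.

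For the linear term, convexity of $f$ yields $\langle \nabla f(X), X - X_0\rangle \geq f(X) - f(X_0)$, and then $(\gamma,r,B)$-QG gives $f(X) - f(X_0) \geq \gamma\fronorm{X - X_0}^2$. The hypotheses of QG apply: since $X = FF^\top$ with $F\in\real^{n\times k}$ and $k = \rank(X_0)\leq r$, we have $\rank(X)\leq r$, and the nuclear-norm bound $\nucnorm{X}\leq B$ is enforced by shrinking $c_1$ if needed (using $\nucnorm{X}\leq \nucnorm{X_0}+\sqrt{2k}\,\fronorm{X-X_0}$). For the curvature term, I would split $\nabla f(X) = \nabla f(X_0) + (\nabla f(X) - \nabla f(X_0))$. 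The first piece contributes $\langle \nabla f(X_0)\Delta,\Delta\rangle \geq 0$, because uniqueness of $X_0$ and convexity force the KKT condition $\nabla f(X_0)\succeq 0$. The second piece is bounded in absolute value by $L_f\fronorm{X - X_0}\fronorm{\Delta\Delta^\top}\leq L_f\fronorm{X - X_0}\fronorm{\Delta}^2$, via $L_f$-smoothness. Combining gives
\begin{equation*}
\langle \nabla h(F), \Delta\rangle \;\geq\; \gamma\,\fronorm{X - X_0}^2 \;-\; L_f\,\fronorm{X - X_0}\,\fronorm{\Delta}^2.
\end{equation*}

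The main obstacle is the final step: lower bounding $\fronorm{X - X_0}^2$ in terms of $\fronorm{\Delta}^2$. The first-order expansion $X - X_0 = F_0\Delta^\top + \Delta F_0^\top + \Delta\Delta^\top$ would only give a bound of order $\sigma_{\min>0}(X_0)\fronorm{\Delta}^2$ if the quadratic remainder did not interfere, and naively applying the triangle inequality loses this. Here I would invoke the standard Procrustes-type inequality from the Burer--Monteiro literature (e.g., Tu et al., 2016, Lemma 5.4), whose proof crucially uses that $F_0 = F^*_\pi(F)$ makes $F_0^\top F$ (hence $F_0^\top \Delta$) symmetric positive semidefinite:
\begin{equation*}
\fronorm{FF^\top - F_0 F_0^\top}^2 \;\geq\; 2(\sqrt{2}-1)\,\sigma_{\min>0}(X_0)\,\fronorm{\Delta}^2.
\end{equation*}
Substituting this bound and using $\fronorm{X - X_0}\leq c_1$ in the Lipschitz term, we obtain
\begin{equation*}
\langle \nabla h(F), \Delta\rangle \;\geq\; \bigl[2(\sqrt 2 - 1)\gamma\,\sigma_{\min>0}(X_0) - L_f c_1\bigr]\fronorm{\Delta}^2.
\end{equation*}
With $c_1 \leq \gamma\sigma_{\min>0}(X_0)/(4L_f)$ we have $L_f c_1 \leq \tfrac14\gamma\sigma_{\min>0}(X_0)$, and since $2(\sqrt 2 - 1) - \tfrac14 > \tfrac12$, the bracket is at least $\tfrac12\gamma\sigma_{\min>0}(X_0) \geq c_0$, completing the proof.
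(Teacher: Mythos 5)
Your proof is correct and follows essentially the same route as the paper: the identity $\langle\nabla h(F),\Delta\rangle = \langle\nabla f(X), (X-X_0)+\Delta\Delta^\top\rangle$, the three-term split into a quadratic-growth contribution, a Lipschitz remainder bounded by $L_f\fronorm{X-X_0}\fronorm{\Delta}^2\leq L_f c_1\fronorm{\Delta}^2$, and a nonnegative KKT term, and then the Procrustes-type lower bound $\fronorm{X-X_0}^2 \geq 2(\sqrt2-1)\sigma_{\min>0}(X_0)\fronorm{\Delta}^2$ (the paper invokes [Lemma~6, ge2017no], which is the same fact). Your explicit aside that the rank and nuclear-norm conditions of $(\gamma,r,B)$-QG must be checked for $X=FF^\top$ is a slightly more careful bookkeeping than the paper's, which applies QG without comment, but otherwise the two arguments coincide.
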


\begin{proof} Note
$
\inprd{\nabla h(F)}{\Delta} \overset{(a)}{=} \inprd{ 2 \nabla f(X) F}{\Delta} \overset{(b)}{=} \inprd{\nabla f(X)}{\Delta F^\top + F \Delta^\top} \overset{(c)}{=} 
    \inprd{\nabla f(X)}{X-X_0+\Delta\Delta^\top}\ .
$
Here the step $(a)$ is due to $\nabla h(F) = 2 \nabla f(X)F$. The step $(b)$ is because $\nabla f(X)$ is symmetric. The last step $(c)$ is because $\Delta F^\top + F \Delta^\top = X-X_0+\Delta\Delta^\top$. Using the above identity, we have 
\begin{equation} 
 \begin{aligned}
\inprd{\nabla h (F)}{\Delta} 
& = \inprd{\nabla f(X)}{X - X_0} 
+ \inprd{\nabla f(X) - \nabla f(X_0)}{\Delta\Delta^\top} + \inprd{\nabla f(X_0)}{\Delta \Delta^\top} \\ 
&\overset{(a)}{\geq} \gamma \fronorm{X-X_0}^2 - \frac{\gamma\sigma_{\min>0}(X_0)}{4}\fronorm{\Delta \Delta^\top} \\ 
&\overset{(b)}{\geq}
\frac{3\gamma \sigma_{\min>0}(X_0)}{4} \fronorm{\Delta}^2 - \frac{\gamma}{4}\sigma_{\min>0}(X_0)\fronorm{\Delta}^2\\
&\geq \frac{\gamma \sigma_{\min>0}(X_0)}{2}\fronorm{\Delta}^2\ . 
    \end{aligned}
    \end{equation}
In step $(a)$, we lower bound 
$\inprd{\nabla f(X)}{X-X_0}\geq f(X) - f(X_0)$ and then use the quadratic growth. The middle term $\inprd{\nabla f(X) - \nabla f(X_0)}{\Delta\Delta^\top}$ is bounded by Cauchy-Schwarz, the Lipschitz continuity of $\nabla f(X)$, and our requirement on $\fronorm{FF^\top -X_0}$. We drop the last term $\inprd{\nabla f(X_0)}{\Delta \Delta^\top}$ as both matrices are PSD. In step $(b)$, we use \cite[Lemma 6]{ge2017no} for lower bounding $\fronorm{X-X_0}$.
\end{proof}

With the lemma in hand, let us prove Theorem \ref{thm: f_g_A_GD_linear_convergence}. 

\begin{proof}[Proof of Theorem \ref{thm: f_g_A_GD_linear_convergence}]
We will show \eqref{eq: D_shrink} by induction that for all $k>0$, we have
\begin{align}
\fronorm{\Delta_{t+1}}^2\leq (1-\frac{\eta c_0}{2}) \fronorm{\Delta_t}^2 \label{eq: D_shrink_onestep}\\
\fronorm{F_tF_t^\top - X_0}\leq c_1\ . \label{eq: F_tF_t_X_0_bound}
\end{align}
The convergence inequality \eqref{eq: D_shrink} is immediate after establishing \eqref{eq: D_shrink_onestep}.

\textbf{Base case $t=1$.} 
The bound \eqref{eq: F_tF_t_X_0_bound} is satisfied by assumption (as $ c_2\leq c_1$). Thus, we only need to prove \eqref{eq: D_shrink_onestep} for the base case $t=1$. For notation simplicity, we set $F = F_1$ and $\Delta = \Delta_1$. To prove  \eqref{eq: D_shrink_onestep}, we first establish the following two items: (i) Bound of $\fronorm{\Delta }$, $\fronorm{\Delta}\leq  \frac{1}{4}\sqrt{\sigma_{\min>0}(X_0)}$, and (ii) Bound of $\fronorm{\nabla h(F)}$, $\fronorm{\nabla h(F)} \leq L_h \fronorm{\Delta}$. We remark that the second item is actually a consequence of the first.

\underline{\textit{Bound on $\fronorm{\Delta}$:}} Using
 \cite[Lemma 6]{ge2017no}, we have that 
 \begin{equation}\label{eq: BM_Delta_bound}
   \fronorm{\Delta }\leq \frac{\sqrt{3}}{2\sqrt{\sigma_{\min>0}(X_0)}}\fronorm{X - X_0} 
   \overset{(a)}{\leq} 
   \frac{1}{4}\sqrt{\sigma_{\min>0}(X_0)}\ ,
 \end{equation}
 where the step $(a)$ is due to our choice of $c_2$. Given the above bound, we immediately have
\begin{align}\label{eq: F_bound_BM}
    \opnorm{F}
     \leq \frac{5}{4}\sqrt{\sigma_{\max}(X_0)}\ .
\end{align}
Using $X-X_0
    = \Delta_t F^\top +F\Delta_t ^\top +\Delta_t \Delta_t ^\top$, we also have 
\begin{equation}\label{eq: X_X_0_bound_BM}
    \fronorm{X-X_0} 
     \leq 4\sqrt{\sigma_{\max}(X_0)} \fronorm{\Delta}\ .
\end{equation}

\underline{\textit{Bound $\fronorm{\nabla h(F)}$ as a consequence of \eqref{eq: BM_Delta_bound}:}} Let us estimate the Frobenius norm of  $\nabla h(F) = \nabla f(X)F$:
\begin{equation}
\begin{aligned}\label{eq: bm_gradient_lip}
    \fronorm{\nabla f(X)F} 
    & \overset{(a)}{=} 
    \fronorm{\nabla f(X)F -\nabla f(X_0)F_{0,t}} \\ 
    & \overset{(b)}{\leq} 
    \fronorm{\nabla f(X)F -\nabla f(X_0)F} + 
    \fronorm{\nabla f(X_0)F -\nabla f(X_0)F_{0,t}}\\ 
    & \overset{(c)}{\leq} 
    \fronorm{\nabla f(X) -\nabla f(X_0)} \opnorm{F} + 
    \fronorm{\nabla f(X_0)}\fronorm{\Delta}\\
    & \overset{(d)}{\leq} \underbrace{(5\sigma_{\max}(X_0)L+ \fronorm{\nabla f(X_0)})}_{=L_h}\fronorm{\Delta}\ .
\end{aligned}
\end{equation}
Here, in the step $(a)$, we use $\nabla f(X_0) F_{0,t} =0$ due to optimality of $F_{0,t}$ to \eqref{eq: f_g_A.BM}. In the step (b), we use the triangle inequality and add and subtract the term $\nabla f(X_0)F$. In the step $(c)$, we use the properties of the Frobenius norm. In the last step $(d)$, we use that $f$ is $L$-Lipschitz, \eqref{eq: F_bound_BM} and \eqref{eq: X_X_0_bound_BM}.

We are ready to prove \eqref{eq: D_shrink_onestep} for $t=1$.

\underline{\textit{Proof of \eqref{eq: D_shrink_onestep} for $t=1$:}} 
We have the following derivation for $\Delta_{t+1}$:
\begin{equation}
    \begin{aligned}
    \label{eq: shrinkage_derivation_BM_steps}
\fronorm{\Delta_{t+1}}^2 &
\overset{(a)}{\leq}  \fronorm{\Delta_{t}}^2 
-\eta \inprd{\nabla h(F_t)}{F_t-F^*_{\pi,t}} + \eta ^2 \fronorm{\nabla h(F_t)}^2 \\ 
&\overset{(b)}{\leq} \fronorm{\Delta_t}^2 -\eta c_0 \fronorm{\Delta_t}^2 +\eta^2 L_h^2 \fronorm{\Delta_t}^2\\
& \overset{(c)}{\leq} (1-\frac{\eta c_0}{2}) \fronorm{\Delta_t}^2 \ .
    \end{aligned}
\end{equation}
Here, in the step $(a)$, we first use 
$\fronorm{\Delta_{t+1}}^2
\leq 
\fronorm{\Delta_t- \eta \nabla h(F)}^2 $ 
due to the optimality of $F_{0,t+1}$, and then we expand the square  $\fronorm{\Delta_t- \eta \nabla h(F)}^2$. The step $(b)$ is because the local descent condition is satisfied due to \eqref{eq: F_tF_t_X_0_bound} for $t=1$ and the bound \eqref{eq: bm_gradient_lip}. The last step $(c)$ is because $\eta \leq \frac{c_0}{2L_h^2}$. 

\textbf{Induction step: general $t>0$.} To prove our induction hypothesis for general $t>0$, we claim that we only need to show \eqref{eq: BM_Delta_bound} for general $t$, i.e., $\fronorm{\Delta_t} \leq \frac{1}{4}\sqrt{\sigma_{\min>0}(X_0)}$, and the induction hypothesis \eqref{eq: F_tF_t_X_0_bound}. Suppose the claim is true for now. We are left to prove \eqref{eq: D_shrink_onestep}. 

\underline{\textit{Proof of \eqref{eq: D_shrink_onestep} for general $t$ under the claim:}} To prove \eqref{eq: D_shrink_onestep}, note that 
$\fronorm{\nabla h(F_t)}\leq L_h \fronorm{\Delta_t}$ is implied by \eqref{eq: BM_Delta_bound} using the same argument as before, and \eqref{eq: F_tF_t_X_0_bound} ensures the local descent condition can be applied.
Hence, we can then follow the same proof in the paragraph ``Prove  \eqref{eq: D_shrink_onestep} for $t=1$" to prove \eqref{eq: D_shrink_onestep}. 

Let us prove the claim in the following.

\underline{\textit{Proof of the claim \eqref{eq: BM_Delta_bound} for general $t$:}} Thanks to  the induction hypothesis \eqref{eq: D_shrink_onestep}, we have 
\begin{equation}
\begin{aligned}\label{eq: shirnkage_BM_general_k}
\fronorm{\Delta_t}
& \leq \fronorm{\Delta_{t-1}}\leq \dots \leq\fronorm{\Delta_1}\\
&\overset{(a)}{\leq} \frac{\sqrt{3}}{2\sigma_{\min>0}(X_0)}\fronorm{X_1 - X_0} \\
& \overset{(b)}{\leq} \min \left\{
\frac{c_1}{4\sqrt{\sigma_{\max}(X_0)}}, \frac{\sqrt{\sigma_{\min>0}(X_0)}}{4}
\right\}\ .
\end{aligned}
\end{equation}
Here, the step $(a)$ is due to \cite[Lemma 6]{ge2017no} and the step $(b)$ is due to our choice of $c_2$. Thus, \eqref{eq: BM_Delta_bound} is proven.

\underline{\textit{Proof of the claim and the hypothesis \eqref{eq: F_tF_t_X_0_bound}
for general $t$:}} From $X_t-X_0= \Delta_t F^\top_t + F_t \Delta^\top_t  -\Delta_t\Delta^\top_t$, and \eqref{eq: BM_Delta_bound} for the step $t$ we just proved, we see  
\[
\fronorm{X_t-X_0}\leq 4\sqrt{\sigma_{\max}(X_0)} \fronorm{\Delta_t}\ .
\]
Thus, the \eqref{eq: F_tF_t_X_0_bound} is proven given the bound in \eqref{eq: shirnkage_BM_general_k} and our choice of $c_2$.
\end{proof}

\subsubsection{Proof of Corollary \ref{cor: LCBM}}
\label{sec: pf_coro_BM}
Here we prove the corollary by invoking Theorem \ref{thm: f_g_A_GD_linear_convergence}. 
\begin{proof}[Proof of Corollary \ref{cor: LCBM}]
Let us determine $c_i$, $i=1,2,3,4$ mentioned in Theorem \ref{thm: f_g_A_GD_linear_convergence}. 
From Theorem \ref{thm: ALmin_primal_simple_qg} and Lemma \ref{lem: auxiliary}, we know there is a $c>0$ (depending only on $\Amap$, $C$, $b$, and $\rho$), such that for any $\twonorm{y-\ysol}\leq c$, we have
(i) $\sigma_{\max}(X_{y,\rho}) \in 
[\frac{1}{2} \sigma_{\min>0}(\Xsol),\frac{3}{2}\sigma_{\max}(\Xsol)]$, (ii) the Lipschitz constant of the gradient of $\mathcal{L}_\rho$ is $\rho\opnorm{\Amap}^2$, and (iii) $\mathcal{L}_\rho$ has quadratic growth $(\gamma, n,2\nucnorm{X_\star})$ and $\gamma$ is independent of $y$ from \eqref{eq: ALmin.gamma_formula}. Hence, we see all the constants in Theorem \ref{thm: f_g_A_GD_linear_convergence} can be chosen depending on the problem data $\Amap$, $C$, $b$, and $\rho$. Specifically, we have the following choices:
\begin{equation}
\begin{aligned}
c_0 & = \frac{\gamma \sigma_{\min<0}(\Xsol)}{4}\ , 
& c_1 & = \frac{\gamma \sigma_{\min>0}(\Xsol)}{8\rho\opnorm{\Amap}^2}\ ,\\ 
c_2 & = \min
\left\{\frac{c_1 \sqrt{\sigma_{\min>0}(\Xsol)}}{4\sqrt{3}\sqrt{\sigma_{\max}(\Xsol)}},\frac{\sigma_{\min>0}(\Xsol)}{4\sqrt{3}}\right\}\ , 
& c_3 &= \frac{c_0}{15\rho\sigma_{\max}(\Xsol)\opnorm{\Amap}^2+2\fronorm{Z(\ysol)}} \ .
\end{aligned}
\end{equation}

Moreover, from the inequality \eqref{eq: X_y_rho_Xsol_bound}  in Lemma \ref{lem: auxiliary}, we also have 
$\fronorm{X_{y,\rho}-\Xsol} \leq \frac{c_2}{2}$ if $c$ is chosen smaller enough.

With the above choices, if $\fronorm{FF^\top -\Xsol}\leq \frac{c_2}{2} = \hat{c}$, then $\fronorm{FF^\top -X_{y,\rho}}\leq c_2$. The condition of Theorem \ref{thm: f_g_A_GD_linear_convergence} is satisfied, and we have that \eqref{eq: ALM_GD_local_convergencen} holds with $q= 1-\frac{\eta c_0}{2}\geq (1-\frac{c_3 c_0}{4})$ as $\eta \in [\frac{c_3}{2}, c_3]$. We set $\bar{c} = c_3$. Our proof is complete.
\end{proof}

\subsection{Sensitivity of Assumptions in Corollary \ref{cor: LCBM}}\label{sec: app_ALM_sbprblm_non_exmpl}

In this section, we present examples showing the localness conditions in the Corollary \ref{cor: LCBM} are necessary. The localness of $y$ and the initialization $F_1$ in Corollary \ref{cor: LCBM}, though sounds restrictive, is actually necessary. 
%
%

Our first example demonstrate that for some choices of $C,\;\Amap,\;b$, and $\rho$, the original \eqref{eq: sdp.p} is primal simple with a rank one solution, yet the ALM subproblem  \eqref{eq: ALmin-BM} with some dual infeasible $y$ admits a high-rank (as high as $n-1$) spurious \emph{local minimizer} $\hat{F}$ of  \eqref{eq: ALmin-BM}. Moreover, the product $\hat{F}\hat{F}^\top$ is feasible for \eqref{eq: sdp.p}. Combining the update of \eqref{eq: ALM}, we see that the ALM in this case will halt at the current $(\hat{F},y)$ if one tries to solve the subproblem \eqref{eq: ALmin-BM} by
gradient descent starting at $\hat{F}$. 
Recall that typical first- and second-order methods can at best guarantee to find a second-order stationary point. Thus, this example also puts more complicated algorithms for solving \eqref{eq: ALmin} at risk. The example is inspired by \cite[Theorem 1]{bhojanapalli2018smoothed}.

\begin{prop}[Necessity of dual localness]
For any $n$, $\rho>0$, and $k\leq n-1$, there is some $C$, $\Amap$, $b$, such that \eqref{eq: sdp.p} is primal simple with a rank one solution, and there is some $y$ for \eqref{eq: ALmin-BM} so that \eqref{eq: ALmin-BM} admits a spurious local minimizer $\hat{F}$ with rank $k$ such that $\Amap(\hat{F}\hat{F}^\top) = b$.
\end{prop}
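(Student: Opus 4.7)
The plan is to explicitly construct, in the spirit of \cite{bhojanapalli2018smoothed}, a primal-simple rank-one SDP whose Burer--Monteiro augmented Lagrangian at the dual vector $y = 0$ admits a second feasible point of rank $k$ as a local minimum. Fix $n \geq 2$, $1 \leq k \leq n-1$, $\rho > 0$, and set $\alpha := \rho/(4k)$. Take the $m = k+1$ constraints $A_1 := I_n$ with $b_1 := 1$ (so $\tr X = 1$) and $A_j := \tfrac{1}{2}(e_1 e_j^\top + e_j e_1^\top)$ with $b_j := 0$ for $j = 2, \ldots, k+1$ (so $X_{1,j} = 0$), objective $C := -\alpha\, e_1 e_1^\top$, ALM dual vector $y := 0_{k+1}$, and candidate spurious factor $\hat F := \tfrac{1}{\sqrt k}(e_2 \mid e_3 \mid \cdots \mid e_{k+1}) \in \mathbb{R}^{n \times k}$, which has rank $k$. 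Primal simplicity is routine: the SDP dual point $\ysol := -\alpha\, e_1 \in \mathbb{R}^{k+1}$ gives slack $Z(\ysol) = \diag(0, \alpha, \ldots, \alpha)$, whence $\Xsol := e_1 e_1^\top$ is the unique primal optimum --- strict complementarity $\rank(\Xsol) + \rank(Z(\ysol)) = 1 + (n-1) = n$, strong duality, and uniqueness (any other primal optimum must annihilate $Z(\ysol)$, forcing $X_{ii} = 0$ for $i \geq 2$ and hence $X = \Xsol$ by the trace constraint) all hold. Also $\hat F \hat F^\top = \tfrac{1}{k}\sum_{i=2}^{k+1} e_i e_i^\top$ has trace one with vanishing first row, so $\Amap(\hat F \hat F^\top) = b$.

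Denote $h(F) := \mathcal{L}_\rho(FF^\top, 0)$. Stationarity at $\hat F$ is clear: since $\Amap(\hat F \hat F^\top) = b$, we have $\nabla h(\hat F) = 2 Z(0) \hat F = -2\alpha\, e_1 e_1^\top \hat F = 0$, because the first row of $\hat F$ vanishes. The main technical step is the second-order analysis. Writing $F = \hat F + U$, letting $u^\top \in \mathbb{R}^{1 \times k}$ denote the first row of $U$ and $W \in \mathbb{R}^{k \times k}$ the block of rows $2, \ldots, k+1$ of $U$, and using $X_{11} = \twonorm{u}^2$, $X_{1,j} = u_{j-1}/\sqrt k + (Wu)_{j-1}$ for $j = 2, \ldots, k+1$, and $\tr X - 1 = \fronorm{U}^2 + 2 \tr(W)/\sqrt k$, a direct expansion gives
\[
    h(\hat F + U) = -\alpha\, \twonorm{u}^2 + \tfrac{\rho}{2}\left[(\tr X - 1)^2 + \twonorm{\tfrac{u}{\sqrt k} + Wu}^2\right].
\]
For $\fronorm{U} \leq 1/(4\sqrt k)$ one has $\opnorm{W} \leq \fronorm{W} \leq \fronorm{U} \leq 1/(4\sqrt k)$, whence expanding the square and applying Cauchy--Schwarz yields $\twonorm{\tfrac{u}{\sqrt k} + Wu}^2 \geq \twonorm{u}^2/k - 2\opnorm{W}\twonorm{u}^2/\sqrt k \geq \twonorm{u}^2/(2k)$. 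The choice $\alpha = \rho/(4k)$ then absorbs the $-\alpha\twonorm{u}^2$ contribution exactly, and leaves $h(\hat F + U) \geq \tfrac{\rho}{2}(\tr X - 1)^2 \geq 0 = h(\hat F)$, so $\hat F$ is a local minimizer.

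Finally, $\hat F$ is spurious: $F_\star := (e_1, 0, \ldots, 0) \in \mathbb{R}^{n \times k}$ satisfies $\Amap(F_\star F_\star^\top) = b$ and $h(F_\star) = -\alpha < 0 = h(\hat F)$, so $\hat F$ is not globally optimal. The principal obstacle is the second-order step above: because $Z(0) = -\alpha\, e_1 e_1^\top$ is not PSD, convex-analytic arguments do not apply, and positive curvature in direction $e_1$ must come entirely from the quadratic penalty $\sum_{j=2}^{k+1} X_{1,j}^2$ arising from the constraints $X_{1,j} = 0$. These constraints are introduced precisely to produce the $\twonorm{u}^2/k$ term in the expansion, and the balance condition $\alpha \leq \rho/(2k)$ forces the scaling $\alpha = \rho/(4k)$.
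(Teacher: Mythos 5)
Your proof is correct, and it takes a genuinely different route from the paper's construction. The paper works with $n$ constraints of the form $X_{id}=0$ ($i<d$) plus a scaled trace constraint, places the unique rank-one optimum at the $(d,d)$ corner, takes $\hat F = \eta[I_k;0]$, and drives the local-minimizer argument with a free parameter $\epsilon$ subject to the \emph{strict} inequality $\epsilon < \rho\eta^2/4$. Your construction is leaner: only $k+1$ constraints ($\tr X = 1$ and $X_{1j}=0$, $j=2,\dots,k+1$), the spurious point $\hat F$ supported on rows $2,\dots,k+1$, and the single tuned coefficient $\alpha=\rho/(4k)$ chosen so that the negative contribution $-\alpha\|u\|^2$ from $C$ is \emph{exactly} cancelled by the lower bound $\rho\|u\|^2/(4k)$ extracted from the penalty on the constraints $X_{1j}=0$. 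Both constructions are in the spirit of \cite[Theorem 1]{bhojanapalli2018smoothed}, and both rely on the same underlying mechanism: a dual-infeasible $y$ (your $Z(0)=-\alpha e_1e_1^\top$, the paper's $Z(y)=\epsilon\,\mathrm{diag}(0_{d-1},-1)$) whose single negative eigendirection is stabilized by the quadratic penalty around a feasible point $\hat F\hat F^\top$ with $\Amap(\hat F\hat F^\top)=b$. Your version buys a more transparent parameter count (one ratio $\alpha/\rho$ rather than the coupled triple $(\epsilon,\eta,a)$) and a cleaner second-order expansion, at the cost of working out the slightly more involved mixed term $Wu$; the paper's version keeps the perturbation algebra closer to the block structure of $\hat F$ but needs more bookkeeping of parameters. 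I verified your primal-simplicity claim, the identity $\Amap(\hat F\hat F^\top)=b$, the stationarity of $\hat F$, the lower bound $\|u/\sqrt k+Wu\|^2\ge\|u\|^2/(2k)$ under $\|U\|_F\le 1/(4\sqrt k)$, and the strict suboptimality via $F_\star$; all check out.
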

\begin{proof}
Let us now specify $\Amap$, $C$, and $b$. For $\Amap$ and $C$, we set them in the following way:
\begin{equation}\label{eq: ALM-BM-non-example-A-C-1}
    A_i = \frac{1}{2}\left(e_ie_d^\top + e_d e_i^\top \right), \; i=1,\dots, d-1,\; A_d = \epsilon \begin{bmatrix}
        I_{d-1} &  0 \\
        0 & 1
    \end{bmatrix}, \;\text{and}\; 
    C = \epsilon \begin{bmatrix}
        2I_{d-1} & 0 \\
        0 & 1
    \end{bmatrix}\ ,
\end{equation}
where $\epsilon>0$ is a number to be determined later. We set $b = \epsilon a \begin{bmatrix}
    0_{d-1} \\ 1 
\end{bmatrix}$ for some $a>0$. 
With the above setup, one can easily verify that $\Xsol = \begin{bmatrix}
    0_{(d-1)\times (d-1)} & 0_{(d-1)\times 1} \\
    0_{1 \times (d-1)} & a
\end{bmatrix}$ is the unique rank one solution, and $\ysol = \begin{bmatrix}
    0_{(d-1)\times 1} \\
    1
\end{bmatrix}$ is a dual optimal solution satisfying strict complementarity. Now consider \eqref{eq: ALmin-BM} with $y = [0_{d-1},\;2]^\top$ and $\hat{F} = \eta \begin{bmatrix}
    I_k \\ 0 
\end{bmatrix}$ for some $\eta >0$. Let $\hat{X} = \hat{F}\hat{F}^\top$. We require the following relationship between $a$, $\epsilon$, and $\eta$ 
\begin{equation}
\epsilon <\frac{\eta^2 \rho}{4},\quad \text{and}\quad \eta^2 k = a\ . 
\end{equation}
With the above setup, we have 
\begin{equation}
  \Amap(\hat{X}) -b
     =0 \quad \text{and}\quad 
    C - \Amap^* y = \epsilon \begin{bmatrix}
        0 & 0 \\
        0 & -1
    \end{bmatrix} \ .
\end{equation}

Note that the first optimality condition of \eqref{eq: ALmin-BM}, $\nabla _F \bar{\mathcal{L}}_\rho (\hat{F},y)=0$, is satisfied:
\begin{equation}\label{eq: BM-nonexample-non-local-foc}
   \nabla _F \bar{\mathcal{L}}_\rho (\hat{F},y)= [C-\Amap^*(y +\rho(b-\Amap(\hat{X})))]\hat{F} = [C-\Amap^*y]\hat{F}  =0\ .
\end{equation}

Let us show that $\hat{F}$ is in fact a local minimizer of \eqref{eq: ALmin-BM}. 
Note this implies that $\hat{F}$ also satisfies the second order necessary condition, $\nabla ^2_F \bar{\mathcal{L}}_\rho (\hat{F},y)\succeq 0$. 
The objective difference between $\bar{\mathcal{L}}_\rho (F,\ysol)$ for an arbitrary $F=\hat{F} +\Delta$ and $\bar{\mathcal{L}}_\rho (\hat{F},y)$ is 
\begin{equation}
    \begin{aligned}
  \bar{\mathcal{L}}_\rho (F,y) - \bar{\mathcal{L}}_\rho (\hat{F},y) 
\overset{(a)}{=}   \inprd{C-\Amap^* y}{\Delta \Delta ^\top} + \frac{\rho}{2} \twonorm{\Amap(\hat{F}\Delta ^\top  + \Delta \hat{F}^\top+\Delta \Delta^\top)}^2\ .
    \end{aligned} 
\end{equation}
Here, the step $(a)$ is due to the step $(a)$ in \eqref{eq: Al_diff} and the first-order condition \eqref{eq: BM-nonexample-non-local-foc}.
Since 
\begin{equation}
\twonorm{\Amap(\hat{F}\Delta ^\top  + \Delta \hat{F}^\top+\Delta \Delta^\top)}^2 = 
p_2^2 + \twonorm{(\eta I_k + \Delta_k)u_d}^2 + \sum_{i=k+1}^{d-1} (u_{i}^\top u_d)^2\ , 
\end{equation}
where $\Delta_k = \begin{bmatrix}
    u_1,\;\cdots,\; u_k 
\end{bmatrix}^\top $
and $p_2^2$ is a nonnegative term. Thus, for any $\opnorm{\Delta}\leq \eta (1-\frac{1}{\sqrt{2}})$ and $\epsilon<\frac{\rho\eta^2}{4}$, we see 
\begin{equation}
    \begin{aligned}
 \bar{\mathcal{L}}_\rho (F,y) - \bar{\mathcal{L}}_\rho (\hat{F},y) 
 \geq  &  -\epsilon \twonorm{u_d}^2 + 
 \frac{\rho\eta^2}{4} \twonorm{u_d}^2 \geq 0\ .
    \end{aligned} 
\end{equation}
Hence, we see that $\hat{F}$ is a local minimizer within the radius $\opnorm{\Delta}\leq \eta\left(1-\frac{1}{\sqrt{2}}\right)$. It is not globally optimal since the KKT condition for $\hat{X}$ is not satisfied as $C-\Amap^*(y +\rho(b-\Amap(\hat{X})))\not\succeq 0$.
\end{proof}

Our second example show that in general, the problem \eqref{eq: ALmin-BM} with an optimal $\ysol$ satisfying strict complementarity may still admit a spurious local minimizer for any $k\leq n-1$, even if the optimal solution to \eqref{eq: ALmin} is rank $1$. Thus, to solve \eqref{eq: ALmin-BM}, methods should start from where is close to a global optimal point of \eqref{eq: ALmin-BM} to avoid being trapped by local minimizers.

\begin{prop}[Necessity of primal localness]
For any $n$ and $k\leq n-1$, there is some $C$, $\Amap$, $b$, such that \eqref{eq: sdp.p} is primal simple with a rank one solution, and for any $\rho>0$, there is an optimal strict complementarity dual solution $\ysol$  such that \eqref{eq: ALmin-BM} with $\ysol$ admits a spurious local minimizer with rank $k$. Note that  \eqref{eq: ALmin} with $\ysol$, in this case, admits $\Xsol$ as its unique optimal solution as well. 
\end{prop}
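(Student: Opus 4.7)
The plan is to adapt the construction from the preceding proposition so that the gradient of $\bar{\mathcal{L}}_\rho(\cdot, \ysol)$ at the candidate spurious point $\hat{F} = \eta[I_k; 0]$ vanishes at an \emph{optimal} strict complementary dual $\ysol$, rather than at an infeasible $y$ as before. The central observation is that in the previous proof the translation $y - \ysol = e_d$ produced the cancellation needed to annihilate $Z(\ysol)\hat{F}$; here we achieve the same cancellation via the penalty contribution $\rho\,\mathcal{A}^*(\mathcal{A}\hat{X} - b)\,\hat{F}$ by calibrating $b$ to produce exactly the right residual infeasibility at $\hat{F}$.

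Concretely, I would keep the data matrices from the previous proposition, namely $A_i = \tfrac{1}{2}(e_i e_d^\top + e_d e_i^\top)$ for $i < d$, $A_d = \epsilon\,\operatorname{diag}(I_{d-1}, 1)$, and $C = \epsilon\,\operatorname{diag}(2 I_{d-1}, 1)$, but choose $b = \epsilon a\, e_d$ with $a = \eta^2 k + \tfrac{1}{\rho\epsilon}$. Under this choice, $b - \mathcal{A}(\hat{F}\hat{F}^\top) = \tfrac{1}{\rho}\, e_d$, so that $\ysol + \rho(b - \mathcal{A}\hat{X}) = 2 e_d$, and the gradient matrix of the augmented Lagrangian at $\hat{X}$ becomes $C - \mathcal{A}^*(\ysol + \rho(b - \mathcal{A}\hat{X})) = -\epsilon\, e_d e_d^\top$, which annihilates $\hat{F}$ because the $d$-th row of $\hat{F}$ is zero. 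Primal simplicity with rank-one primal solution $\Xsol = a\, e_d e_d^\top$ and strict complementary dual $\ysol = e_d$ (for which $Z(\ysol) = \epsilon\,\operatorname{diag}(I_{d-1}, 0)$) is verified exactly as in the earlier proposition.

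For local minimality I would exploit the explicit identity
\[
\bar{\mathcal{L}}_\rho(F, \ysol) - \bar{\mathcal{L}}_\rho(\hat{F}, \ysol) = -\epsilon\, \twonorm{v}^2 + \tfrac{\rho}{2}\,\twonorm{\mathcal{A}(FF^\top - \hat{F}\hat{F}^\top)}^2,
\]
obtained by expanding $Y := FF^\top - \hat{F}\hat{F}^\top$ and observing that the first-order terms in $\operatorname{tr}(\Delta_1)$ cancel between $\langle Z(\ysol), Y \rangle$ and the difference $\twonorm{\mathcal{A}FF^\top - b}^2 - \twonorm{\mathcal{A}\hat{F}\hat{F}^\top - b}^2$; here $\Delta := F - \hat{F}$ has top $k \times k$ block $\Delta_1$, and $v \in \mathbb{R}^k$ denotes the $d$-th row of $\Delta$. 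Since the first $k$ components of $\mathcal{A}(FF^\top - \hat{F}\hat{F}^\top)$ are exactly the entries of $(\eta I_k + \Delta_1)v$, one obtains $\twonorm{\mathcal{A}(\cdot)}^2 \geq (\eta - \opnorm{\Delta_1})^2 \twonorm{v}^2$. Imposing $\epsilon < \rho \eta^2/2$ and restricting to a Frobenius-norm neighborhood with $\opnorm{\Delta_1} < \eta - \sqrt{2\epsilon/\rho}$ then yields nonnegativity of the objective difference, certifying $\hat{F}$ as a local minimizer; non-global optimality is immediate by comparing $\bar{\mathcal{L}}_\rho(\hat{F}, \ysol) = 2\epsilon \eta^2 k + \tfrac{3}{2\rho}$ against $\mathcal{L}_\rho(\Xsol, \ysol) = \epsilon a = \epsilon \eta^2 k + \tfrac{1}{\rho}$.

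The main technical obstacle, beyond the second-order analysis, is controlling the higher-order (third- and fourth-order in $\Delta$) contributions to $\twonorm{\mathcal{A}(FF^\top - \hat{F}\hat{F}^\top)}^2$; fortunately these additional terms, such as $\epsilon^2(\operatorname{tr}(Y))^2$, are nonnegative and merely reinforce positivity, so the argument closes cleanly for small $\fronorm{\Delta}$. A secondary subtlety is that the relation $a = \eta^2 k + 1/(\rho\epsilon)$ couples the problem data to $\rho$, so in executing the proof the triple $(C, \mathcal{A}, b)$ (parameterized by $a$ and $\epsilon$) must be selected as a function of $\rho$, with the optimal dual $\ysol = e_d$ remaining unchanged; this matches the scope of the preceding proposition and produces, for every $\rho > 0$, a primal-simple SDP whose \eqref{eq: ALmin-BM} at the strict complementary optimal dual admits the claimed rank-$k$ spurious local minimizer.
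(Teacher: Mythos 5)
Your local-minimality analysis is correct and the explicit identity
\[
\bar{\mathcal{L}}_\rho(F,\ysol) - \bar{\mathcal{L}}_\rho(\hat F,\ysol) = -\epsilon\twonorm{v}^2 + \tfrac{\rho}{2}\twonorm{\Amap(FF^\top - \hat F\hat F^\top)}^2
\]
is accurate (it follows from the paper's \eqref{eq: Al_diff} with $y=\ysol$ since the first-order term vanishes at $\hat F$). The lower bound $\twonorm{\Amap(\cdot)}^2 \geq (\eta-\opnorm{\Delta_1})^2\twonorm{v}^2$ from the first $k$ components of $\Amap$ is also sound, and the higher-order terms you worry about are not an obstacle: they only add to the nonnegative penalty contribution.

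The genuine gap is in the quantifier order, which you flag but incorrectly dismiss as matching the proposition's scope. The statement reads ``there is some $C,\Amap,b$, such that \ldots for any $\rho>0$, there is an optimal strict complementarity dual solution $\ysol$.'' That is $\exists(C,\Amap,b)\,\forall\rho\,\exists\ysol$. Your construction needs $b=\epsilon a\,e_d$ with $a=\eta^2 k + \tfrac{1}{\rho\epsilon}$, so the data depends on $\rho$: you prove $\forall\rho\,\exists(C,\Amap,b,\ysol)$, which is the (weaker) quantifier order of the \emph{preceding} proposition on dual localness, not this one. The issue is structural, not cosmetic: with your choice $C=\epsilon\,\diag(2I_{d-1},1)$ and $d$ constraints, the dual of \eqref{eq: sdp.p} has a \emph{unique} optimum $\ysol=e_d$ (the dual objective is $\epsilon a y_d$, and $y_d=1$ forces $y_i=0$ for $i<d$ by the PSD constraint), so $\ysol$ cannot absorb the $\rho$-dependence. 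If you instead try to fix $(C,\Amap,b)$ and let the candidate stationary point scale as $\hat F_\rho = \eta_\rho[I_k;0]$, you are forced to $\eta_\rho^2 = (a - \tfrac{1}{\rho\epsilon})/k$, which is negative for small $\rho$ and in any case fails the second-order condition $\epsilon < \rho\eta_\rho^2/2$ when $\rho$ is small. So the construction genuinely cannot cover all $\rho>0$ with fixed data.

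The paper avoids this by setting $C=0$ and adding an extra diagonal constraint $A_{d+1}=\epsilon\,\diag(2I_{d-1},1)$ (what was your $C$ now becomes a constraint matrix), so $b\in\RR^{d+1}$. This gives the dual problem a one-parameter family of optima $\{[0_{d-1};s;-s]:s\ge 0\}$, all strictly complementary for $s>0$, and the paper picks $\ysol=[0_{d-1};\rho;-\rho]$. The $\rho$-dependence is pushed into $\ysol$ (which the proposition allows) rather than into $(C,\Amap,b)$ (which it does not). The numerical relations $\epsilon(5\eta^2 k - 3a)=-1$, $\epsilon(3\eta^2 k - 2a)=-2$ then fix $\epsilon,\eta,a$ independently of $\rho$, so the data is $\rho$-free. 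To repair your proof you would need a similar device — an extra constraint that makes the dual optimal face positive-dimensional — rather than folding the $\rho$-dependence into $b$.
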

\begin{proof}
Let us now specify $\Amap$, $C$, and $b$. We set $C=0$. We set $\Amap$ in the following way:
\begin{equation}\label{eq: ALM-BM-non-example-A-C-2}
    A_i = \frac{1}{2}\left(e_ie_d^\top + e_d e_i^\top \right), \; i=1,\dots, d-1,\; A_d = \epsilon \begin{bmatrix}
        I_{d-1} &  0 \\
        0 & 1
    \end{bmatrix}, \;\text{and}\; 
    A_{d+1} = \epsilon \begin{bmatrix}
        2I_{d-1} & 0 \\
        0 & 1
    \end{bmatrix},
\end{equation}
where $\epsilon>0$ is a number to be determined later. We set $b = \epsilon a \begin{bmatrix}
    0_{d-1} \\ 1 \\ 1
\end{bmatrix} \in \RR^{\dm+1}$ for some $a>0$. 
One can verify that $\Xsol = \begin{bmatrix}
    0_{(d-1)\times (d-1)} & 0_{(d-1)\times 1} \\
    0_{1 \times (d-1)} & a
\end{bmatrix}$ is the unique rank one solution, and $\ysol = \begin{bmatrix}
    0_{(d-1)\times 1} &
    \rho  & -\rho
\end{bmatrix}$ is a dual optimal solution satisfying strict complementarity. Now consider \eqref{eq: ALmin-BM} with $y = \ysol$ and $\hat{F} = \eta \begin{bmatrix}
    I_k \\ 0 
\end{bmatrix}$ for some $\eta >0$ to be determined later. Let $\hat{X} = \hat{F}\hat{F}^\top$. 
 We require the following relationship between $a$, $\epsilon$, and $\eta$ 
\begin{equation}
\epsilon< \frac{\eta^2}{8},\quad  \epsilon(5\eta^2 k - 3a) = - 1, \quad \text{and}\quad  
\epsilon(3\eta^2 k - 2a) = - 2,
\end{equation}
which can be satisfied for any $\epsilon <\frac{1}{\sqrt{2k}}$, $\eta ^2= \frac{4}{\epsilon k}$, and $a = \frac{7}{\epsilon}$.
With this setup, we have  
\begin{equation}
\begin{aligned}
    C - \Amap^*\ysol  + \rho \Amap^*(\Amap(\hat{X})-b) 
    & = \epsilon \begin{bmatrix}
        \rho I_{d-1} & 0 \\
        0 & 0
    \end{bmatrix} + \rho \epsilon^2 (\eta ^2 k -a)I + \rho 
    \epsilon^2 (2\eta ^2 k -a)
    \begin{bmatrix}
        2I_{k-1} & 0 \\
        0 & 1 
    \end{bmatrix}
    \\
    & = 
    \epsilon \rho 
\begin{bmatrix}
    0_{(d-1)\times (d-1)} & 0_{(d-1)\times 1} \\
    0_{1 \times (d-1)} & -2
\end{bmatrix}\ .
\end{aligned}
\end{equation}
Thus, the first optimality condition of \eqref{eq: ALmin-BM} below is satisfied,
\begin{equation}
    [C-\Amap^*(\ysol +\rho(b-\Amap(\hat{X})))]\hat{F} =0\ .
\end{equation}

Let us show that $\hat{F}$ is in fact a local minimizer. The objective difference between $\bar{\mathcal{L}}_\rho (F,\ysol)$ for an arbitrary $F=\hat{F} +\Delta$ and $\bar{\mathcal{L}}_\rho (\hat{F},\ysol)$ is 
\begin{equation}
    \begin{aligned}
  \bar{\mathcal{L}}_\rho (F,\ysol) - \bar{\mathcal{L}}_\rho (\hat{F},\ysol) 
 =  \inprd{C-\Amap^*(\ysol+\rho(b-\Amap(\hat{X})))}{\Delta \Delta ^\top} + \frac{\rho}{2} \twonorm{\Amap(\hat{F}\Delta ^\top  + \Delta \hat{F}^\top+\Delta \Delta^\top)}^2\ .
    \end{aligned} 
\end{equation}
Since 
$\twonorm{\Amap(\hat{F}\Delta ^\top  + \Delta \hat{F}^\top+\Delta \Delta^\top)}^2 = 
p_2^2 + \twonorm{(\eta I_k + \Delta_k)u_d}^2 + \sum_{i=k+1}^{d-1} (u_{i}^\top u_d)^2$, where $\Delta_k = \begin{bmatrix}
    u_1,\cdots, \;u_k 
\end{bmatrix}^\top $ and $p_2^2$ is a nonnegative term. Thus, for any $\opnorm{\Delta}\leq \eta\left(1-\frac{1}{\sqrt{2}}\right)$, we have 
\begin{equation}
    \begin{aligned}
  \bar{\mathcal{L}}_\rho (F,\ysol) - \bar{\mathcal{L}}_\rho (\hat{F},\ysol) 
 \geq    -2\rho\epsilon\twonorm{u_d}^2 + 
 \frac{\rho\eta^2}{4} \twonorm{u_d}^2 \geq 0\ .
    \end{aligned} 
\end{equation}
Hence, to ensure $\hat{F}$ is a local minimizer within the radius $\opnorm{\Delta}\leq 1-\frac{1}{\sqrt{2}}$.
\end{proof}

\section{ALORA: a rank-adaptive augmented Lagrangian method}\label{sec:alora}
The preceding sections establish favorable theoretical properties of the augmented Lagrangian method and the Burer–Monteiro formulation. In particular, we demonstrate the regularity of augmented Lagrangian subproblems in the local regime and highlight the effectiveness of first-order methods, such as gradient descent, in solving these subproblems.

Motivated by both our theoretical insights and prior advances in augmented Lagrangian methods~\cite{burer2003nonlinear,wang2023solving,monteiro2024low}, we propose ALORA (Augmented Lagrangian Optimizer with Rank Adaptation) for solving the semidefinite program~\eqref{eq: sdp.p}. The detailed procedure is outlined in Algorithm~\ref{alg:alora}.

\begin{algorithm}[ht!]
        \SetAlgoLined
        {\bf Input:} tolerance $\epsilon$, penalty parameter $\rho$ \;
        {\bf initialize.} $y\gets 0$\;
        \Repeat{$\mathrm{KKT}(FF^\top,y)\leq \epsilon$}{
            $F\gets \argmin_F L_\rho(FF^\top,y)$\;
            $V\gets$ $r$-many negative smallest eigenvectors of $\nabla_X L_\rho(X,y)\mid_{X=FF^\top}$\;
            \If{$\lambda_{\operatorname{min}}(\nabla_X L_\rho(X,y)\mid_{X=FF^\top})$ is sufficiently negative}{
                $S\gets \argmin_{S\succeq 0} L_\rho(FF^\top+VSV^\top,y)$\;
                $F\gets \begin{bmatrix}
                    F & VS^{1/2}
                \end{bmatrix}$\;         
            }
        $y\gets y+\rho(b-\mathcal AFF^\top)$\;
        }
        {\bf Output:} $FF^\top$.
        \caption{Augmented Lagrangian Optimizer with Rank Adaptation (ALORA)}
        \label{alg:alora}
\end{algorithm}

ALORA builds upon the augmented Lagrangian method combined with the Burer–Monteiro factorization. In each iteration, the algorithm begins by approximately solving the nonconvex augmented Lagrangian subproblem. As discussed in Section~\ref{sec:gd}, first-order methods such as gradient descent are well-suited for this task, particularly in the local regime where the subproblem exhibits favorable geometry.

Given a solution $F$ to the current subproblem, ALORA analyzes the spectrum of the augmented Lagrangian gradient $\nabla_X L_\rho(X,y)\mid_{X=FF^\top}$. Specifically, a small number of eigenvectors corresponding to the most negative eigenvalues are extracted. Rank adaptation is then performed based on the spectral information: if the minimum eigenvalue is sufficiently negative, ALORA solves a small-scale semidefinite subproblem to determine the appropriate ``step size'' in the directions of negative curvature. The factor matrix $F$ is then expanded accordingly. This is followed by an update of the dual variable $y$, completing one iteration of the algorithm.

ALORA incorporates several enhancements over the standard augmented Lagrangian method with the Burer–Monteiro factorization. These modifications are detailed below:
\begin{itemize} 
    \item \textbf{Adaptive rank updates via spectral information} (lines 5–9 in Algorithm~\ref{alg:alora}): ALORA adaptively adjusts the rank by computing the eigenvectors corresponding to the \emph{most $r$ negative eigenvalues} of the gradient $\nabla_X L_\rho(X,y)\mid_{X=FF^\top}$. This strategy helps mitigate the nonconvexity challenges introduced by the Burer–Monteiro formulation and promotes improved global convergence. Similar spectral-based rank adjustment techniques, though often restricted to \emph{one eigenvector}, have been explored in prior work~\cite{yurtsever2021scalable}, and further explored in~\cite{wang2023solving} for manifold optimization and in~\cite{monteiro2024low} for trace-constrained SDPs.

    \item \textbf{Exploring negative curvature directions} (line 7 in Algorithm~\ref{alg:alora}): When negative curvature is detected, ALORA solves an additional low-dimensional semidefinite subproblem. This procedure is computationally efficient due to the small size of the subproblem and is motivated by previous approaches~\cite{ding2023revisiting}, with adaptations to suit our setting. The goal is to encode the spectral information of $\nabla_X L_\rho(X,y)\mid_{X=FF^\top}$ into the factor matrix $F$, using a stepsize matrix $S$ that ensures a decrease in the augmented Lagrangian value.
\end{itemize}
These enhancements enable ALORA to dynamically adapt the rank of the factorized solution $F$, thereby improving robustness and promoting better global convergence behavior.

\section{GPU implementation and numerical experiments}\label{sec:gpu}
In this section, we present a GPU implementation of ALORA, and evaluate its numerical performance on two classes of large-scale semidefinite programs arising from MaxCut and matrix completion problems.

\textbf{GPU Implementation.} ALORA is particularly amenable to GPU acceleration due to its reliance on highly parallelizable linear algebra operations. The pivotal computational kernels in ALORA, such as sparse matrix-matrix multiplication (SpMM) for gradient evaluation and dense matrix-matrix multiplication (GEMM) for adaptive rank adjustment among others, are all operations that map efficiently to modern GPU architectures. These routines dominate the cost of computing gradients, solving the augmented Lagrangian subproblem and the rank adaptation step. Therefore in this section, a practical variant of ALORA (referred to simply as ALORA) is implemented on GPUs to demonstrate its scalability and efficiency.

\textbf{Computing environments.} ALORA is implemented on GPUs in the Julia programming language~\cite{bezanson2017julia}, with GPU acceleration handled via CUDA.jl~\cite{besard2018effective}. Experiments are conducted using an NVIDIA-A100-80GB-PCIe GPU with CUDA version 12.4, deployed on a computing cluster equipped with an Intel Xeon Silver 4316 CPU.

\textbf{Termination criteria.}
ALORA terminates when relative KKT error of \eqref{eq: sdp.p} is no greater than the termination tolerance $\epsilon\in(0,\infty)$, defined as:
\begin{equation}
    \mathrm{KKT}(X,y):=\max\left\{\frac{\|\mathcal AX-b\|_2}{1+\|b\|_2}, \frac{|\min\{0,\lambda_{\text{min}}(C-\mathcal A^*y)\}|}{1+\|C\|_F}, \frac{|\langle C,X\rangle-b^\top y|}{1+|\langle C,X\rangle|+|b^\top y|}\right\} \leq \epsilon \ ,
\end{equation}
where $\lambda_{\text{min}}(\cdot)$ is the minimum eigenvalue. We set $\epsilon=10^{-5}$ for all experiments in this section. Notably, we use the $\ell_2$ norm in the relative terms in primal and dual infeasibility measures, resulting in a significantly stricter termination criterion, often by several orders of magnitude, compared to the $\ell_1$ norm-based criteria adopted in prior work~\cite{han2024accelerating}.

\subsection{MaxCut}
The MaxCut problem is a classical combinatorial optimization task with a well-known SDP relaxation. Given an undirected graph $G=(V,E)$ with weights $\omega_{ij}\geq 0$ on the edges, the MaxCut problem seeks a partition of the vertex set $V$ into two disjoint subsets that maximizes the total weight of edges crossing the partition. The standard SDP relaxation of MaxCut is formulated as
\begin{equation*}
    \begin{aligned}
\max_{X \in \mathbb{R}^{n \times n}} \quad & \frac{1}{4} \langle L, X \rangle \\
\text{s.t.} \quad & X_{ii} = 1 \quad \text{for all } i \in [n], \\
& X \succeq 0 \ ,
\end{aligned}
\end{equation*}
where the Laplacian matrix \( L \in \mathbb{R}^{n \times n} \) is defined by $L_{ij} = 
\begin{cases}
-\omega_{ij}, & \text{if } (i,j)\in E, \\
\sum_{k} \omega_{ik}, & \text{if } i = j\\
0, & \text{otherwise}
\end{cases}$.

For our experiments, we use benchmark graphs from the DIMACS10 collection~\cite{davis2011university}, which consists of various types of graphs from the 10th DIMACS Implementation Challenge on graph partitioning and graph clustering. We filter and select the large-scale instances by retaining only graphs with more than 1,000,000 vertices. For each selected graph, we construct the corresponding Laplacian matrix $L$, which is then used to form the SDP relaxation.

\begin{table}[ht!]
\centering
\begin{minipage}{0.45\textwidth}
\centering
\begin{tabular}{lrr}
\toprule
instance & dimension & time \\
\midrule
\texttt{NACA0015} & 1039183 & 11.485 \\
\texttt{delaunay\_n20} & 1048576 & 7.333 \\
\texttt{kron\_g500-logn20} & 1048576 & 131.911 \\
\texttt{rgg\_n\_2\_20\_s0} & 1048576 & 9.888 \\
\texttt{belgium\_osm} & 1441295 & 8.283 \\
\texttt{delaunay\_n21} & 2097152 & 15.262 \\
\texttt{kron\_g500-logn21} & 2097152 & 335.199 \\
\texttt{rgg\_n\_2\_21\_s0} & 2097152 & 21.679 \\
\texttt{packing-500x100x100-b050} & 2145852 & 26.731 \\
\texttt{netherlands\_osm} & 2216688 & 12.081 \\
\texttt{M6} & 3501776 & 46.567 \\
\texttt{333SP} & 3712815 & 40.248 \\
\texttt{AS365} & 3799275 & 49.985 \\
\texttt{venturiLevel3} & 4026819 & 41.529 \\
\texttt{NLR} & 4163763 & 51.739 \\
\texttt{delaunay\_n22} & 4194304 & 33.287 \\
\texttt{rgg\_n\_2\_22\_s0} & 4194304 & 47.507 \\
\texttt{hugetrace-00000} & 4588484 & 50.669 \\
\texttt{channel-500x100x100-b050} & 4802000 & 80.858 \\
\bottomrule
\end{tabular}
\end{minipage}
\hfill
\begin{minipage}{0.45\textwidth}
\centering
\begin{tabular}{lrr}
\toprule
instance & dimension & time \\
\midrule
\texttt{hugetric-00000} & 5824554 & 77.006 \\
\texttt{hugetric-00010} & 6592765 & 79.448 \\
\texttt{italy\_osm} & 6686493 & 40.709 \\
\texttt{adaptive} & 6815744 & 96.847 \\
\texttt{hugetric-00020} & 7122792 & 83.734 \\
\texttt{great-britain\_osm} & 7733822 & 61.270 \\
\texttt{delaunay\_n23} & 8388608 & 67.985 \\
\texttt{rgg\_n\_2\_23\_s0} & 8388608 & 164.510 \\
\texttt{germany\_osm} & 11548845 & 94.783 \\
\texttt{asia\_osm} & 11950757 & 108.128 \\
\texttt{hugetrace-00010} & 12057441 & 186.480 \\
\texttt{road\_central} & 14081816 & 174.989 \\
\texttt{hugetrace-00020} & 16002413 & 314.768 \\
\texttt{delaunay\_n24} & 16777216 & 189.024 \\
\texttt{rgg\_n\_2\_24\_s0} & 16777216 & 412.733 \\
\texttt{hugebubbles-00000} & 18318143 & 387.761 \\
\texttt{hugebubbles-00010} & 19458087 & 280.580 \\
\texttt{hugebubbles-00020} & 21198119 & 308.113 \\
\texttt{road\_usa} & 23947347 & 308.473 \\
\bottomrule
\end{tabular}
\end{minipage}
\caption{Performance of ALORA on MaxCut instances. Solve time in seconds.}
\label{tab:maxcut}
\end{table}

Table \ref{tab:maxcut} reports the performance of ALORA on large-scale MaxCut instances. We observe that the solver consistently handles graphs with millions of vertices, with solving times slightly increasing with problem size. For graphs with around 1–5 million vertices, ALORA typically completes within a few tens of seconds. For larger instances exceeding 10 million vertices, solving times are still within several minutes, even for the largest graph \texttt{road\_usa} with approximately 24 million vertices. Overall, the results demonstrate the scalability and efficiency of ALORA across a diverse set of challenging graphs.

\subsection{Matrix completion}
In the matrix completion experiments, we formulate the recovery problem as a semidefinite program based on nuclear norm minimization as follows
\begin{equation*}
\begin{aligned}
    \min_{Y} \quad & \|Y\|_*\\ 
    \text{s.t.} \quad & Y_{ij} = M_{ij}, \quad \forall (i,j) \in \Omega \ ,
\end{aligned}  
\end{equation*}
which can be equivalently written as the following SDP:
\begin{equation*}
    \begin{aligned}
        \min_{X\in\mathbb R^{n\times n}} \quad & \frac{1}{2} \operatorname{tr}(X)\\
        \text{s.t.} \quad &
X = \begin{pmatrix} W_1 & Y \\ Y^\top & W_2 \end{pmatrix} \succeq 0, \quad Y_{ij} = M_{ij}, \quad \forall (i,j) \in \Omega \ ,
    \end{aligned}
\end{equation*}
Instances are generated using the same procedure as \cite{monteiro2024low}. More specifically, given rank $\rstar\leq \min(n_1,n_2)$, the ground truth matrix $M$ is generated as $M=UV^\top$, where $U\in\mathbb R^{n_1\times \rstar}$ and $V\in\mathbb R^{n_2\times \rstar}$ have independent standard Gaussian entries, and denote $n=n_1+n_2$ the matrix dimension. Observations are sampled uniformly at random, with the number of observed entries set to $m = \left\lceil \gamma \rstar (n - \rstar) \right\rceil$ where the oversampling ratio is given by $\gamma = \rstar \log(n)$.

\begin{table}[ht!]
\hspace{-0.7cm}
\begin{minipage}{0.45\textwidth}
    \centering
    \begin{tabular}{ccccc}
    \toprule
    \shortstack{$r$ \\ \ } & \shortstack{$n$ \\ \ } & \shortstack{$m$ \\ \ } & \shortstack{time \\ \ } & \shortstack{time \\ (initial rank = 1)} \\
    \midrule
    \multirow{6}{*}{3} 
    & 10000 & 828659   & 0.379 & 0.562 \\
    & 20000 & 1782453  & 0.726 & 1.040 \\
    & 50000 & 4868248  & 1.409  & 2.344 \\
    & 100000 & 10361604 & 2.944  & 3.929 \\
    & 200000 & 21961921 & 5.511  & 7.937 \\
    & 350000 & 40223331 & 12.759  & 14.956 \\
    \bottomrule
    \end{tabular}
\end{minipage}
\hfill
\begin{minipage}{0.45\textwidth}
    \centering
    \begin{tabular}{ccccc}
    \toprule
    \shortstack{$r$ \\ \ } & \shortstack{$n$ \\ \ } & \shortstack{$m$ \\ \ } & \shortstack{time \\ \ } & \shortstack{time \\ (initial rank = 1)} \\
    \midrule
    \multirow{6}{*}{5} 
& 10000 & 2300917   & 0.504 & 1.894 \\
& 20000 & 4952616   & 0.969 & 3.349 \\
& 50000 & 13522024  & 3.025   & 8.745 \\
& 100000 & 28777703 & 6.762  & 17.297 \\
& 200000 & 61013229 & 14.786   & 35.204 \\
& 350000 & 111700922 & 28.282 & 71.133 \\
\bottomrule
\end{tabular}
\end{minipage}
\caption{Performance of ALORA on matrix completion with varying $r$, $n$, and $m$. Solve time in seconds.}
\label{tab:mc}
\end{table}

Table \ref{tab:mc} demonstrates the efficiency of ALORA in solving matrix completion problems. The results show that instances with millions of observations can be solved within just a few seconds. For example, problems with approximately $5\times10^6$ constraints are completed in around one second. Even for large-scale instances with over $10^7$ observed entries, ALORA solves them in less than a minute. Furthermore, ALORA initialized with a rank-1 solution continues to perform robustly and efficiently. These results highlight that ALORA can reliably handle large-scale matrix completion problems involving millions to tens of millions of constraints at high speed.

\section{Conclusions}

Despite the notable empirical success of the ALM-BM framework for solving large-scale low-rank SDPs, the ALM subproblems have largely lacked rigorous theoretical understandings concerning their structural inheritance and efficient solvability by first-order methods. Our theoretical results aim to bridge this gap between theory and computation. We rigorously demonstrate that, contingent upon the regularity of the original SDP, terms as primal simplicity, ALM subproblems \eqref{eq: ALmin} inherit crucial properties such as low-rankness and strict complementarity when the dual variable is local. Furthermore, we establish the quadratic growth condition of ALM subproblems, and demonstrate that non-convex ALM-BM subproblems \eqref{eq: ALmin-BM} are amenable to global optimization via gradient descent, exhibiting linear convergence under conditions of local initialization and dual variable proximity. Crucially, through illustrative examples, we demonstrate that the local nature of these theoretical guarantees is not a restrictive limitation but rather an inherent and essential characteristic of the underlying problem structure, thereby underscoring the necessity of our assumptions.

Motivated by these theoretical understandings, we designed and implemented a new GPU-based SDP solver, ALORA. By leveraging the favorable structural properties, namely, guaranteed low-rank solutions and efficient local convergence, ALORA enhances the ALM–BM framework with adaptive rank updates and exploration of negative curvature directions. Its GPU-based implementation enables efficient handling of large-scale instances, demonstrating strong scalability and computational performance in solving large-scale low-rank SDPs.


\section*{Acknowledgement}
Haihao Lu is supported by AFOSR Grant No. FA9550-24-1-0051 and ONR Grant No. N000142412735. Jinwen Yang is supported by AFOSR Grant No. FA9550-24-1-0051.

\bibliographystyle{amsplain}
\bibliography{ref-papers}

\appendix

\section{A weaker result on solution rank of the ALM subproblem}
\label{sec: weaker_result_rank}
We present a weaker result of Theorem \ref{thm: ALmin_primal_simple_qg} under a weaker assumption: assume primal simplicity without the uniqueness assumption on \eqref{eq: sdp.p} and the dual vector is close to a strict complementary solution, then the subproblem \eqref{eq: ALmin} admits a solution with rank less than or equal to the maximum rank of primal optimal solutions. This is contingent on the original problem \eqref{eq: sdp.p} having a low-rank solution and the dual vector $y$ being sufficiently close to a strictly complementary dual optimal solution $\ysol$.
\begin{prop}\label{thm: primal_low_rank}
Consider the primal-dual SDP pair \eqref{eq: sdp.p} and \eqref{eq: sdp.d}. Suppose the condition \eqref{eq: DSlater's} holds and \eqref{eq: sdp.p} and \eqref{eq: sdp.d} satisfy strong duality with an optimal primal-dual pair $(\Xsol,\ysol)$. Then if strict complementarity holds for $(\Xsol,\ysol)$, it holds for any $y$ with $\twonorm{y - \ysol}\leq \frac{\sigma_{\min >0}(Z(\ysol))}{3 \opnorm{\Amap}}$ that
\begin{equation}\label{eq: Xyprank_prank}
    \rank(\Xyp) \leq \rank(\Xsol)\ .
\end{equation}
\end{prop}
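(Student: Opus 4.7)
The plan is to combine the complementarity condition of the KKT system for \eqref{eq: ALmin} with a perturbation bound on the eigenvalues of the dual slack $Z(\zyp)$. The key observation is that the complementarity $Z(\zyp)\Xyp=0$ from \eqref{eq: ALmin.KKT.cs} forces $\range(\Xyp)\subseteq \nullspace(Z(\zyp))$, which immediately yields the rank bound $\rank(\Xyp)\le n-\rank(Z(\zyp))$. So the problem reduces to showing $\rank(Z(\zyp))\ge \rank(Z(\ysol))$, after which the strict complementarity of $(\Xsol,\ysol)$ closes the chain via $\rank(\Xsol)=n-\rank(Z(\ysol))$.

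First I would invoke Proposition~\ref{prop: existence_ALmin_P_D_sd} to guarantee, under \eqref{eq: DSlater's}, the existence and uniqueness of a dual optimal $\zyp$ satisfying the KKT system \eqref{eq: ALmin.KKT} for every optimal $\Xyp$. Then I would use the nonexpansiveness bound \eqref{eq: zypysol} from Lemma~\ref{lem: zypyysol} to get $\twonorm{\zyp-\ysol}\le \twonorm{y-\ysol}$, and propagate this through the adjoint $\Amap^*$ to estimate
\[
\opnorm{Z(\zyp)-Z(\ysol)}\le \fronorm{\Amap^*(\zyp-\ysol)} \le \opnorm{\Amap}\twonorm{\zyp-\ysol}\le \opnorm{\Amap}\twonorm{y-\ysol}\le \tfrac{1}{3}\sigma_{\min>0}(Z(\ysol)),
\]
where the last inequality is the assumed radius condition on $y$. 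By Weyl's inequality, each of the top $\rank(Z(\ysol))$ eigenvalues of $Z(\zyp)$ exceeds $\tfrac{2}{3}\sigma_{\min>0}(Z(\ysol))>0$, and hence $\rank(Z(\zyp))\ge \rank(Z(\ysol))$.

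Combining the two pieces yields $\rank(\Xyp)\le n-\rank(Z(\zyp))\le n-\rank(Z(\ysol))=\rank(\Xsol)$, where the final equality is exactly the strict complementarity hypothesis \eqref{eq: sdp.sc} for $(\Xsol,\ysol)$. There is no real obstacle in this argument; the only subtlety is the choice of constant $\tfrac{1}{3}$ in the radius, which is picked so that Weyl's inequality still leaves a strict positive gap below the nonzero eigenvalues of $Z(\ysol)$. I note that this proof uses neither the uniqueness of $\Xsol$ nor any low-rank stability of $\Xyp$ itself, which is precisely why the conclusion is only the one-sided inequality $\rank(\Xyp)\le\rank(\Xsol)$, as opposed to the equality \eqref{eq: XypZyp.rank} established in Theorem~\ref{thm: ALmin_primal_simple_qg} under the stronger primal simplicity assumption.
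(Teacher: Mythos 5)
Your proof is correct and follows essentially the same route as the paper's: invoke Proposition~\ref{prop: existence_ALmin_P_D_sd} for existence, bound $\|Z(\zyp)-Z(\ysol)\|$ via Lemma~\ref{lem: zypyysol} and the radius condition, apply Weyl's inequality to get $\rank(Z(\zyp))\ge\rank(Z(\ysol))$, and then chain through the complementarity $Z(\zyp)\Xyp=0$ and the strict complementarity of $(\Xsol,\ysol)$. Your closing remark correctly identifying which hypotheses are not used here (uniqueness of $\Xsol$) also matches the paper's framing of this as a deliberately weaker companion to Theorem~\ref{thm: ALmin_primal_simple_qg}.
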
 
\begin{rem}
    As a byproduct of the proof of Lemma \ref{thm: primal_low_rank}, even in the absence of strict complementarity, we establish the following inequalities for any optimal solution pair $(\Xyp,\zyp)$ to \eqref{eq: ALmin} and \eqref{eq: ALminD}:
    \begin{align}
       &\rank(Z(\zyp))\geq \rank(Z(\ysol)) \label{eq: Zyprank_dualrank}\\
       &\rank(\Xyp) \leq n - \rank(Z(\ysol))\ . \label{eq: Xyprank_dualrank}
    \end{align}
\end{rem}
\begin{proof}
From Proposition \ref{prop: existence_ALmin_P_D_sd}, we know \eqref{eq: ALmin} and \eqref{eq: ALminD} have optimal primal solutions and a unique dual solution, respectively.
Using \eqref{eq: zypysol} in Lemma \ref{lem: zypyysol} in the following step $(a)$, we have that
\begin{equation}\label{eq: Zmin_nonzero}
\fronorm{Z(\zyp) - Z(\ysol)} \leq \opnorm{\Amap}\twonorm{\zyp - \ysol} \overset{(a)}{\leq} \opnorm{\Amap} \twonorm{y - \ysol}\leq \frac{1}{3}\sigma_{\min >0} (Z(\ysol))\ .
\end{equation}
Weyl's inequality thus implies \eqref{eq: Zyprank_dualrank}. Furthermore, due to \eqref{eq: DSlater's}, we can apply Theorem \ref{thm: ALminKKTExistSol} and conclude that complementarity holds for $(\Xyp, \zyp)$:
    \begin{align}
    &\Xyp Z(\zyp) =0\ , \label{eq: ALMSD}  
    \end{align}
From \eqref{eq: Zyprank_dualrank}, the inequality \eqref{eq: Xyprank_dualrank} holds due to \eqref{eq: ALMSD}. Moreover,
the inequality \eqref{eq: Xyprank_prank} follows immediately by considering \eqref{eq: Xyprank_dualrank} and the strict complementarity of $(\Xsol, \ysol)$.
\end{proof}

\section{Lemmas for main results}\label{sec:lem}

	\begin{lem}\label{lem: SpecW}
    Suppose $Y\in\symMat{\dm}$ with eigenvalues
		$\lambda_{1}(Y)\geq\dots\geq\lambda_{\dm}(Y)$, and $\lambda_{\dm-r}(Y)-\lambda_{\dm-r+1}(Y)\geq\delta$.
Let  $V_{Y,r}\in\real^{\dm\times r}$
		formed by the last $r$  orthonormal eigenvectors $v_{\dm-r+1},\dots v_{\dm}$ of $Y$.
Define $\face_r(Y)=\left\{ V_{Y,r}SV_{Y,r}^{\top}\mid S\succeq0,\tr(S)=1\right\} .$
		Then for any $X\in\symMat{\dm}$ with $\tr(X)=1,X\succeq0$, there
		is some $W\in\face_r(Y)$ such that 
		\[
		\inprd{X-W}Y\geq\frac{\delta}{2}\fronorm{X-W}^{2}.
		\]
	\end{lem}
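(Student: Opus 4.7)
I would construct $W$ explicitly by preserving the portion of $X$ that already lies in $\range(V_{Y,r})$ and uniformly redistributing the complementary trace mass inside $\face_r(Y)$. Let $U\in\real^{n\times(n-r)}$ collect orthonormal eigenvectors of $Y$ for the top eigenvalues $\lambda_1(Y),\dots,\lambda_{n-r}(Y)$, so that $[U,V_{Y,r}]$ is orthogonal and $Y = UD_1U^\top + V_{Y,r}D_2V_{Y,r}^\top$ with $D_1 \succeq \lambda_{n-r}(Y)I$ and $D_2 \preceq \lambda_{n-r+1}(Y)I$. Write $X$ in this basis as a PSD block matrix with diagonal blocks $A=U^\top X U$, $C=V_{Y,r}^\top X V_{Y,r}$ and off-diagonal block $B=U^\top X V_{Y,r}$, and set $\alpha=\tr(A)$, $\beta=\tr(C)=1-\alpha$. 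The candidate is $W:=V_{Y,r}(C+(\alpha/r)I_r)V_{Y,r}^\top$, which lies in $\face_r(Y)$ since $C+(\alpha/r)I_r\succeq 0$ and $\tr(C+(\alpha/r)I_r)=\beta+\alpha=1$.

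With this choice, both sides of the desired inequality reduce to short block-matrix calculations. For the inner product, the off-diagonal $B$ drops out because $Y$ is block-diagonal in this basis, leaving
\[
\inprd{X-W}{Y} \,=\, \tr(AD_1) - (\alpha/r)\tr(D_2) \,\geq\, \lambda_{n-r}(Y)\,\alpha - \lambda_{n-r+1}(Y)\,\alpha \,\geq\, \alpha\delta,
\]
via $D_1\succeq \lambda_{n-r}(Y)I$ applied to the PSD matrix $A$ together with $\tr(D_2)\leq r\lambda_{n-r+1}(Y)$. For the Frobenius norm, $X-W$ has blocks $A$, $B$, $-(\alpha/r)I_r$, and the standard PSD inequalities $\fronorm{A}^2\leq (\tr A)^2=\alpha^2$ and $\fronorm{B}^2\leq \alpha\beta$ (the latter from a factorization $X=QQ^\top$ combined with $\tr(MN)\leq \opnorm{M}\tr(N)$ for PSD $M,N$) give
\[
\fronorm{X-W}^2 \,=\, \fronorm{A}^2 + 2\fronorm{B}^2 + \alpha^2/r \,\leq\, \alpha^2(1+1/r)+2\alpha\beta \,\leq\, 2\alpha^2+2\alpha\beta \,=\, 2\alpha,
\]
using $r\geq 1$ and $\alpha+\beta=1$. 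Combining the two estimates yields the claim $\inprd{X-W}{Y}\geq \alpha\delta = (\delta/2)(2\alpha)\geq (\delta/2)\fronorm{X-W}^2$; the degenerate case $\alpha=0$ is immediate, since then $A=0$ forces $B=0$ by the PSD block structure, so $X\in\face_r(Y)$ and one takes $W=X$.

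The only real subtlety is guessing the correction $(\alpha/r)I_r$. Concentrating the redistributed mass on a single eigenvector of $Y$ (say the one corresponding to $\lambda_n(Y)$) would sharpen the inner-product bound but inflate the Frobenius bound by a factor of $r$; the uniform choice is precisely what makes both quantities scale linearly in $\alpha$ and produces exactly the prescribed constant $\delta/2$ without waste. The spectral gap hypothesis enters the argument only through the single step $\lambda_{n-r}(Y)-\lambda_{n-r+1}(Y)\geq \delta$, and everything else reduces to routine PSD and trace manipulations.
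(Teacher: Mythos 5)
The paper states Lemma~\ref{lem: SpecW} but supplies no proof of it, so there is nothing to compare your argument against; I can only evaluate it on its own. Your proof is correct. Writing $X$ in the eigenbasis $[U,V_{Y,r}]$ of $Y$ with blocks $A,B,C$ and then defining $W$ by retaining $C$ and spreading the residual trace mass $\alpha=\tr(A)$ uniformly over the $V_{Y,r}$-subspace is a legitimate explicit construction, and all three estimates used are standard: $\tr(AD_1)\geq\lambda_{n-r}(Y)\,\alpha$ and $\tr(D_2)\leq r\lambda_{n-r+1}(Y)$ for the inner product, and $\fronorm{A}^2\leq\alpha^2$, $\fronorm{B}^2\leq\alpha\beta$ (from $\tr(MN)\leq\tr(M)\tr(N)$ for PSD $M,N$), together with $r\geq 1$, for the Frobenius bound $\fronorm{X-W}^2\leq 2\alpha$. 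Combining gives $\inprd{X-W}{Y}\geq\alpha\delta=(\delta/2)\cdot 2\alpha\geq(\delta/2)\fronorm{X-W}^2$, exactly as claimed. One small refinement you could add for completeness: the chain of inner-product inequalities actually shows $\inprd{X-W}{Y}\geq\alpha\bigl(\lambda_{n-r}(Y)-\lambda_{n-r+1}(Y)\bigr)\geq 0$, so the claimed bound holds trivially even if the supplied $\delta$ happens to be nonpositive, and the degenerate case $\alpha=0$ falls out automatically from your construction (which then gives $W=X$) rather than needing a separate remark. Your observation about why a uniform spread over all $r$ directions beats concentrating the mass on the bottom eigenvector is an accurate account of the tradeoff and explains where the constant $\delta/2$ comes from.
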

	
	\begin{rem}
		Note that as long as $\range(V)=\range(V_{Y,r})$ for some matrix
		$V\in\real^{\dm\times r}$ with orthonormal columns, the two sets $\face _r(Y)$
		and  $\left\{ VSV^{\top}\mid S\succeq0,\tr(S)=1\right\} $ are the same.
	\end{rem}

\begin{lem}
		\label{lem: kkt_like.qg} 
        Suppose the following system admits
		a unique solution $\bar{X} \in \symMat{\dm}$ with rank $\bar{r}:$ 
		\begin{equation}
		\inprd{\bar{Z}}{X}=0,\quad \Amap X=b,\quad\text{and\ensuremath{\quad X\succeq0},}\label{eq: linearmatrixsystem}
		\end{equation}
		for a $\bar{Z}\succeq0$ such that $\rank(\bar{Z})+\rank(\bar{X})=\dm$,
		a linear map $\Amap:\symMat{\dm}\rightarrow\real^{m}$, and a
		vector $b\in\real^{m}$. Let $V\in\mathbb{R}^{n\times\bar{r}}$ be a matrix with orthonormal
		columns represent the eigenspace of $\bar{X}$
		for positive eigenvalues.
		Then $\sigma_{\min}(\Amap_V)>0$ and for any $X\succeq0$, we have 
		\begin{equation}\label{eq: X-xsolZXAX-Axsol}
		\begin{aligned}
		\fronorm{X-\bar{X}}^{2}\;\leq\tr(X)\left(4+8\frac{\sigma_{\max}(\Amap)}{\sigma_{\min}(\Amap_{V})}\right)\frac{\inprd{\bar{Z}}X}{\lambda_{n-\bar{r}}(\bar{Z})}\\
		+\frac{4}{\sigma_{\min}^{2}(\Amap_{V})}\twonorm{\Amap(X)-b}^{2}.
		\end{aligned}
		\end{equation}
\end{lem}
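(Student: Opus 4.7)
The plan is to first derive $\sigma_{\min}(\Amap_V)>0$ from the uniqueness of $\bar X$ together with the strict-complementarity condition $\rank(\bar Z)+\rank(\bar X)=n$, and then bound $\fronorm{X-\bar X}^{2}$ by splitting $X$ into a piece supported on $\range(V)$ (close to $\bar X$) and a transverse piece (suppressed by $\bar Z$).

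For the injectivity of $\Amap_V$: the first equation of \eqref{eq: linearmatrixsystem} together with $\bar X,\bar Z\succeq 0$ yields $\bar Z\bar X=0$, whence $\range(\bar X)\subseteq\nullspace(\bar Z)$; combined with $\rank(\bar Z)+\rank(\bar X)=n$, this forces $\range(V)=\range(\bar X)=\nullspace(\bar Z)$ and $\bar X=V\bar X_{11}V^\top$ with $\bar X_{11}\succ 0$. If $\Amap_V(S)=0$ for some nonzero $S\in\symMat{\bar r}$, then for all sufficiently small $|t|$ the matrix $\bar X + tVSV^\top = V(\bar X_{11}+tS)V^\top$ remains PSD, still annihilates $\bar Z$ (since $\bar Z V=0$), and satisfies $\Amap(\cdot)=b$. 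This produces a one-parameter family of solutions to \eqref{eq: linearmatrixsystem}, contradicting uniqueness, so $\sigma_{\min}(\Amap_V)>0$.

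For the quadratic growth, assume $X\neq 0$ and apply Lemma \ref{lem: SpecW} with $Y=\bar Z$, $r=\bar r$, and gap $\delta=\lambda_{n-\bar r}(\bar Z)$ to the normalized matrix $X/\tr(X)$. Since $\bar Z$ has rank $n-\bar r$, its last $\bar r$ eigenvectors span $\nullspace(\bar Z)=\range(V)$, so $\face_{\bar r}(\bar Z)=\{VSV^\top:S\succeq 0,\tr(S)=1\}$; using $\inprd{VSV^\top}{\bar Z}=0$ for elements of the face and then rescaling by $\tr(X)$ yields $\tilde W = V\tilde S V^\top$ with $\tilde S\succeq 0$, $\tr(\tilde S)=\tr(X)$, and
\[
\fronorm{X-\tilde W}^{2}\;\le\;\frac{2\tr(X)\inprd{\bar Z}{X}}{\lambda_{n-\bar r}(\bar Z)}.
\]
Since $\tilde W - \bar X = V(\tilde S - \bar X_{11})V^\top$ lies in the image of $V(\cdot)V^\top$ and $\Amap(\bar X)=b$, the injectivity established above gives
\[
\fronorm{\tilde W - \bar X}\;\le\;\frac{\twonorm{\Amap(\tilde W)-b}}{\sigma_{\min}(\Amap_V)}\;\le\;\frac{\sigma_{\max}(\Amap)}{\sigma_{\min}(\Amap_V)}\fronorm{X-\tilde W}+\frac{\twonorm{\Amap X-b}}{\sigma_{\min}(\Amap_V)}.
\]

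To finish, combine via $\fronorm{X-\bar X}\le\fronorm{X-\tilde W}+\fronorm{\tilde W-\bar X}$, square, and apply $(a+b)^{2}\le 2a^{2}+2b^{2}$ once to the triangle decomposition and once to the bound on $\fronorm{\tilde W-\bar X}$; substituting the $\tr(X)\inprd{\bar Z}{X}/\lambda_{n-\bar r}(\bar Z)$ bound for $\fronorm{X-\tilde W}^{2}$ produces exactly the stated two-term form, with coefficients $\bigl(4+8(\sigma_{\max}(\Amap)/\sigma_{\min}(\Amap_V))^{2}\bigr)$ and $4/\sigma_{\min}^{2}(\Amap_V)$. The expected obstacle is that the natural chain is \emph{quadratic} in $\sigma_{\max}(\Amap)/\sigma_{\min}(\Amap_V)$, while the printed statement displays a \emph{linear} dependence; the second constant and the numerical factors $4$ and $8$ match precisely, so either the stated exponent on $\sigma_{\max}/\sigma_{\min}$ should be $2$ (a likely typographical slip), or one should instead run a parameterized Young's inequality balancing $\fronorm{X-\tilde W}$ against $\twonorm{\Amap X-b}$ and exploiting $\tr(\tilde S)=\tr(X)$ to absorb one factor of $\sigma_{\max}/\sigma_{\min}$ into the $\tr(X)$ prefactor. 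This is the step I would scrutinize most carefully.
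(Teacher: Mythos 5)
Your proof follows essentially the same route as the paper's: establish the injectivity of $\Amap_V$ by the strict-complementarity perturbation argument, apply Lemma \ref{lem: SpecW} to the normalized matrix $X/\tr(X)$ to construct a comparison point $W=\tr(X)\bar W$ supported on $\range(V)$ with $\fronorm{X-W}^2\leq 2\tr(X)\inprd{\bar Z}{X}/\lambda_{n-\bar r}(\bar Z)$, then decompose $\fronorm{X-\bar X}$ via the triangle inequality through $W$ and apply $(a+c)^2\leq 2a^2+2c^2$ twice. Your bookkeeping of the numerical constants $4$ and $8$ also matches.

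Your flag on the exponent is sharp. The paper's own derivation explicitly reaches the intermediate bound
\[
\fronorm{X-\bar X}^2\leq\left(2+4\,\tfrac{\sigma_{\max}^2(\Amap)}{\sigma_{\min}^2(\Amap_V)}\right)\fronorm{X-W}^2 + \tfrac{4}{\sigma_{\min}^2(\Amap_V)}\twonorm{\Amap X - b}^2,
\]
and the final display of the paper's proof even writes $4+8\,\sigma_{\max}^2(\Amap)/\sigma_{\min}^2(\Amap_V)$, yet the lemma statement and its downstream uses in \eqref{eq: ALmin.gamma_formula}, \eqref{eq: XXyp.kkt.qg}, Lemma \ref{lem: sdp.p.qg}, and Lemma \ref{lem: f_g_A_C_QG} all print the linear ratio. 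Since $\fronorm{VSV^\top}=\fronorm{S}$ for orthonormal $V$, one has $\sigma_{\min}(\Amap_V)\leq\sigma_{\max}(\Amap_V)\leq\sigma_{\max}(\Amap)$, so the ratio is at least $1$ and the printed linear form is a strictly stronger claim than the one the argument actually establishes. Your instinct that a parameterized Young's inequality will not rescue the linear exponent is also right: the square arises intrinsically from composing the Lipschitz constant of $\Amap$ with $1/\sigma_{\min}(\Amap_V)$ in the chain $\fronorm{W-\bar X}\leq\twonorm{\Amap W - b}/\sigma_{\min}(\Amap_V)\leq(\sigma_{\max}(\Amap)\fronorm{X-W}+\twonorm{\Amap X - b})/\sigma_{\min}(\Amap_V)$, which then must be squared. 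The stated lemma should therefore read $4+8\,\sigma_{\max}^2(\Amap)/\sigma_{\min}^2(\Amap_V)$; this is a typo that propagates but does not affect any qualitative conclusion, since the quantity is in any case a problem-dependent constant.
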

	
	\begin{proof}
		 The matrix $\bar{X}$ can be written as $\bar{X}=V\bar{S}V^{\top}$
		for some $\bar{S}\in\symMat{\bar{r}}$ such that $\bar{S}\succ0.$
		We claim that the linear map $\Amap_{V}$ defined as follows is injective:
		\begin{align*}
		\Amap_{V} & :\symMat{\bar{r}}\rightarrow \real^{m}\\
		& S\mapsto \Amap(VSV^\top).
		\end{align*}
		Suppose not, then there is some nonzero $S_{0}\in\symMat{\bar{r}}$
		such that $\Amap_{V}(S_{0})=0$. But this means that  $V(\alpha S_{0}+\bar{S})V^{\top}$ also
		satisfies the system (\ref{eq: linearmatrixsystem}) for all small
		enough $\alpha$, which contradicts to the assumption that $\bar{X}$ is a unique solution to  (\ref{eq: linearmatrixsystem}). Hence $\sigma_{\min}(\Amap_{V})=\min_{\fronorm S=1}\twonorm{\Amap_{V}(S)}>0$, and we have that for any $S\in\symMat{r}$
\begin{equation}\label{eq:firstinequalityqudraticgrowthlemma}
		\begin{aligned} 
		\fronorm{VSV^{\top}-\bar{X}}\leq&\frac{1}{\sigma_{\min}(\Amap_{V})}\twonorm{\Amap(VSV^{\top})-\Amap(\bar{X})}\\
		=&\frac{1}{\sigma_{\min}(\Amap_{V})}\twonorm{\Amap(VSV^{\top})-b}.
		\end{aligned}
		\end{equation}
		
Since $\inprd{\bar{Z}}{\bar{X}}=0$ and both matrices are PSD, we know 
\[
\range(\bar{X}) \subset \nullspace(\bar{Z}).
\]
Combining the above with $\rank(\bar{Z}) + \rank(\bar{X}) = d$, we know $\range(\bar{X}) = \nullspace(\bar{Z})$. Thus, the matrix $V$ is also a representation
		of the null space of the $\bar{Z}.$ Using Lemma \ref{lem: SpecW},
		we know for any $X\succeq 0$ and $X\not=0$, there is some $\bar{W}=VSV^{\top}\in\face_{\bar{r}}(\bar{Z})$ such that
        \begin{align}
	&\inprd{\frac{X}{\tr(X)}}{\bar{Z}}\overset{(a)}{=}\inprd{\frac{X}{\tr(X)}-\bar{W}}{\bar{Z}}\geq\frac{\lambda_{n-\bar{r}}(\bar{Z})}{2}\fronorm{\frac{X}{\tr(X)}-\bar{W}}^{2}\\
\implies &        
\tr(X)\inprd{{X}}{\bar{Z}}= \tr(X) \inprd{{X}-\bar{W}}{\bar{Z}}\geq\frac{\lambda_{n-\bar{r}}(\bar{Z})}{2}\fronorm{{X}-\tr(X)\bar{W}}^{2}\label{eq:quadraticgrowthlemmaZero}
        \end{align}
		where step $(a)$ is because $\lambda_{n-\bar{r}+1}(\bar{Z})=\dots=\lambda_{\dm}(\bar{Z})=0$.

Let $W = \tr(X)\bar{W}$. We can bound $\fronorm{X-\bar{X}}^{2}$ by 
		\begin{align}
		\fronorm{X-\bar{X}}^{2} & \overset{(a)}{\leq}2\fronorm{X-W}^{2}+2\fronorm{W-\bar{X}}^{2}\label{eq:secondinequalityqudraticgrowthlemma-revised}\\
		& \overset{(b)}{\leq}2\fronorm{X-W}^{2}+\frac{2}{\sigma_{\min}^{2}(\Amap_{V})}\twonorm{\Amap(W)-b}^{2}.\nonumber 
		\end{align}
		Here we use triangle inequality and basic inequality $(a+c)^{2}\leq2a^{2}+2c^{2}$
		for any real $a,c$ in step $(a)$. In step $(b)$, we use (\ref{eq:firstinequalityqudraticgrowthlemma}). 
		
		We can further bound the term $\twonorm{\Amap(W)-b}$ by 
		\begin{equation}\label{eq:thirdinequalityqudraticgrowthlemma-revised}
		\begin{aligned}
		\twonorm{\Amap(W)-b} &=\twonorm{\Amap(W-X)+\Amap(X)-b}\\
		&\leq\twonorm{\Amap(W-X)}+\twonorm{\Amap(X)-b}.
		\end{aligned} 
		\end{equation}
		Now combining (\ref{eq:secondinequalityqudraticgrowthlemma-revised}),
		(\ref{eq:thirdinequalityqudraticgrowthlemma-revised}) and $(a+c)^{2}\leq2a^{2}+2c^{2}$
		for any $a,c\in\real$ in the following step $(a)$, we see 
		\begin{equation}\label{eq:fourthinequalityqudraticgrowthlemma-revised} 
		\begin{aligned}
		\fronorm{X-\bar{X}}^{2} & \overset{(a)}{\leq}2\fronorm{X-W}^{2}+\frac{4\twonorm{\Amap(W-X)}^{2}}{\sigma_{\min}^{2}(\Amap_{V})}\\
		&+\frac{4}{\sigma_{\min}^{2}(\Amap_{V})}\twonorm{\Amap(X)-b}^{2}\\
		& \leq\left(2+4\frac{\sigma_{\max}^{2}(\Amap)}{\sigma_{\min}^{2}(\Amap_{V})}\right)\fronorm{X-W}^{2}\\
		&+\frac{4}{\sigma_{\min}^{2}(\Amap_{V})}\twonorm{\Amap(X)-b}^{2}.\nonumber 
		\end{aligned}
		\end{equation} 
		Finally using (\ref{eq:quadraticgrowthlemmaZero}) to bound $\fronorm{X-W}$,
		we reached the inequality we want to prove:
		\begin{equation*} 
		\begin{aligned}
		\fronorm{X-\bar{X}}^{2} &\leq \tr(X)\left(4+8\frac{\sigma_{\max}^{2}(\Amap)}{\sigma_{\min}^{2}(\Amap_{V})}\right)\frac{\inprd{\bar{Z}}X}{\lambda_{n-\bar{r}}(\bar{Z})}\\
		&+\frac{4}{\sigma_{\min}^{2}(\Amap_{V})}\twonorm{\Amap(X)-b}^{2}.
		\end{aligned} 
		\end{equation*}
	\end{proof}

A quick consequence of Lemma \ref{lem: kkt_like.qg} is the following quadratic growth lemma of \eqref{eq: sdp.p}. Note that for any $X\succeq 0$, we have $\tr(X) = \nucnorm{X}$.

\begin{lem}\label{lem: sdp.p.qg}
Instate the assumption of Theorem \ref{thm: ALmin_primal_simple_qg}, for any $B>0$, we have the following inequality for any $X$ with $\nucnorm{X}\leq B$: 
\begin{equation}\label{eq: sdp.p.X-Xstar.qg}
		\begin{aligned}	
        \fronorm{X-\Xsol}^{2}\;\leq 
        a_0 (\inprd{C}{\Xsol} -\psol) + a_1 \twonorm{\Amap X - b},
	\end{aligned}
	\end{equation}
\end{lem}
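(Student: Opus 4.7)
The plan is to reduce this quadratic growth inequality for \eqref{eq: sdp.p} to a direct application of Lemma \ref{lem: kkt_like.qg}, and then use elementary identities coming from strong duality and complementarity to convert the ``KKT-like residual'' $\inprd{Z(\ysol)}{X}$ into the suboptimality gap $|\inprd{C}{X} - \psol|$ plus the linear infeasibility $\twonorm{\Amap X - b}$. I interpret the right-hand side as $a_0 |\inprd{C}{X} - \psol| + a_1 \twonorm{\Amap X - b}$ (the displayed ``$\Xsol$'' should be ``$X$'', since $\inprd{C}{\Xsol} = \psol$ by strong duality).

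First, I would invoke primal simplicity of \eqref{eq: sdp.p}: the primal optimal solution $\Xsol$ is unique, strong duality holds, and strict complementarity $\rank(\Xsol) + \rank(Z(\ysol)) = n$ is satisfied at $(\Xsol, \ysol)$. This makes $\Xsol$ the unique solution of the linear-conic system
\begin{equation*}
\inprd{Z(\ysol)}{X} = 0, \quad \Amap X = b, \quad X \succeq 0,
\end{equation*}
exactly the setup required by Lemma \ref{lem: kkt_like.qg} with $\bar{Z} = Z(\ysol)$, $\bar{X} = \Xsol$, and $V = \Vstar$. Applying the lemma yields
\begin{equation*}
\fronorm{X - \Xsol}^2 \leq \tr(X)\Bigl(4 + 8\tfrac{\sigma_{\max}(\Amap)}{\sigma_{\min}(\Amap_{\Vstar})}\Bigr)\frac{\inprd{Z(\ysol)}{X}}{\lambda_{n-\rstar}(Z(\ysol))} + \frac{4}{\sigma_{\min}^2(\Amap_{\Vstar})}\twonorm{\Amap X - b}^2.
\end{equation*}

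Next, I convert $\inprd{Z(\ysol)}{X}$ into the desired quantities. Expanding $Z(\ysol) = C - \Amap^* \ysol$ and using $\psol = \inprd{b}{\ysol}$,
\begin{equation*}
\inprd{Z(\ysol)}{X} = \inprd{C}{X} - \inprd{\ysol}{\Amap X - b} - \psol \leq |\inprd{C}{X} - \psol| + \twonorm{\ysol}\twonorm{\Amap X - b}.
\end{equation*}
Finally, since $X \succeq 0$ gives $\tr(X) = \nucnorm{X} \leq B$, the prefactor $\tr(X)$ is bounded by $B$, and the quadratic term $\twonorm{\Amap X - b}^2$ can be linearized via the a priori bound $\twonorm{\Amap X - b} \leq \opnorm{\Amap}\fronorm{X} + \twonorm{b} \leq \opnorm{\Amap} B + \twonorm{b}$. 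Combining these steps yields the desired inequality with explicit constants
\begin{equation*}
a_0 = \frac{B(4 + 8\sigma_{\max}(\Amap)/\sigma_{\min}(\Amap_{\Vstar}))}{\lambda_{n-\rstar}(Z(\ysol))}, \quad a_1 = a_0 \twonorm{\ysol} + \frac{4(\opnorm{\Amap}B + \twonorm{b})}{\sigma_{\min}^2(\Amap_{\Vstar})}.
\end{equation*}

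There is essentially no hard step here: the lemma is a bookkeeping consequence of the KKT-like growth lemma together with strong duality. The only point that deserves attention is confirming that the hypotheses of Lemma \ref{lem: kkt_like.qg} really are met, namely that $\Xsol$ is the \emph{unique} solution of the associated linear-conic feasibility system; this is where primal simplicity (uniqueness of $\Xsol$ together with strict complementarity, which identifies $\range(\Vstar) = \nullspace(Z(\ysol)) = \range(\Xsol)$) enters crucially.
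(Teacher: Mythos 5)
Your proposal is correct and follows essentially the same route as the paper's proof: reduce to Lemma \ref{lem: kkt_like.qg} with $\bar Z = Z(\ysol)$, $\bar X = \Xsol$, expand $\inprd{Z(\ysol)}{X} = \inprd{C}{X} - \psol + \inprd{\ysol}{b-\Amap X}$ using strong duality, and linearize the quadratic infeasibility term by an a priori bound (the paper uses $\twonorm{\Amap X - b} \le \sigma_{\max}(\Amap)(B + \fronorm{\Xsol})$; you use $\opnorm{\Amap}B + \twonorm{b}$; both are fine). You are also right that the displayed $\inprd{C}{\Xsol}$ should read $\inprd{C}{X}$, and your $a_0$ correctly retains the $\lambda_{n-\rstar}(Z(\ysol))$ denominator that the paper's stated constant appears to drop.
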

where $a_0 = B\left(4+8\frac{\sigma_{\max}(\Amap)}{\sigma_{\min}(\Amap_{\Vstar})}\right)$ and 
$a_1 = a_0 \twonorm{\ysol} + \frac{4\sigma_{\max}(\Amap)(B+\fronorm{\Xsol})}{\sigma_{\min}^{2}(\Amap_{\Vstar})}$.
\begin{proof}
From Lemma \ref{lem: kkt_like.qg}, we know the following inequality holds for any $X\succeq 0$: 
	\begin{equation}\label{eq: X-Xstar_kkt.qg}
		\begin{aligned}	
        \fronorm{X-\Xsol}^{2}\;\leq\tr(X)\left(4+8\frac{\sigma_{\max}(\Amap)}{\sigma_{\min}(\Amap_{\Vstar})}\right)
        \frac{\inprd{Z(\ysol)}{X}}{\lambda_{n-\rstar}(Z(\ysol))}\\
		+\frac{4}{\sigma_{\min}^{2}(\Amap_{\Vstar})}\twonorm{\Amap(X)-b}^{2}.
	\end{aligned}
	\end{equation}
We bound $\inprd{Z(\ysol)}{X}$ as follows: 
\begin{align}\label{eq: sdp.ZX.SbpLIF}
\inprd{Z(\ysol)}{X} & = 
\inprd{C-\Amap^*(\ysol)}{X} \\
& = 
\inprd{C}{X} - \inprd{b}{\ysol} + 
\inprd{\ysol}{b - \Amap X} \\ 
& \overset{(a)}{\leq} \inprd{C}{X} -\psol + \twonorm{\ysol}\twonorm{\Amap X - b}.
\end{align}
In the step $(a)$, we use the strong duality for \eqref{eq: sdp.p} and \eqref{eq: sdp.d}. We bound $\twonorm{\Amap X-b}$ as follows: 
\begin{align}\label{eq: sdp.AXb.bd}
    \twonorm{\Amap X-b}\leq \sigma_{\max} (\Amap)\fronorm{X-\Xsol} \overset{(a)}{\leq} \sigma_{\max} (\Amap)(B+\fronorm{\Xsol}),
\end{align}
where the step $(a)$ is due to the triangle inequality and $\fronorm{X}\leq \nucnorm{X}\leq B$. 

Combining pieces \eqref{eq: X-Xstar_kkt.qg}, \eqref{eq: sdp.ZX.SbpLIF}, and \eqref{eq: sdp.AXb.bd}, we see \eqref{eq: sdp.p.X-Xstar.qg} holds.
\end{proof}

\begin{lem}\label{lem: f_g_A_C_QG}
   Consider the problem \eqref{eq: f_g_A} has $f$ of the form \eqref{eq: f_g_C_form}, i.e., 
    \[
    f(X) = g(\Amap X) + \inprd{C}{X}
    \]
    where $g:\mathbb{R}^m \rightarrow \real$ is an $\alpha_g$-strongly and $L_g$ smooth convex function, the map $\Amap: \symMat{\dm}\rightarrow \mathbb{R}^m$ is linear, and $C\in \symMat{\dm}$. Suppose there is a unique rank $\bar{r}$ primal solution $X_0$ to \eqref{eq: f_g_A} exists with $V\in \mathbb{R}^{n\times \bar{r}}$ of orthonormal columns representing the range of $X_0$. And suppose that strict complementarity holds: 
    \[
    \rank(\nabla f(X_0)) + \rank(X_0) =n.
    \]
    Then the problem \eqref{eq: f_g_A} satisfies $(\gamma,n,2B)$-QG for any $B>0$ with $\gamma$ specified in \eqref{eq: f_g_A_fXX_0_gamma} (depending on $B$).
\end{lem}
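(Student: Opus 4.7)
The strategy is to combine a one-sided lower bound coming from strong convexity of $g$ with the algebraic upper bound on $\fronorm{X-X_0}^2$ provided by Lemma \ref{lem: kkt_like.qg}. First, since $\nabla f(X_0) = C + \Amap^{*}\nabla g(\Amap X_0)$ and $\inprd{\nabla f(X_0)}{X_0} = 0$ by the KKT conditions \eqref{eq: f_g_A_kkt}, the $\alpha_g$-strong convexity of $g$ gives
\begin{equation*}
    f(X) - f(X_0) \;\geq\; \inprd{\nabla f(X_0)}{X} + \frac{\alpha_g}{2}\twonorm{\Amap X - \Amap X_0}^2 ,
\end{equation*}
for every $X \succeq 0$. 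This is the ``lower'' half: each of the two nonnegative terms on the right will be matched against the two terms appearing in the conclusion of Lemma \ref{lem: kkt_like.qg}.

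Next, I would check that Lemma \ref{lem: kkt_like.qg} applies with $\bar{Z} = \nabla f(X_0)$, $\bar{X} = X_0$, $b = \Amap X_0$, and $\bar{r} = \rank(X_0)$. Strict complementarity gives $\rank(\bar{Z}) + \rank(\bar{X}) = n$ and primal feasibility, so only uniqueness of the solution to the system \eqref{eq: linearmatrixsystem} needs verification. If $X \succeq 0$ satisfies $\inprd{\nabla f(X_0)}{X} = 0$ and $\Amap X = \Amap X_0$, then $g(\Amap X) = g(\Amap X_0)$ and, using the identity $\inprd{\nabla f(X_0)}{X_0} = 0$ together with $C = \nabla f(X_0) - \Amap^*\nabla g(\Amap X_0)$, a short computation gives $\inprd{C}{X - X_0} = 0$, hence $f(X) = f(X_0)$. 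Uniqueness of the minimizer of \eqref{eq: f_g_A} then forces $X = X_0$, so Lemma \ref{lem: kkt_like.qg} is applicable and yields
\begin{equation*}
    \fronorm{X-X_0}^2 \;\leq\; \tr(X)\Bigl(4 + 8\tfrac{\sigma_{\max}(\Amap)}{\sigma_{\min}(\Amap_V)}\Bigr)\frac{\inprd{\nabla f(X_0)}{X}}{\lambda_{n-\bar r}(\nabla f(X_0))} + \frac{4}{\sigma_{\min}^2(\Amap_V)}\twonorm{\Amap X - \Amap X_0}^2 .
\end{equation*}

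Finally, I would bound $\tr(X) = \nucnorm{X} \leq 2B$ and match coefficients with the lower bound. Setting
\begin{equation*}
    a \;=\; \frac{2B\bigl(4 + 8\sigma_{\max}(\Amap)/\sigma_{\min}(\Amap_V)\bigr)}{\lambda_{n-\bar r}(\nabla f(X_0))}, \qquad b \;=\; \frac{4}{\sigma_{\min}^2(\Amap_V)},
\end{equation*}
the choice $\gamma \;=\; \min\bigl\{\,1/a,\ \alpha_g/(2b)\,\bigr\}$ immediately gives $\gamma\fronorm{X-X_0}^2 \leq f(X) - f(X_0)$, which is the claimed quadratic growth. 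This produces an explicit formula for $\gamma$ in terms of the problem data, depending on $B$ through the factor $a$, matching \eqref{eq: f_g_A_fXX_0_gamma}.

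\textbf{Main obstacle.} The step most likely to be delicate is the uniqueness verification for the linear-conic system in the hypothesis of Lemma \ref{lem: kkt_like.qg}: one must combine the linear equality $\Amap X = \Amap X_0$, the complementarity $\inprd{\nabla f(X_0)}{X}=0$, and the PSD constraint with the uniqueness of the optimizer of \eqref{eq: f_g_A} in a way that uses both the strong convexity of $g$ (so that $g(\Amap X) = g(\Amap X_0)$) and strict complementarity. Everything else is bookkeeping: extracting the explicit $\gamma$ is just a coefficient match between the two displayed inequalities.
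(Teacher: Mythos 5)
Your proof is correct and follows essentially the same route as the paper: use $\alpha_g$-strong convexity of $g$ plus KKT to get $f(X)-f(X_0)\geq\inprd{\nabla f(X_0)}{X}+\tfrac{\alpha_g}{2}\twonorm{\Amap(X-X_0)}^2$, verify that $X_0$ is the unique solution of the conic linear system so Lemma \ref{lem: kkt_like.qg} applies with $\bar Z=\nabla f(X_0)$, and then match coefficients. Your explicit uniqueness check (showing $\inprd{C}{X-X_0}=0$ and hence $f(X)=f(X_0)$) is a correct unpacking of what the paper elides into ``Thus, we see that $X$ is optimal if and only if.'' One useful observation: your coefficient matching correctly yields $\gamma=\min\{1/a,\ \alpha_g/(2b)\}$, whereas the paper's displayed formula \eqref{eq: f_g_A_fXX_0_gamma} writes $\max$; that is a typo in the paper (compare with the $\min$ in \eqref{eq: ALmin.gamma_formula}), and your $\min$ is the right choice.
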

\begin{proof}
Let $X$ be any feasible point for \eqref{eq: f_g_A}, we have the following derivation for $f(X)  - f(X_0)$:
\begin{equation}
\begin{aligned}\label{eq: f_g_A_fXX_0}
    f(X) - f(X_0) 
    & = g(\Amap X) - g(\Amap X_0) + \inprd{C}{X-X_0} \\
    &  \overset{(a)}{\geq} \inprd{\Amap^* \nabla g (\Amap X_0)}{X-X_0} + \frac{\alpha_g}{2} \fronorm{\Amap X-X_0}^2 + \inprd{C}{X-X_0}\\
    & \overset{(b)}{=} \inprd{\nabla f(X_0)}{X-X_0} + 
    \frac{\alpha_g}{2} \fronorm{\Amap X-X_0}^2\\
    & \overset{(c)}{=}\inprd{\nabla f(X_0)}{X} + \frac{\alpha_g}{2} \fronorm{\Amap X-X_0}^2\\
    & \overset{(d)}{\geq} 0.
\end{aligned}
\end{equation}
In the step $(a)$, we use the assumption that $g$ is $\alpha_g$ strongly convex. In the step $(b)$, we use $\nabla f(X_0) = \Amap^* \nabla g(\Amap X_0) +C $ by the chain rule. In the step $(c)$, we use the KKT condition \eqref{eq: f_g_A_kkt}, which holds thanks to the assumption that $X_0$ exists and that the primal Slater's condition is satisfied for \eqref{eq: f_g_A} (as $I\succ 0$). The last step is because  $\inprd{\nabla f(X_0)}{X}\geq 0$ as $\nabla f(X_0)\succeq 0$ due to \eqref{eq: f_g_A_kkt}. 
Thus, we see that $X$ is optimal if and only if the following condition holds: 
\begin{equation}
    \inprd{\nabla f(X_0)}{X}=0, \quad \Amap X = \Amap X_0, \quad X\succeq 0.
\end{equation}
Our uniqueness assumption on $X_0$ shows the above system has a unique solution. Due to strict complementarity, we can apply Lemma \ref{lem: kkt_like.qg} to the above system and conclude that 
\begin{equation}\label{eq: f_g_A_XX_0}
    \fronorm{X-X_0}^2 \leq \tr(X) \left(
    4 +8 \frac{\sigma_{\max}(\Amap)}{\sigma_{\min} (\Amap_V)}
    \right) \frac{\inprd{\nabla f(X_0)}{X}}{\lambda_{n-\bar{r}}(\nabla f(X_0))}
    + \frac{4}{\sigma_{\min}^2 (\Amap_V)} \twonorm{\Amap(X -X_0)}^2.
\end{equation}
Combining \eqref{eq: f_g_A_fXX_0} and \eqref{eq: f_g_A_XX_0}, we have that for any $X\succeq 0$ with $\nucnorm{X}\leq B$, 
\begin{equation}
    f(X)-f(X_0)\geq \gamma \fronorm{X-X_0},
\end{equation}
where $\gamma$ is of the following form:
\begin{equation}\label{eq: f_g_A_fXX_0_gamma}
    \gamma =\max\left\{ \frac{\lambda_{n-\bar{r}}(\nabla f(X_0))}{
    B\left(
    4 +8 \frac{\sigma_{\max}(\Amap)}{\sigma_{\min} (\Amap_V)}
    \right)}, 
    \frac{\alpha_g\sigma_{\min}^2 (\Amap_V)} {8}
    \right\}. 
\end{equation}
\end{proof}

\section{Examples of failure of primal simplicity}
This example aims to show that if the primal solution is not unique, even if strict complementarity holds, and the $y$ is close to a strictly complementary dual optimal solution. The quadratic growth condition diminishes as $y$ approaches the strictly complementary dual optimal solution. 

Consider the SDP \eqref{eq: sdp.p} with the following problem data: 
\begin{equation}
C = 0,\quad 
    A_1 = \begin{bmatrix}
        1 & 0 & 0\\
        0 & 1 & 0 \\
        0 & 0 & 0
    \end{bmatrix},\quad A_2 = \begin{bmatrix}
        0 & 0 & 0 \\
        0 & 0 & 0 \\
        0 & 0 & 1
    \end{bmatrix},\quad A_3 = \begin{bmatrix}
        0 & 0 & 1 \\
        0 & 0 & 0 \\
        1 & 0 & 0
\end{bmatrix},\quad \text{and}\quad b = \begin{bmatrix}
        1 \\ 0 \\0
    \end{bmatrix}.
\end{equation}
Note that $X_\star = \diag(\frac{1}{2},\frac{1}{2},0)$ and $y_\star = [0,1,0]^\top$ form an optimal primal-dual pair. And they satisfy strict complementarity. The primal optimal solution set is given by 
\[
\mathcal{X} = \{X\in \symMat{3}_+\mid X = \diag(X_1,0), \;X_1\in \symMat{2}_+, \; \tr(X_1)=1\}
\]

Let us fix $\rho=1$ in \eqref{eq: AL}. For any $\epsilon\in(0,1)$, consider $y = [-\epsilon, -1,-\delta]$ in \eqref{eq: ALmin} with $0<\delta^2 \leq \epsilon$ to be chosen so that $Z(y) \succeq 0$. Let $X = X_{y,1}$ be any primal optimal solution to \eqref{eq: ALmin}. 
The dual optimal slack matrix $Z(z_{y,1})\succeq 0$ defined in KKT condition \eqref{eq: ALmin.KKT} for \eqref{eq: ALmin} is given by 
\begin{equation}\label{eq: Zyrho_nonunique_primal_solution}
Z(z_{y,1})=\begin{bmatrix}
\epsilon + ( X_{11}+X_{22} -1)  & 0 & X_{13}+\delta \\ 
0 & \epsilon+ (X_{11}+X_{22} -1) & 0 \\
\delta+  X_{13} &  0 & 1 +  X_{33}
\end{bmatrix}\succeq 0
\end{equation}

We next make our choices of the entries of $X_{y,1}$ and $\delta$. Our choices shall make $Z(z_{y,1})$ and $X_{y,1}$ indeed dual and primal optimal, which can be verified using the KKT conditions of the augmented Lagrangian, i.e., \eqref{eq: ALmin.KKT}. We shall omit the verification details. We make the following choices of $X=X_{y,1}$ in terms of its $X_{11}$, $X_{13}$, and $X_{33}$:
\begin{equation}\label{eq: nonunique_X_y_1_choice}
X_{11} = 1 - (1-\eta)\epsilon, \quad 
X_{33} = \frac{\sqrt{1+4\eta\epsilon(1-(1-\eta)\epsilon)}-1}{2}, \quad \text{and}\quad X_{13} = \sqrt{X_{11}X_{33}}.
\end{equation}
The small constant $\eta>0$ will be determined later. We set the remaining entries of $X_{y,1}$ to be $0$. We make the following choice of $\delta$:
\begin{equation}
    \delta = 
    - \left(\sqrt{\eta\epsilon(1+X_{33})} + X_{13}\right)
 =   - \left(\sqrt{\eta\epsilon(1+X_{33})} + \sqrt{X_{33}X_{11}}\right).
\end{equation}
By picking a number $\eta$ small enough and requiring $\epsilon \in (0,c)$ for some small numerical constant $c>0$, we see that 
\begin{equation}\label{eq: X_11_33_delta_range}
X_{11} \in (0.992,1.005),\quad  X_{33} \in (0.991\eta \epsilon,\eta\epsilon), \quad
    \delta \in (-2.01 \sqrt{\eta \epsilon}, -1.99 \sqrt{\eta \epsilon}), \quad \text{and}\quad 0<\delta^2 \leq \epsilon.
\end{equation}
Our choices are complete. The above choices make sure $X_{y,1}$ and $Z(z_{y,1})$ are primal and dual optimal to \eqref{eq: ALmin}. We remark that (i) $Z(z_{y,1})_{11} = Z(z_{y,1})_{22}=\eta \epsilon>0$, (ii) $Z(z_{y,1})$ has rank $2$, and (iii) $X_{y,1}$ has rank $1$. Note that (ii) implies any primal optimal solution $\tilde{X}$ to \eqref{eq: ALmin} has rank no more than $1$ by complementarity \eqref{eq: ALmin.KKT.cs}. Hence $\tilde{X} = \tilde{c}X_{y,1}$ for some constant $\tilde{c}\geq 0$. Since $\Amap(\tilde{X}) =\Amap(X_{y,1})$  by Lemma \ref{lem: AL_same_image} and $\Amap(X_{y,1})\not=0$, we see that  $\tilde{c}=1$ and $X_{y,1}$ given above is the unique primal optimal solution to \eqref{eq: ALmin}.

Now consider the following $X_{\xi}$ for any $\xi>0$: 
\begin{equation} 
X_\xi = \begin{bmatrix}
    X_{11} & \sqrt{X_{11}}\xi & X_{13} \\ 
    \xi \sqrt{X_{11}} & \xi^2 & \xi \sqrt{X_{33}} \\ 
    X_{13} & \sqrt{X_{33}}\xi & X_{33}
\end{bmatrix}. 
\end{equation}
It is straightforward to verify that $ X_\xi \succeq 0$.
The objective difference in terms of augmented Lagrangian \eqref{eq: AL} is the following:
\begin{equation}\label{eq: nonunique_L_diff}
    \mathcal{L}_{1}(X_\xi,y) - 
    \mathcal{L}_{1}(X_{y,1},y) 
    \overset{(a)}{=}
    \inprd{Z(z_{y,1})}{X_\xi - X_{y,1}} + \frac{1}{2}\twonorm{\Amap(X_{\xi} - X_{y,1}}^2 = 
    \xi^2 \eta \epsilon + 
    \frac{1}{2} \xi^4.
\end{equation}
where the step $(a)$ is from \eqref{eq: Al_diff}. The square of the distance of $X_\xi$ to $X_{y,1}$ is the following:
\begin{equation}\label{eq: nonunique_distance}
\fronorm{X_\xi - X_{y,1}}^2 = \xi^4 + 2\xi^2 (X_{33} +X_{11}).
\end{equation}
By comparing \eqref{eq: nonunique_L_diff} and \eqref{eq: nonunique_distance} and using  $X_{33} +X_{11} \in (0.99,1.01)$ from \eqref{eq: nonunique_X_y_1_choice} and the smallness of $\epsilon$, we see the quadratic growth constant is no more than $\eta \epsilon$, which is diminishing as $\epsilon$ goes to $0$.  

For general $\rho$, we simply set $y = \rho[-\epsilon,-1,-\delta]^\top$ with the previous choice of $\delta>0$ and $X_{y,\rho}= X_{y,1}$. The quadratic growth constant is then $\rho \eta\epsilon/3$,  which is also diminishing as $\epsilon$ goes to $0$.  

\end{document}